\documentclass[a4paper,12pt]{article}

\usepackage{amsmath}
\usepackage{amsfonts}
\usepackage{amssymb}
\usepackage{amstext    }
\usepackage{amsthm    }

\usepackage{mathrsfs}

\usepackage{hyperref}

\usepackage{color}

\usepackage[T1]{fontenc}
\usepackage{graphicx}

\usepackage{epsf}
\usepackage{psfrag}
\usepackage{subfigure}

\usepackage{fancyhdr}

\newtheorem{theorem}{Theorem}[section]
\newtheorem{lemma}[theorem]{Lemma}

\newtheorem{proposition}[theorem]{Proposition}
\newtheorem{remark}[theorem]{Remark}
\newtheorem{corollary}[theorem]{Corollary}
\newtheorem{conjecture}[theorem]{Conjecture}
\newcommand{\cali}[1]{\mathscr{#1}}

\numberwithin{equation}{section}

\newcommand{\dist}{{\rm dist}}

\title{Equidistribution speed towards the Green current for endomorphisms of $\mathbb P^k$}
\author{Johan Taflin}

\begin{document}

\maketitle

\begin{abstract}
Let $f$ be a non-invertible holomorphic endomorphism of $\mathbb P^k.$ For a hypersurface $H$ of $\mathbb P^k,$ generic in the Zariski sense, we give an explicit speed of convergence of $f^{-n}(H)$ towards the dynamical Green $(1,1)$-current of $f.$
\end{abstract}
\noindent{\bf Key words:} Green current, equidistribution speed.\\
\noindent{\bf AMS 2000 subject classification:} 37F, 32H.
\section{Introduction}\label{sec intro}
Let $f$ be a holomorphic endomorphism of algebraic degree $d\geq 2$ on the complex projective space $\mathbb P^k.$ The iterates $f^n=f\circ\cdots\circ f$ define a dynamical system on $\mathbb P^k.$ It is well-know that, if $\omega$ denotes the normalized Fubini-Study form on $\mathbb P^k$ then, the sequence $d^{-n}(f^n)^*(\omega)$ converges to a positive closed current $T$ of bidegree $(1,1)$ called the Green current of $f$ (see e.g. \cite{ds-lec}). It is a totally invariant current, whose support is the Julia set of $f$ and that exhibits interesting dynamical properties. In particular, for a generic hypersurface $H$ of degree $s,$ the sequence $d^{-n}(f^n)^*[H]$ converges to $sT$ \cite{ds-equi}. Here, $[H]$ denotes the current of integration on $H$ and the convergence is in the sense of currents. In fact, if we denote by $T^p$ the self-intersection $T\wedge\cdots\wedge T,$ Dinh and Sibony proposed the following conjecture on equidistribution.
\begin{conjecture}\label{conj equi}
Let $f$ be a holomorphic endomorphism of $\mathbb P^k$ of algebraic degree $d\geq 2$ and $T$ its Green current. If $H$ is an analytic set of pure codimension $p$ and of degree $s$ which is generic in the Zariski sense, then the sequence $d^{-pn}(f^n)^*[H]$ converges to $sT^p$ exponentially fast.
\end{conjecture}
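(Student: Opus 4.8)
I shall focus on the case $p=1$ of a hypersurface, which is the one treated here; the case $p\ge 2$ is commented on at the end. Write $[H]=s\omega+dd^c(sv_H)$ and $T=\omega+dd^c g$, where $\omega$ is the Fubini--Study form, $g$ is the bounded, Hölder continuous Green function of $f$, and $v_H\le 0$ is the quasi-potential of $[H]/s$ normalized by $\sup v_H=0$, with logarithmic poles along $H$. Since $f^*T=dT$ one has $(f^n)^*T=d^nT$, hence
\begin{equation*}
 d^{-n}(f^n)^*[H]-sT=d^{-n}(f^n)^*\bigl([H]-sT\bigr)=dd^c\bigl(s\,d^{-n}(v_H-g)\circ f^n\bigr),
\end{equation*}
so that $\bigl|\langle d^{-n}(f^n)^*[H]-sT,\Phi\rangle\bigr|\le s\,d^{-n}\bigl\|(v_H-g)\circ f^n\bigr\|_{L^1}\,\|dd^c\Phi\|_\infty$ for every $\cali{C}^2$ test form $\Phi$ of bidegree $(k-1,k-1)$. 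The contribution of $g\circ f^n$ is bounded by $\|g\|_\infty$, hence $O(d^{-n})$ and already exponentially small. Everything therefore reduces to controlling, with an explicit rate,
\begin{equation*}
 d^{-n}\,\bigl\|v_H\circ f^n\bigr\|_{L^1}=d^{-n}\int_{\mathbb P^k}|v_H|\;d\bigl((f^n)_*\omega^k\bigr).
\end{equation*}

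Since $v_H$ is bounded away from $H$ and comparable to $\log\dist(\cdot,H)$ near $H$, a layer-cake decomposition shows that this last integral is governed by the masses $\omega^k\bigl(\{z:\dist(f^n(z),H)<\delta\}\bigr)$, i.e. by the $\omega^k$-volume of the $\delta$-tubes around $f^{-n}(H)$. The crux is to prove, for $H$ generic in the Zariski sense (outside a countable union of proper subvarieties of the linear system of degree-$s$ hypersurfaces), an effective non-concentration estimate of the form
\begin{equation*}
 \omega^k\bigl(\{z:\dist(f^n(z),H)<\delta\}\bigr)\le C\,n^{a}\,d^{(1-\varepsilon)n}\,\delta^{b}
\end{equation*}
for suitable constants $a\ge0$, $\varepsilon>0$, $b>0$ independent of $n$ and $\delta$. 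Integrating against $|\log\delta|$ then yields $\int_{\mathbb P^k}|v_H|\,d((f^n)_*\omega^k)=o(d^n)$ with an explicit rate, and hence $\bigl|\langle d^{-n}(f^n)^*[H]-sT,\Phi\rangle\bigr|\le C\lambda^n\|\Phi\|_{\cali{C}^2}$ for an explicit $\lambda\in(0,1)$ depending only on $d$ and on the exponents above.

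The main obstacle is exactly this non-concentration estimate. Soft pluripotential theory — compactness of the $\omega$-psh functions with $\sup=0$, or the elementary bound $\|\Gamma_n\|_\infty\le Cd^n$ for the potentials $\Gamma_n$ of $(f^n)^*\omega-d^n\omega$ — only gives that $d^{-n}\int|v_H|\,d((f^n)_*\omega^k)$ is \emph{bounded}, i.e. convergence with no speed, and such arguments never see $H$. For a totally invariant or exceptional hypersurface the $\delta$-tube around $f^{-n}(H)$ is essentially the $\delta^{1/d^n}$-tube around $H$, of mass $\asymp\delta^{2/d^n}$, so the estimate — and the conjecture — fail; genericity of $H$ must therefore be used in a genuinely quantitative way, controlling how fast and how densely a finite orbit segment of $f$ can approach $H$. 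This forces $H$ to avoid not only the finitely many totally invariant components but, more delicately, to meet the postcritical set transversally enough that the multiplicities and Lelong numbers of $f^{-n}(H)$ stay under control. A clean way to package this is through the super-potential formalism of Dinh--Sibony: the super-potential of $T$ is Hölder continuous, the super-potential of $[H]/s$ is finite at $T$ for generic $H$ with a quantitative modulus, and the functional equation for super-potentials under $d^{-1}f^*$ propagates this modulus along the iteration, producing the geometric decay; establishing the finiteness modulus for generic $H$ is the real work and is what makes the speed \emph{explicit}.

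Finally, the full Conjecture~\ref{conj equi} for $p\ge 2$ would require, in addition, a pluripotential-theoretic treatment of the self-intersection $T^p$, the pull-back of possibly non-reduced or singular analytic sets of codimension $p$, and the regularity of the higher-codimension super-potentials of $T$; these ingredients are considerably more delicate, and the conjecture in that generality remains open.
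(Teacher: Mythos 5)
Your reduction of $|\langle d^{-n}(f^n)^*[H]-sT,\Phi\rangle|$ to an $L^1$ bound on the dynamical potential $d^{-n}(v_H-g)\circ f^n$ is correct and is exactly the reformulation the paper makes (Theorem~\ref{th la version u_n du main}). But at the point where the work begins, you stop: you state the tube-volume / non-concentration estimate
$\omega^k(\{z:\dist(f^n(z),H)<\delta\})\lesssim n^a d^{(1-\varepsilon)n}\delta^b$
as ``the crux,'' observe that soft pluripotential compactness cannot produce it, and then propose that the super-potential formalism \emph{could} package the required quantitative genericity, while conceding that ``establishing the finiteness modulus for generic $H$ is the real work.'' That unproved estimate \emph{is} the theorem; everything preceding it in your write-up is the easy formal manipulation, and nothing after it is a proof. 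The paper does not use super-potentials at all (it explicitly notes that super-potentials were only used by Dinh--Sibony for \emph{generic maps}, not general ones). What it does instead is build, over Sections~\ref{sec loja}--\ref{sec equi}, precisely the machinery you declare missing: Lojasiewicz-type inequalities compatible with blow-ups to handle singular invariant analytic sets (Lemma~\ref{le distance eclat sur ensemble ana}, Lemma~\ref{le vol de ball par eclat}); quantitative volume-contraction estimates for $f^n$ on such sets (Proposition~\ref{pro loja2}, Corollary~\ref{cor loja}); uniform exponential H\"{o}rmander estimates for \emph{non-compact} families of quasi-psh functions exploiting the H\"{o}lder regularity of the Green potential (Lemmas~\ref{le Ho1: avec valeur centrale}--\ref{le Ho3: geometrie des sous niveau}); the exceptional families $\cali A_\lambda,\cali B_\lambda$ from Dinh's multiplicity cocycle (Section~\ref{sec multi}); and finally an induction over $X\in\cali B_\lambda$ in which sublevel sets $K_n=\{u_n\le -(\lambda_3/d)^n\}$ are shown to contain balls that escape the exceptional tube $\widetilde E_{\lambda,t_n}$, yielding a contradiction (Lemmas~\ref{le Ho4: dans vois avec valeur L^p centrale}, \ref{le existence de grosses boules dans K_n}, and the end of the proof of Theorem~\ref{th la version u_n du main}). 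None of this appears in your proposal.

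A secondary but substantive issue is your formulation of genericity. You posit an exceptional set that is ``a countable union of proper subvarieties of the linear system of degree-$s$ hypersurfaces''; the paper's genericity is much sharper and structurally different: $H$ is admissible as soon as it contains no member of a \emph{finite} family $\cali A_\lambda$ of periodic analytic subsets of $\mathbb P^k$, and the precise quantitative substitute for genericity is the uniform bound $\|v_H\|_{L^1(X)}\le C$ for all $X\in\cali A_\lambda$ (Theorem~\ref{th main}). That $L^1$ control on finitely many explicit analytic sets is what makes the induction start (compactness of $\cali F_\lambda(C)$ via Lemma~\ref{le wpsh borne sur ss ens}) and what ultimately propagates the exponential rate; a vague ``countable union'' assumption gives you no handle at all. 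In short, you have correctly located the hard step but have not taken it.
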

The aim of the paper is to prove the conjecture for $p=1.$ It is a direct consequence of the following more precise result on currents. Indeed, we only have to apply the theorem to $S:=s^{-1}[H]$ for hypersurfaces $H$ which does not contain any element of $\cali A_\lambda.$
\begin{theorem}\label{th main}
 Let $f,$ $T$ be as above and let $1<\lambda<d.$ There exists a finite family $\cali A_\lambda$ of periodic irreducible analytic sets such that if $S$ is a positive closed $(1,1)$-current of mass $1,$ whose dynamical potential $u$ verifies $\|u\|_{L^1(X)}\leq C$ for all $X$ in $\cali A_\lambda,$ then the sequence $S_n:=d^{-n}(f^n)^*(S)$ converge exponentially fast to $T.$ More precisely, for every $0<\beta\leq 2$ and $\phi\in\cali C^\beta(\mathbb P^k)$ we get
\begin{equation}\label{eq main}
|\langle S_n-T,\phi\rangle|\leq A\|\phi\|_{\cali C^\beta}\left(\frac{\lambda}{d}\right)^{n\beta/2},
\end{equation}
where $A>0$ depends on the constants $C$ and $\beta$ but is independent of $S,$ $\phi$ and $n.$
\end{theorem}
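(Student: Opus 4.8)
\emph{Step 1: from currents to an $L^1$-estimate on potentials.} The plan is to reduce, by the $dd^c$-equation relating $S$ and $T$ together with a soft interpolation, to an $L^1$-bound for the iterated potential of $S$. Write $S-T=dd^cu$ for the dynamical potential $u$, normalised say by $\int u\,d\mu=0$ with $\mu=T^k$; pulling back by $f^n$ and using the total invariance $f^*T=dT$ gives $S_n-T=dd^cu_n$ with $u_n=d^{-n}(u\circ f^n)$, the normalisation being preserved since $f_*\mu=\mu$. As $\omega^{k-1}$ is closed, for $\phi\in\cali C^2(\mathbb P^k)$ one has $\langle S_n-T,\phi\rangle=\int u_n\,dd^c\phi\wedge\omega^{k-1}$, so $|\langle S_n-T,\phi\rangle|\le c\,\|\phi\|_{\cali C^2}\,\|u_n\|_{L^1(\omega^k)}$. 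Combining this with the trivial bound $|\langle S_n-T,\phi\rangle|\le 2\|\phi\|_{\cali C^0}$ and the usual regularisation argument --- write $\phi=\phi_\epsilon+(\phi-\phi_\epsilon)$ with $\|\phi-\phi_\epsilon\|_{\cali C^0}\lesssim\epsilon^{\beta}\|\phi\|_{\cali C^\beta}$ and $\|\phi_\epsilon\|_{\cali C^2}\lesssim\epsilon^{\beta-2}\|\phi\|_{\cali C^\beta}$, then optimise at $\epsilon^{2}=\|u_n\|_{L^1}$ --- reduces \eqref{eq main} to the single inequality $\|u\circ f^n\|_{L^1(\omega^k)}\le A'\lambda^n$.

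\emph{Step 2: reduction to an $\omega$-plurisubharmonic potential.} Since the Green current has bounded local potentials, $T=\omega+dd^cg$ with $g$ bounded, I would replace $u$ by $v:=u+g\in\mathrm{PSH}(\mathbb P^k,\omega)$: one has $\|u\circ f^n\|_{L^1(\omega^k)}\le\|v\circ f^n\|_{L^1(\omega^k)}+\|g\|_\infty$ while the hypothesis still gives $\|v\|_{L^1(X)}\le C'$ for $X\in\cali A_\lambda$. Writing $\int|v\circ f^n|\,\omega^k=\int|v|\,d\big((f^n)_*\omega^k\big)$ and applying the layer-cake formula (using $\sup v<\infty$), the goal becomes
\[
\int_0^\infty\omega^k\big(\{x:\ v(f^n(x))<-t\}\big)\,dt\ \le\ A''\lambda^n .
\]
It is worth noticing why this is not formal: the two facts available for free --- that $(f^n)_*\omega^k$ is a probability measure, and that $v\circ f^n$ is, modulo a bounded function, $d^n\omega$-plurisubharmonic, whence $\omega^k(v\circ f^n<-t)\lesssim e^{-\alpha t/d^n}$ by the uniform Skoda inequality --- only yield $\|u\circ f^n\|_{L^1}=O(d^n)$. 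The factor $(\lambda/d)^n$ must be gained from the dynamics: the forward orbit of an $\omega^k$-generic point cannot concentrate near the polar set $P=\{v=-\infty\}$ unless $P$ meets a periodic analytic set along which $f$ does not expand at rate $\lambda$.

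\emph{Step 3: the exceptional family, and the main obstacle.} This is the heart of the proof. One is led to take for $\cali A_\lambda$ the collection of periodic irreducible analytic sets $X$ (say $f^p(X)=X$) along which the dynamics of $f^p$ --- transversally to $X$, or on $X$ itself --- fails to expand at rate better than $\lambda$; its \emph{finiteness} is already a substantial point, and should follow from a bound on the degree and period of such an $X$ in terms of $\lambda$, which in turn rests on Briend--Duval-type expansion estimates for $f$ off its exceptional set (this is also why one needs $\lambda>1$, the family exploding as $\lambda\downarrow 1$). Given $\cali A_\lambda$, I would cover $\mathbb P^k$ by a small neighbourhood $U=\bigcup_{X\in\cali A_\lambda}U_X$ of the bad sets and its complement. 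On $\mathbb P^k\setminus U$ the map expands uniformly in the relevant sense, so the set of points whose orbit spends more than a fixed small fraction of its first $n$ iterates in $U$ has geometrically small $\omega^k$-measure, and on the remaining orbits $v\circ f^n$ is controlled by the smooth quasi-potential estimate $|\langle d^{-n}(f^n)^*\omega-T,\phi\rangle|\lesssim d^{-n}\|\phi\|_{\cali C^2}$; this contributes $O(\lambda^n)$. Inside each $U_X$ one uses classical estimates for the size of $v$ near its poles together with $\|v\|_{L^1(X)}\le C'$, and then descends by induction on $\dim X$ to the polarised map $f^p|_X$ on the (possibly singular) variety $X$, the $L^1$-hypotheses on the members of $\cali A_\lambda$ lying inside $X$ furnishing exactly what the inductive step needs; this also contributes $O(\lambda^n)$. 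Steps 1 and 2 are soft; the genuine difficulty, as I see it, is twofold --- the construction and finiteness of $\cali A_\lambda$, and the quantitative non-recurrence estimate off $\cali A_\lambda$ --- the second necessarily run as an induction down the stratification by periodic subvarieties, passing from $\mathbb P^k$ to singular $X$ with their own polarised dynamics, all the while carrying the control of the potential through each stratum.
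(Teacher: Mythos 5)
Your Steps 1 and 2 are a correct soft reduction — the interpolation (the paper cites Triebel, you regularise $\phi$; either works) and the observation that the trivial Skoda bound only gives $O(d^n)$ both match the paper's opening moves. From there on, however, the proposal diverges from the actual mechanism, and the divergences are not cosmetic.

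First, the construction and finiteness of $\cali A_\lambda$. You propose to define it via "periodic sets along which $f$ fails to expand at rate $\lambda$" and to deduce finiteness from Briend--Duval expansion off the exceptional set. The paper instead defines it through Dinh's theorem on analytic multiplicative cocycles: for an invariant $X$, the local multiplicity $\kappa_n(x)$ of $f^n|_X$ defines a sub-multiplicative cocycle, and the asymptotic $\kappa_-=\lim\kappa_{-n}^{1/n}$ is Zariski upper semicontinuous, so $E_\lambda(X)=\{\kappa_-\ge\lambda\}$ is a proper analytic invariant subset; $\cali B_\lambda$ is built by iterating this and $\cali A_\lambda$ consists of its minimal elements. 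Finiteness comes from this semicontinuity, not from a degree/period bound, and "expansion" here means control of backward multiplicities (which feed into Lojasiewicz-type volume estimates), not a uniform forward expansion.

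Second, the non-recurrence argument. Your large-deviations-flavoured sketch --- "orbits that spend more than a fixed fraction of time near $U$ have small $\omega^k$-measure, and on the rest $v\circ f^n$ is controlled by the quasi-potential estimate for $d^{-n}(f^n)^*\omega\to T$" --- would not close the argument. The estimate you invoke concerns $\omega$, not the arbitrary current $S$; it says nothing about $u_n=d^{-n}u\circ f^n$. And the Briend--Duval expansion is adapted to $\mu=T^k$, not $\omega^k$, so the measure of long excursions near $U$ is not obviously exponentially small in the sense needed. The paper does something else entirely: it runs an induction over $\cali B_\lambda$ on the statement $\|u_n\|_{L^p(X)}\lesssim(\lambda/d)^n$. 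Assuming this fails for some $n$, an exponential estimate of Hörmander type (Lemma 4.4) forces the sublevel set $K_n=\{u_n\le-(\lambda_3/d)^n\}$ to contain a ball of quantitative radius. Then, using the inductive bound on $E_\lambda(X)$, another exponential estimate near $\widetilde E_\lambda$ (Lemma 6.2) shows that the centre of this ball and the centres of its successive images cannot sit too close to $\widetilde E_\lambda$; Corollary 3.6 (a Lojasiewicz volume estimate for the restriction of $f$ to the possibly singular $X$) then says each image $f^i(K_n)$ still contains a ball of controlled radius. After $n$ steps one finds $\{u\le-\lambda_3^n\}$ contains a ball of radius $\gtrsim D^{n\delta^n}$, which contradicts the uniform $L^1$ bound on $\cali F_\lambda(C)$ because $\delta<\lambda_3$. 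This "ball-propagation" scheme is the actual engine, and it relies heavily on the Lojasiewicz machinery of Sections 2--3 and the Hörmander-type estimates of Section 4 --- none of which appears in your sketch.

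In short: the architecture you guess (exceptional family, non-concentration near it, downward induction on strata) is the right one, but the two pillars you propose to build it on (finiteness via Briend--Duval degree bounds, non-recurrence via Birkhoff-type time averages plus the $\omega$-quasi-potential estimate) are not the paper's and, as stated, do not work. The missing ideas are: Dinh's cocycle theorem for defining $\cali A_\lambda$; quantitative Lojasiewicz volume estimates for $f|_X$ on singular $X$; uniform Hörmander-type exponential estimates for psh-modulo-$T$ functions; and the deterministic ball-propagation contradiction that replaces the statistical non-recurrence you had in mind.
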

Here, the space $L^1(X)$ is with respect to the volume form $\omega^{\dim(X)}$ on $X$ and $\cali C^\beta(\mathbb P^k)$ denotes the space of $(k-1,k-1)$-forms whose coefficients are of class $\cali C^\beta,$ equipped with the norm induced by a fixed atlas. \textit{The dynamical potential} of $S$ is the unique quasi-plurisubharmonic function $u$ such that $S=dd^cu+T$ and $\max_{\mathbb P^k}u=0.$ Note that $\cali A_\lambda$ will be explicitly constructed. Theorem \ref{th main} still holds if we replace $\cali A_\lambda$ by an analytic subset, e.g. a finite set, which intersects all components of $\cali A_\lambda.$

Equidistribution problem without speed was considered in dimension $1$ by Brolin \cite{brolin} for polynomials and by Lyubich \cite{lyubich-entropy} and Freire-Lopes-Ma\~né \cite{flm} for rational maps. They proved that for every point $a$ in $\mathbb P^1,$ with maybe two exceptions, the preimages of $a$ by $f^n$ converge towards the equilibrium measure, which is the counterpart of the Green current in dimension $1.$

In higher dimension, for $p=k,$ simple convergence in Conjecture \ref{conj equi} was established by Forn\ae ss-Sibony \cite{fs-cdhd1}, Briend-Duval \cite{briend-duval}. Recently in \cite{ds-speed}, Dinh and Sibony give exponential speed of convergence, which completes Conjecture \ref{conj equi} for $p=k.$ The equidistribution of hypersurfaces was proved by Forn\ae ss and Sibony for generic maps \cite{fs-cdhd2} and by Favre and Jonsson in dimension $2$ \cite{fj-brolin}. The convergence for general endomorphisms and Zariski generic hypersurfaces was obtained by Dinh and Sibony in \cite{ds-equi}. These papers state convergence but without speed. In other codimensions, the problem is much more delicate. However, the conjecture was solved for generic maps in \cite{ds-superpot}, using the theory of super-potentials.

We partially follow the strategy developed in \cite{fs-cdhd2}, \cite{fj-brolin} and \cite{ds-equi}, which is based on pluripontential theory together with volume estimates, i.e. a lower bound to the contraction of volume by $f.$ These estimates are available outside some exceptional sets which are treated using hypothesis on the map $f$ or on the current $S.$

The exceptional set $\cali A_\lambda$ will be defined in Section \ref{sec multi}. It is in general a union of periodic analytic sets possibly singular. In our proof of Theorem \ref{th main}, it is necessary to obtain the convergence of the trace of $S_n$ to these analytic sets. So, we have to prove an analog of Theorem \ref{th main} where $\mathbb P^k$ is replaced with an invariant analytic set. The geometry of the analytic set near singularities is the source of important technical difficulties. We will collect in Section \ref{sec loja} and Section \ref{sec vol} several versions of Lojasiewicz's inequality which will allow us to work with singular analytic sets and also to obtain good estimates on the size of a ball under the action of $f^n.$ Such estimates are crucial in order to obtain the convergence outside exceptional sets.

Theorem \ref{th main} can be reformulated as an $L^1$ estimate of the dynamical potential $u_n$ of $S_n$ (see Theorem \ref{th la version u_n du main}).
The problem is equivalent to a size control of the sublevel set $K_n=\{u_n\leq-(\lambda/d)^n\}.$ Since $T$ is totally invariant, we get that $u_n=d^{-n}u\circ f^n$ and $f^n(K_n)=\{u\leq-\lambda^n\}.$ The above estimate on the size of ball can be applied provide that $K_n$ is not concentrated near the exceptional sets. The last property will be obtained using several generalizations of exponential Hörmander's estimate for plurisubharmonic functions that will be stated in Section \ref{sec ho}. A key point in our approach is that, by reducing the domain of integration, we obtain uniform exponential estimates for non-compact families of quasi-plurisubharmonic functions.

We close this introduction by setting some notations and conventions. The symbols $\lesssim$ and $\gtrsim$ mean inequalities up to constants which only depend on $f$ or on the ambient space. To desingularize an analytic subset of $\mathbb P^k,$ we always use a finite sequence of blow-ups of $\mathbb P^k.$ Unless otherwise specified, the distances that we consider are naturally induced by embedding or smooth metrics for compact manifolds. For $K>0$ and $0<\alpha\leq1,$ we say that a function $u: X\to\mathbb C$ is $(K,\alpha)$-Hölder continuous if for all $x$ and $y$ in $X,$ we have $|u(x)-u(y)|\leq K\dist(x,y)^\alpha.$ We denote by $\mathbb B$ the unit ball of $\mathbb C^k$ and for $r>0,$ by $\mathbb B_r$ the ball centered at the origin with radius $r.$ In $\mathbb P^k,$ we denote by $B(x,r)$ the ball of center $x$ and of radius $r.$ And, for $X\subset \mathbb P^k$ an analytic subset, we denote by $B_X(x,r)$ the connected component of $B(x,r)\cap  X$ which contains $x.$ We call it the ball of center $x$ and of radius $r$ in $X.$ It may have more than one irreducible component. Finally, for a subset $Z\subset X,$ we denote by $Z_{X,t}$ or simply $Z_t,$ the tubular $t$-neighborhood of $Z$ in $X,$ i.e. the union of $B_X(z,t)$ for all $z$ in $Z.$ A function on $X$ is call (strongly) holomorphic if it has locally a holomorphic extension to a neighborhood of the ambient space.

\section{Lojasiewicz's inequality and consequences}\label{sec loja}
One of the main technical difficulties of our approach is related to singularities of analytic sets that we will handle using blow-ups along smooth varieties. In this section, we study the behavior of metric properties under blow-ups. It will allow us to establish volume and exponential estimates onto singular analytic sets.

We will frequently use the following Lojasiewicz inequalities. We refer to \cite{bm-subana} for further details.
\begin{theorem}\label{th iné de Loja classique pour les fonction}
Let $U$ be an open subset of $\mathbb C^k$ and let $h,g$ be subanalytic functions in $U.$ If $h^{-1}(0)\subset g^{-1}(0),$ then for any compact subset $K$ of $U,$ there exists a constant $N\geq1$ such that, for all $z$ in $K,$ we have
$$|h(z)|\gtrsim |g(z)|^N.$$
\end{theorem}
In this paper, we only use the notion of subanalytic function in the following case. Let $U$ be an open subset of $\mathbb C^k$ and $A\subset U$ be an analytic set. Every compact of $U$ has a neighborhood $V\subset U$ such that the function $x\mapsto \dist(x,A)$ and analytic functions on $U$ are subanalytic on $V.$ Moreover, the composition or the sum of two such functions is still subanalytic on $V.$ In particular, we have the following property.
\begin{corollary}\label{cor iné de Loja classique pour ens}
Let $U$ be an open subset of $\mathbb C^k$ and let $A,$ $B$ be analytic subsets of $U.$ Then for any compact subset $K$ of $U,$ there exists a constant $N\geq1$ such that, for all $z$ in $K,$ we have
$$\dist(z,A)+\dist(z,B)\gtrsim\dist(z,A\cap B)^N.$$
\end{corollary}

We briefly recall the construction of blow-up that we will use later. If $U$ is an open subset of $\mathbb C^k$ which contains $0$, the blow-up $\widehat U$ of $U$ at $0$ is the submanifold of $U\times \mathbb P^{k-1}$ defined by the equations $z_iw_j=z_jw_i$ for $1\leq i,j\leq k$ where $(z_1,\ldots,z_k)$ are the coordinates of $\mathbb C^k$ and $[w_1:\cdots:w_k]$ are the homogeneous coordinates of $\mathbb P^{k-1}.$ The sets $w_i\neq 0$ define local charts on $\widehat U$ where the canonical projection $\pi: \widehat U\to U,$ if we set for simplicity $i=1$ and $w_1=1,$ is given by
$$\pi(z_1,w_2,\ldots,w_k)=(z_1,z_1w_2,\ldots,z_1w_k).$$
If $V\subset \mathbb C^p$ is an open subset, \textit{the blow-up} of $U\times V$ along $\{0\}\times V$ is defined by $\widehat U\times V.$ This is the local model of a blow-up.

Finally, if $X$ is a complex manifold, the blow-up $\widehat X$ of $X$ along a submanifold $Y$ is obtained by sticking copies of the above model and by using suitable atlas of $X.$ The natural projection $\pi:\widehat X\to X$ defines a biholomorphism between $\widehat X\setminus\widehat Y$ and $X\setminus Y$ where the set $\widehat Y:=\pi^{-1}(Y)$ is called the exceptional hypersurface. If $A$ is an analytic subset of $X$ not contained in $Y,$ the strict transform of $A$ is defined as the closure of $\pi^{-1}(A\setminus Y).$

We have the following elementary lemma.
\begin{lemma}\label{le distance eclat}
Let $\pi:\widehat U\times V\to U\times V$ be as above and $\widehat Y$ denote the exceptional hypersurface in $\widehat U\times V.$ Assume that $U\times V$ is bounded in $\mathbb C^k\times\mathbb C^p.$ Then for all $\widehat z, \widehat z'\in\widehat U\times V,$ we have
$$\dist(z,z')\gtrsim \dist(\widehat z,\widehat z')(\dist(\widehat z,\widehat Y)+\dist(\widehat z',\widehat Y)),$$
where $z=\pi(\widehat z)$ and $z'=\pi(\widehat z').$
\end{lemma}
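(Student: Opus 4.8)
The plan is to reduce the statement to an explicit computation in the coordinates of the local model, and then to a single elementary inequality in $\mathbb C^k$. First I would write $\pi=\pi_U\times\mathrm{id}_V$, recall that $\widehat U\subset U\times\mathbb P^{k-1}$ is cut out by the equations $\zeta_iw_j=\zeta_jw_i$, so that the $\mathbb C^k$-component of a point $(\zeta,[w])$ of $\widehat U$ is forced to be a scalar multiple of $w$, and accordingly write $\widehat z=(\lambda w,[w],v)$ and $\widehat z'=(\lambda'w',[w'],v')$ with $|w|=|w'|=1$, so that $z=\pi(\widehat z)=(\lambda w,v)$ and $z'=(\lambda'w',v')$. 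The exceptional hypersurface is $\widehat Y=\{0\}\times\mathbb P^{k-1}\times V$; projecting to the $\mathbb C^k$-factor in one direction and taking the point of $\widehat Y$ with the same $\mathbb P^{k-1}$- and $V$-components in the other gives $\dist(\widehat z,\widehat Y)\asymp|\lambda|$ and $\dist(\widehat z',\widehat Y)\asymp|\lambda'|$. Since all the metrics in play are smooth and mutually comparable, the two distances split up to constants as $\dist(\widehat z,\widehat z')\asymp|\lambda w-\lambda'w'|+\dist_{\mathbb P^{k-1}}([w],[w'])+|v-v'|$ and $\dist(z,z')\asymp|\lambda w-\lambda'w'|+|v-v'|$.

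Next I would exploit the boundedness of $U\times V$: it makes $|\lambda|$, $|\lambda'|$, $|v-v'|$, $|\lambda w-\lambda'w'|$ and $\dist_{\mathbb P^{k-1}}([w],[w'])$ all $\lesssim1$. Expanding the right-hand side of the asserted inequality then gives
\[\dist(\widehat z,\widehat z')\big(\dist(\widehat z,\widehat Y)+\dist(\widehat z',\widehat Y)\big)\lesssim|\lambda w-\lambda'w'|+|v-v'|+\dist_{\mathbb P^{k-1}}([w],[w'])\,(|\lambda|+|\lambda'|),\]
and the first two terms are $\lesssim\dist(z,z')$ for free. Hence the whole lemma comes down to the single inequality $\dist_{\mathbb P^{k-1}}([w],[w'])\,(|\lambda|+|\lambda'|)\lesssim|\lambda w-\lambda'w'|$, and this is the only genuine point of the proof: one has to see that the $\mathbb P^{k-1}$-distance, weighted by the "heights" $|\lambda|,|\lambda'|$ along the exceptional fibers, is controlled by the displacement of the base points.

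To prove it I would first replace $w'$ by the unit multiple $cw'$ realising $\inf_{|\mu|=1}|w-\mu w'|$; this leaves $[w']$, the vector $\lambda'w'$ and $|\lambda'|$ unchanged, while ensuring $\langle w,w'\rangle\in[0,1]$ and $\delta:=|w-w'|\le\sqrt2$, with $\delta\asymp\dist_{\mathbb P^{k-1}}([w],[w'])$. The reality of $\langle w,w'\rangle$ forces $w-w'$ and $w+w'$ to be Hermitian-orthogonal, with $|w-w'|^2=\delta^2$ and $|w+w'|^2=4-\delta^2\ge2$; so, setting $a=\tfrac{1}{2}(\lambda+\lambda')$ and $b=\tfrac{1}{2}(\lambda-\lambda')$ and decomposing $\lambda w-\lambda'w'=a(w-w')+b(w+w')$, Pythagoras gives $|\lambda w-\lambda'w'|^2=|a|^2\delta^2+|b|^2(4-\delta^2)\ge|a|^2\delta^2+2|b|^2$. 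Since $|\lambda|+|\lambda'|=|a+b|+|a-b|\le4\max(|a|,|b|)$ and $\delta\le\sqrt2$, separating the cases $|a|\ge\tfrac{1}{4}(|\lambda|+|\lambda'|)$ and $|b|\ge\tfrac{1}{4}(|\lambda|+|\lambda'|)$ yields in both instances $|\lambda w-\lambda'w'|\ge\tfrac{1}{4}\delta(|\lambda|+|\lambda'|)$. Combined with the previous paragraph this gives the lemma, the implied constant depending only on the fixed metrics. The argument is entirely elementary; the only step needing any thought is this orthogonal decomposition turning the $\mathbb P^{k-1}$-distance into a controllable component of $|\lambda w-\lambda'w'|$.
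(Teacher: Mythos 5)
Your proof is correct, and it takes a genuinely different route from the paper's. The paper first uses the max norm to reduce to $V=\{0\}$, then applies a unitary rotation of $\mathbb C^k$ so that $\max_i|z_i|=|z_1|$ and $\max_i|z'_i|=|z'_1|$, and finally works in the affine chart $w_1=1$, where the inequality falls out of a bare triangle-inequality estimate on each coordinate $|z_1w_i-z'_1w'_i|$ summed over $i$. You instead parametrize points of the blow-up by unit vectors, $\widehat z=(\lambda w,[w],v)$ with $|w|=1$, so that the exceptional distance is simply $|\lambda|$ and the $V$-factor needs no separate reduction; then the whole lemma collapses to the single inequality $\dist_{\mathbb P^{k-1}}([w],[w'])\,(|\lambda|+|\lambda'|)\lesssim|\lambda w-\lambda'w'|$, which you establish by a phase normalization making $\langle w,w'\rangle\in[0,1]$, the resulting Hermitian orthogonality of $w\pm w'$, and a Pythagorean decomposition of $\lambda w-\lambda'w'$ in that orthogonal frame followed by a two-case estimate on $\max(|a|,|b|)$. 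The paper's computation is a direct coordinate-chart manipulation with an explicit but slightly fiddly rotation step; yours is more geometric, isolates where the content of the lemma really lies (controlling the projective displacement weighted by height), and scales more transparently, at the cost of the orthogonality observation that makes Pythagoras available. Both arguments are elementary and of comparable length; there is no gap in yours.
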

\begin{proof}
Since $\pi$ leaves invariant the second coordinate, considering the maximum norm on $U\times V,$ the general setting is reduced to the case of blow-up of a point. Hence we can take $V=\{0\}.$ The lemma is obvious if $z$ or $z'$ is equal to $0.$ Since $\pi$ is a biholomorphism outside $\widehat Y,$ we can assume that $0<\|z\|,\|z'\|<1.$ Moreover, up to an isometry of $\mathbb C^k,$ we can assume that $\max|z_i|=|z_1|$ and $\max|z'_i|=|z'_1|.$ Indeed, we can send $z$ and $z'$ into the plane generated by the first two coordinates and then use the rotation group of this plane. Therefore, in the chart $w_1=1,$ we have $\widehat z=(z_1,w_2,\ldots,w_k),$ $\widehat z'=(z'_1, w'_2,\ldots,w_k')$ with $|w_i|,|w'_i|\leq 1.$

By triangle inequality, we have
$$|z_1w_i-z'_1w'_i|\geq |z_1w_i-z_1w'_i|-|z_1w'_i-z'_1w'_i|\geq|z_1||w_i-w'_i|-|z_1-z'_1|.$$
Hence, by symmetry in $z_1$ and $z'_1$ we get
$$2|z_1w_i-z'_1w'_i|\geq(|z_1|+|z'_1|)|w_i-w'_i|-2|z_1-z'_1|.$$
Therefore, there is a constant $a>0$ independent of $z$ and $z'$ such that
$$|z_1-z'_1|+\sum_{i=2}^k|z_1w_i-z'_1w'_i|\geq a(|z_1|+|z'_1|)(|z_1-z'_1|+\sum_{i=2}^k|w_i-w'_i|).$$
The left-hand side corresponds to $\dist(z,z').$ We also have that $\dist(\widehat z,\widehat z')\simeq|z_1-z'_1|+\sum_{i=2}^k|w_i-w'_i|$ and $\dist(\widehat z,\widehat Y)+\dist(\widehat z',\widehat Y)\simeq|z_1|+|z'_1|.$ The result follows.
\end{proof}

A similar result holds for analytic sets. More precisely, consider an irreducible analytic subset $X$ of $\mathbb P^k$ of dimension $l$ and a smooth variety $Y$ contained in $X.$ Let $\overline \pi:\widehat{\mathbb P^k}\to\mathbb P^k$ be the blow-up along $Y$ and $\pi$ the restriction of $\overline \pi$ to the strict transform $\widehat X$ of $X.$ Denote by $\overline Y$ and $\widehat Y$ the exceptional hypersurfaces in $\widehat{\mathbb P^k}$ and in $\widehat X$ respectively.
  \begin{lemma}\label{le distance eclat sur ensemble ana}
There exists $N\geq1$ such that for all $\widehat z$ and $\widehat z'$ in $\widehat X$
$$\dist(z,z')\gtrsim \dist(\widehat z,\widehat z')(\dist(\widehat z,\widehat Y)+\dist(\widehat z',\widehat Y))^N,$$
\end{lemma}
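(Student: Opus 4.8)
The plan is to obtain the estimate from the Lojasiewicz inequality (Theorem~\ref{th in� de Loja classique pour les fonction}), applied to the pair of subanalytic functions
$$F(\widehat z,\widehat z'):=\dist\bigl(\overline\pi(\widehat z),\overline\pi(\widehat z')\bigr)\quad\text{and}\quad G(\widehat z,\widehat z'):=\dist(\widehat z,\widehat z')\bigl(\dist(\widehat z,\widehat Y)+\dist(\widehat z',\widehat Y)\bigr)$$
on the compact manifold $\widehat X\times\widehat X$. Since $\pi=\overline\pi|_{\widehat X}$, one has $F(\widehat z,\widehat z')=\dist(z,z')$, so the assertion is precisely $F\gtrsim G^{N}$ for some $N\geq1$. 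Both functions are subanalytic in any local chart, since there $\overline\pi$ is analytic, the distance functions on $\mathbb P^k\times\mathbb P^k$ and on $\widehat{\mathbb P^k}$ are subanalytic, and sums, products and compositions of subanalytic functions remain subanalytic. So everything reduces to checking $F^{-1}(0)\subset G^{-1}(0)$.

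This inclusion is where the geometry of the blow-up enters, and it is elementary. Recall that $\widehat Y=\pi^{-1}(Y)=\widehat X\cap\overline Y$ with $\overline Y=\overline\pi^{-1}(Y)$, and that $\overline\pi$ is a biholomorphism from $\widehat{\mathbb P^k}\setminus\overline Y$ onto $\mathbb P^k\setminus Y$. Hence, if $\widehat z,\widehat z'\in\widehat X$ satisfy $\overline\pi(\widehat z)=\overline\pi(\widehat z')$, then either this common point lies in $\mathbb P^k\setminus Y$, and then $\widehat z=\widehat z'$ by injectivity, so $\dist(\widehat z,\widehat z')=0$; or it lies in $Y$, and then $\widehat z,\widehat z'\in\overline\pi^{-1}(Y)\cap\widehat X=\widehat Y$, so $\dist(\widehat z,\widehat Y)=\dist(\widehat z',\widehat Y)=0$. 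In both cases $G(\widehat z,\widehat z')=0$, which is the required inclusion.

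It remains to phrase this within the statement of Theorem~\ref{th in� de Loja classique pour les fonction}, which concerns open subsets of $\mathbb C^{N}$. I would cover $\widehat{\mathbb P^k}\times\widehat{\mathbb P^k}$ by finitely many charts; on each of them $\rho:=\dist(\cdot,\widehat X\times\widehat X)$ is subanalytic, one has $(F+\rho)^{-1}(0)=F^{-1}(0)\cap(\widehat X\times\widehat X)\subset G^{-1}(0)$, so the theorem gives $F+\rho\gtrsim G^{N}$ on compact subsets, hence $F\gtrsim G^{N}$ along $\widehat X\times\widehat X$, and combining the finitely many charts yields the claim. A more hands-on variant stays in a local model $\widehat U\times V$: there Lemma~\ref{le distance eclat} gives $\dist(z,z')\gtrsim\dist(\widehat z,\widehat z')\bigl(\dist(\widehat z,\overline Y)+\dist(\widehat z',\overline Y)\bigr)$, and one passes from $\overline Y$ to $\widehat Y=\widehat X\cap\overline Y$ using the Lojasiewicz bound $\dist(\widehat z,\overline Y)\gtrsim\dist(\widehat z,\widehat Y)^{N}$ for $\widehat z\in\widehat X$ (itself obtained by adding $\dist(\cdot,\widehat X)$ and applying Theorem~\ref{th in� de Loja classique pour les fonction}) together with $a^{N}+b^{N}\gtrsim(a+b)^{N}$. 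I expect the main obstacle not to be any single computation but rather the fact that, unlike in Lemma~\ref{le distance eclat}, a purely local chart-by-chart argument does not suffice — the projection $\overline\pi$ contracts distances near $\overline Y$, so two points of $\widehat X$ lying in different charts with close images need not be close — which is exactly why I would run the Lojasiewicz argument globally on $\widehat X\times\widehat X$ and pay the modest price of working on a non-open set via the $\dist(\cdot,\widehat X\times\widehat X)$ trick.
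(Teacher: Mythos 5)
Your proposal is correct, and your main argument takes a genuinely different route from the paper's. The paper first invokes Lemma~\ref{le distance eclat} to get the estimate with $\overline Y$ in place of $\widehat Y$ and exponent $N=1$ (an explicit local computation, implicitly globalized to the compact ambient space), and then passes from $\overline Y$ to $\widehat Y=\widehat X\cap\overline Y$ using Corollary~\ref{cor in� de Loja classique pour ens} with $A=\widehat X$, $B=\overline Y$, exactly as in your ``hands-on variant.'' You instead feed the two global functions $F=\dist\circ(\overline\pi\times\overline\pi)$ and $G$ directly into Theorem~\ref{th in� de Loja classique pour les fonction}, reducing everything to the elementary zero-set inclusion, and then handle the non-open domain $\widehat X\times\widehat X$ with the $\dist(\cdot,\widehat X\times\widehat X)$ trick in finitely many charts of the ambient $\widehat{\mathbb P^k}\times\widehat{\mathbb P^k}$. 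Your verification of $F^{-1}(0)\cap(\widehat X\times\widehat X)\subset G^{-1}(0)$ is exactly right: either $\overline\pi$ is injective at the common image (outside $Y$) forcing $\widehat z=\widehat z'$, or both lifts lie in $\widehat X\cap\overline Y=\widehat Y$. What your approach buys is that it bypasses Lemma~\ref{le distance eclat} entirely and sidesteps the globalization of that local-model estimate, a step the paper leaves implicit; your remark that two points in different charts with close images need not be close is a legitimate worry about that implicit step (it does resolve, but requires a moment's argument, for instance via Lojasiewicz again). What you give up is the explicit exponent $N=1$ in the intermediate $\overline Y$-estimate, which Lemma~\ref{le distance eclat} provides but which the present lemma does not need. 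One small terminological slip: $\widehat X\times\widehat X$ is compact but not a manifold in general, since $\widehat X$ is a strict transform of the possibly singular $X$; this is harmless because your argument runs in charts of the smooth $\widehat{\mathbb P^k}\times\widehat{\mathbb P^k}$ and only restricts to $\widehat X\times\widehat X$ at the end.
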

\begin{proof}
The previous lemma gives the inequality with $N=1$ if we substitute $\widehat Y$ by $\overline Y.$ By Corollary \ref{cor iné de Loja classique pour ens} applied to $A=\widehat X$ and $B=\overline Y,$ there exists $N\geq1$ such that $\dist(\widehat x,\overline Y)\gtrsim\dist(\widehat x,\widehat Y)^{N}$ for all $\widehat x$ in $\widehat X.$ The result follows.
\end{proof}
Here is the first estimate on contraction for blow-ups.
\begin{lemma}\label{le vol de ball par eclat}
There exists a constant $N\geq1$ such that for all $0<t\leq1/2,$ if $\dist(\widehat x,\widehat Y)>t$ and $r<t/2,$ then $\pi(B_{\widehat X}(\widehat x,r))$ contains $B_X(\pi(\widehat x), t^Nr).$ Moreover, if $N$ is large enough then the image by $\pi$ of a ball of radius $0<r\leq1/2$ contains a ball of radius $r^N$ in $X.$
\end{lemma}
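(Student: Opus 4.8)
The plan is to transfer the metric distortion estimate of Lemma \ref{le distance eclat sur ensemble ana} into a statement about images of balls. Let me write $\widehat{x}$ with $\dist(\widehat{x},\widehat{Y})>t$ and $r<t/2$, and set $x=\pi(\widehat{x})$. I want to show $\pi(B_{\widehat{X}}(\widehat{x},r))\supset B_X(x,t^Nr)$ for a suitable $N$.

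First I would exploit the fact that $\pi$ restricts to a biholomorphism between $\widehat{X}\setminus\widehat{Y}$ and $X\setminus Y$, so on the relevant region $\pi$ is invertible and the real content is a Lipschitz-type bound on $\pi^{-1}$. Suppose $y\in B_X(x,\rho)$ for $\rho$ to be chosen; I want to produce $\widehat{y}\in B_{\widehat{X}}(\widehat{x},r)$ with $\pi(\widehat{y})=y$. Here I need to be a little careful: $B_X(x,\rho)$ is by definition the \emph{connected component} of $B(x,\rho)\cap X$ containing $x$, and I will use that $\pi$ is proper with connected fibers over $Y$, so lifting a path in $B_X(x,\rho)$ from $x$ to $y$ (after perturbing it off $Y$, which has real codimension $\ge 2$ in $X$) produces a path in $\widehat{X}$ starting at $\widehat{x}$; its endpoint is the desired $\widehat{y}$, and the lifted path stays in $\widehat{X}$. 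Then Lemma \ref{le distance eclat sur ensemble ana} applied along this lifted path gives, for any two nearby points $\widehat{z},\widehat{z}'$ on it,
$$\dist(\widehat{z},\widehat{z}')\lesssim \frac{\dist(z,z')}{(\dist(\widehat{z},\widehat{Y})+\dist(\widehat{z}',\widehat{Y}))^N}.$$
The key observation is that as long as the lifted path stays in the region $\{\dist(\cdot,\widehat{Y})>t/2\}$, the denominator is $\gtrsim t^N$, so the length of the lifted path is $\lesssim t^{-N}$ times the length of its image, hence $\lesssim t^{-N}\rho$. A bootstrap / continuity argument shows the lifted path cannot escape $\{\dist(\cdot,\widehat{Y})>t/2\}$: it starts at distance $>t$ from $\widehat{Y}$, and to reach distance $t/2$ it would have to have length $\ge t/2$, forcing $t^{-N}\rho\gtrsim t/2$, i.e. $\rho\gtrsim t^{N+1}$, which we exclude by choosing $\rho=t^{N'}r$ with $N'$ large (using $r<t/2\le 1/2$). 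With the path trapped, we conclude $\dist(\widehat{x},\widehat{y})\lesssim t^{-N}\rho\le r$ after absorbing constants by enlarging the exponent; thus $\widehat{y}\in B_{\widehat{X}}(\widehat{x},r)$ and $y\in\pi(B_{\widehat{X}}(\widehat{x},r))$, as desired.

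For the second assertion, the ball $B_X(x,r)$ may now meet or even contain points of $Y$, where the first argument breaks down because $\dist(\widehat{x},\widehat{Y})$ can be arbitrarily small. I would instead argue directly on $\mathbb{P}^k$: since $\overline{\pi}:\widehat{\mathbb{P}^k}\to\mathbb{P}^k$ is a morphism of compact manifolds, it is globally Lipschitz, and its inverse on $\widehat{\mathbb{P}^k}\setminus\overline{Y}$ satisfies a Lojasiewicz-type bound near $\overline{Y}$: by Theorem \ref{th in� de Loja classique pour les fonction} (applied in local charts of the blow-up, where $\overline{\pi}$ and the distance functions are subanalytic) there is $N\ge 1$ with $\dist(\overline{\pi}(\widehat{z}),Y)\gtrsim\dist(\widehat{z},\overline{Y})^N$, and more generally the modulus of continuity of $\overline{\pi}^{-1}$ is controlled by a power of the distance. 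This yields a Hölder bound $\dist(\widehat{z},\widehat{z}')\lesssim\dist(\overline{\pi}(\widehat{z}),\overline{\pi}(\widehat{z}'))^{1/N}$ on all of $\widehat{\mathbb{P}^k}$, hence on $\widehat{X}$ via Corollary \ref{cor in� de Loja classique pour ens} comparing $\dist(\cdot,\widehat{Y})$ with $\dist(\cdot,\overline{Y})$. Running the same path-lifting scheme — now with no need to stay away from the exceptional set — gives $\pi(B_{\widehat{X}}(\widehat{x},r^N))\supset B_X(x,r)$ after renaming the exponent, which is exactly the claim after reindexing $N$.

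The main obstacle I anticipate is the path-lifting step over the singular locus and over $Y$: ensuring that the connected component $B_X(x,\rho)$ really does lift, with control, requires knowing that $\pi:\widehat{X}\to X$ has connected fibers over $Y\cap X$ (true because $\widehat{X}$ is the strict transform and the fiber of a blow-up of a smooth center is a projective space, so the relevant fibers of $\pi$ are connected projective subvarieties) and that paths can be homotoped off the codimension-$\ge 2$ bad set without increasing length by more than a constant. Once the geometry of these fibers is pinned down, the metric estimates are a routine consequence of Lemmas \ref{le distance eclat} and \ref{le distance eclat sur ensemble ana} together with the Lojasiewicz inequalities already established.
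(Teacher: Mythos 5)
Your first assertion is sound, though more elaborate than necessary: the paper simply notes that any $\widehat y$ with $\dist(\widehat x,\widehat y)=r$ satisfies $\dist(\widehat y,\widehat Y)>t/2$, applies Lemma~\ref{le distance eclat sur ensemble ana} to obtain $\dist(\pi(\widehat x),\pi(\widehat y))\gtrsim rt^N$, and concludes because $\pi(B_{\widehat X}(\widehat x,r))$ is open ($\pi$ being a biholomorphism off $\widehat Y$) with boundary at distance $\gtrsim rt^N$ from $\pi(\widehat x)$; no path lifting is required. (Also, since you only need $\dist(\widehat x,\widehat\gamma(s))\lesssim t^{-N}\dist(x,\gamma(s))$ pointwise, you can run your bootstrap directly with distances rather than arclengths, which conveniently avoids having to compare intrinsic and extrinsic length on the possibly singular $X$.)

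The second assertion, however, contains a genuine error. The claimed bound $\dist(\widehat z,\widehat z')\lesssim\dist(\overline{\pi}(\widehat z),\overline{\pi}(\widehat z'))^{1/N}$ on all of $\widehat{\mathbb P^k}$ is false: if $\widehat z\neq\widehat z'$ lie in the same fiber of $\overline{\pi}$ over a point of $Y$, the right-hand side vanishes while the left-hand side does not, since fibers over $Y$ are projective spaces of positive diameter. The Lojasiewicz inequality $\dist(\overline{\pi}(\widehat z),Y)\gtrsim\dist(\widehat z,\overline Y)^N$ controls how $\overline{\pi}$ contracts distances \emph{to the exceptional locus}; it does not yield a modulus of continuity for $\overline{\pi}^{-1}$, which as a set-valued map is not even continuous across $\overline Y$. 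Consequently the containment $\pi(B_{\widehat X}(\widehat x,r^N))\supset B_X(\pi(\widehat x),r)$ is also false: if $\widehat x\in\widehat Y$, the image of a small ball around $\widehat x$ is a wedge-shaped neighborhood of $\pi(\widehat x)$ that contains no ball centered at $\pi(\widehat x)$ at all. Note that the lemma asserts only that the image contains \emph{some} ball of radius $r^N$, not one centered at $\pi(\widehat x)$, precisely because of this. The paper's argument sidesteps the bad behavior near $\widehat Y$ entirely: using Lelong's comparison of volumes of balls in analytic sets, a volume count shows that the tube $\widehat Y_\rho$ with $\rho=c''r^l$ cannot cover $B$, hence $B\setminus\widehat Y$ contains a ball of radius $\rho/3$, to which one applies the first assertion.
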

\begin{proof}
Let $\widehat y$ be a point in $\widehat X$ such that $\dist(\widehat x,\widehat y)=r$ and set $x=\pi(\widehat x),$ $y=\pi(\widehat y).$ The assumption on $r$ gives that $\dist(\widehat y,\widehat Y)>t/2.$ Therefore, we deduce from Lemma \ref{le distance eclat sur ensemble ana} that
$$\dist(x,y)\gtrsim rt^N.$$
The first assertion follows since $t\leq1/2$ and $\pi$ is a biholomorphism outside $\widehat Y.$

For a general ball $B$ of radius $r$ in $\widehat X,$ we can reduce the ball in order to avoid $\widehat Y$ and then apply the first statement. More precisely, as $\dim(\widehat Y)\leq l-1$ there is a constant $c>0$ such that for all $\rho>0,$ $\widehat Y$ is cover by $c\rho^{-2(l-1)}$ balls of radius $\rho.$ On the other hand, by a theorem of Lelong \cite{lelong-book,de-book}, the volume of a ball of radius $\rho$ in $\widehat X$ varies between $c'^{-1}\rho^{2l}$ and $c'\rho^{2l}$ for some $c'>0.$  Hence, the volume of $\widehat Y_\rho$ is of order $\rho^2.$ Take $\rho=c''r^l$ with $c''>0$ small enough. By counting the volume, we see that $B$ is not contained in $\widehat Y_\rho.$ Therefore, $B\setminus \widehat Y$ contains a ball of radius $\rho/3.$ We obtain the result using the first assertion.
\end{proof}
In the same spirit, we have the following lemma.
\begin{lemma}\label{le estime voisinage de transforme totale}
Let $\widehat Z$ be a compact manifold, $Z$ be a irreducible analytic subset of $\mathbb P^k$ and $\pi:\widehat Z\to Z$ be a surjective holomorphic map. Let $A$ be an irreducible analytic subset of $Z$ and define $\widehat A:=\pi^{-1}(A).$ There exists $N\geq1$ such that $A_{t^N}$ is included in $\pi(\widehat A_t)$ for all $t>0$ small enough. Moreover, if $\widehat A$ is the union on two analytic sets $\widehat A_1,$ $\widehat A_2$ such that $A_2:=\pi(\widehat A_2)$ is strictly contained in $A$ then $\pi(\widehat A_{1,t})$ contains $A_{t^N}\setminus A_{2,t^{1/2}}.$
\end{lemma}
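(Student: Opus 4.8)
The plan is to derive both assertions from a single application of Lojasiewicz's inequality to the distance function on the compact manifold $\widehat Z$, the only real care being the connected-component convention in the definition of the tubular neighbourhoods. The starting point is the estimate: there is an integer $N_0\ge 1$ such that
\[ \dist(\pi(\widehat z),A)\gtrsim \dist(\widehat z,\widehat A)^{N_0}\quad\text{for all }\widehat z\in\widehat Z. \]
To get this, cover $\widehat Z$ by finitely many holomorphic charts; in each chart $\widehat z\mapsto\dist(\pi(\widehat z),A)$ is subanalytic, being the composition of the holomorphic map $\pi$ with the subanalytic function $\dist(\cdot,A)$ read in a chart of $\mathbb P^k$, and likewise $\widehat z\mapsto\dist(\widehat z,\widehat A)$ is subanalytic. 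Since $\widehat A=\pi^{-1}(A)$, these two functions vanish exactly on $\widehat A$, so Lojasiewicz's inequality stated at the start of this section applies on a compact neighbourhood of each chart; taking the largest exponent over the finite cover yields the global bound.

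For the first assertion, fix $N>N_0$ and take $x\in A_{t^N}$, so that $\dist_{\mathbb P^k}(x,A)<t^N$. Choosing any preimage $\widehat x\in\pi^{-1}(x)$ (using surjectivity of $\pi$), the estimate above gives $\dist(\widehat x,\widehat A)\lesssim\dist(x,A)^{1/N_0}\le t^{N/N_0}$, hence $\dist(\widehat x,\widehat A)<t$ once $t$ is small enough, because $N/N_0>1$. Let $\widehat b\in\widehat A$ realise this distance. Since $\widehat Z$ is a manifold, for $t$ below the injectivity radius the metric ball $B_{\widehat Z}(\widehat b,t)$ is connected, so $\widehat x\in B_{\widehat Z}(\widehat b,t)\subset\widehat A_t$, and therefore $x=\pi(\widehat x)\in\pi(\widehat A_t)$.

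For the second assertion, take $x\in A_{t^N}\setminus A_{2,t^{1/2}}$ and, exactly as above, a preimage $\widehat x$ with $\dist(\widehat x,\widehat A)<t$; let $\widehat b\in\widehat A$ achieve this distance. If $\widehat b\in\widehat A_1$ then $x\in\pi(B_{\widehat Z}(\widehat b,t))\subset\pi(\widehat A_{1,t})$ and we are done. If instead $\widehat b\in\widehat A_2$, join $\widehat x$ to $\widehat b$ by a path $\gamma$ in $\widehat Z$ of length $<t$ and push it forward by $\pi$, which is Lipschitz on the compact manifold $\widehat Z$; this yields a path in $Z$ from $x$ to $\pi(\widehat b)\in A_2=\pi(\widehat A_2)$ of length $\lesssim t$, which is $<t^{1/2}$ for $t$ small, so the pushed-forward path stays inside $B(\pi(\widehat b),t^{1/2})$. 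Consequently $x$ lies in the connected component of $B(\pi(\widehat b),t^{1/2})\cap Z$ containing $\pi(\widehat b)$, i.e. $x\in A_{2,t^{1/2}}$, contradicting the choice of $x$. Hence $\widehat b\in\widehat A_1$ and the inclusion follows. (The hypothesis $A_2\subsetneq A$ is used only to make the statement non-vacuous: otherwise $A_{t^N}\subset A_{2,t^{1/2}}$ for $t$ small.)

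The one genuinely delicate point is this last transfer of closeness through $\pi$: one cannot argue merely with Euclidean distances to $A_2$, since a point of $Z$ may be Euclidean-close to $A_2$ while sitting on another sheet of the (possibly singular) set $Z$; it is precisely the pushforward of a short path that keeps the nearness confined to a single component and makes the connected-component definition of $A_{2,t^{1/2}}$ usable. Everything else is bookkeeping of the exponents produced by the single use of Lojasiewicz's inequality, and the choice of $N$ large in terms of $N_0$.
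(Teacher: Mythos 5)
Your proof is correct and follows the same route as the paper: apply Lojasiewicz's inequality to the two subanalytic functions $\widehat z\mapsto\dist(\widehat z,\widehat A)$ and $\widehat z\mapsto\dist(\pi(\widehat z),A)$, which share the zero set $\pi^{-1}(A)=\widehat A$, to obtain the exponent, and then for the second assertion use that $\pi$ is Lipschitz so that $\pi(\widehat A_{2,t})\subset A_{2,ct}\subset A_{2,t^{1/2}}$ for $t$ small. Your push-forward-of-a-short-path argument is simply a careful unpacking of that last inclusion with respect to the connected-component convention for tubular neighbourhoods, a point the paper compresses into the single phrase ``since $\pi$ is holomorphic, there exists $c>0$ such that $\pi(\widehat A_{2,t})\subset A_{2,ct}$.''
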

\begin{proof}
Since $\dist(\widehat z,\widehat A)=0$ if and only if $\dist(\pi(\widehat z),A)=0,$ we can apply Theorem \ref{th iné de Loja classique pour les fonction} to these functions which implies the existence of $N\geq1$ such that
$$\dist(\widehat z,\widehat A)^{N}\lesssim\dist(\pi(\widehat z),A).$$
Therefore, since $\pi$ is surjective, $\pi(\widehat A_t)$ contains $A_{t^{N}}$ for $t>0$ small enough.

Now, let $\widehat A=\widehat A_1\cup\widehat A_2$ be as above. Since $\pi$ is holomorphic, there exists $c>0$ such that $\pi(\widehat A_{2,t})\subset A_{2,ct}.$ Therefore, $\pi(\widehat A_{1,t})$ contains $A_{t^N}\setminus A_{2,t^{1/2}}$ for $t>0$ sufficiently small.
\end{proof}

In the sequel, we will constantly use desingularization of analytic sets. The following lemma allows us to conserve integral estimates.

\begin{lemma}\label{le remonté d'estimée par surjection}
Let $Z$ and $\widehat Z$ be irreducible analytic subsets of Kähler manifolds. Let $\pi:\widehat Z\to Z$ be a surjective proper holomorphic map. Then, for every compact $\widehat L$ of $\widehat Z$ there exists $q\geq1$ such that if $v$ is in $L^q_{loc}(Z)$ then $\widehat v:=v\circ \pi$ is in $L^1(\widehat L).$ Moreover, there exists $c>0,$ depending on $\widehat L,$ such that
$$\|\widehat v\|_{L^{1}(\widehat L)}\leq c\|v\|_{L^{q}(\pi(\widehat L))}.$$
\end{lemma}
\begin{proof}
Using a desingularization, we can assume that $\widehat Z$ is a smooth Kähler manifold with a Kähler form $\widehat \omega.$ Denote by $\omega,$ $n,$ $m$ a Kähler form on $Z$ and the dimensions of $Z$ and $\widehat Z$ respectively. Generic fibers of $\pi$ are compact of dimension $m-n$ and form a continuous family. It follows that the integral of $\widehat \omega^{m-n}$ on that fibers is a constant.

Consider $\widehat \lambda=\pi^*(\omega^n)\wedge\widehat \omega^{m-n}$ on $\widehat Z.$ The last observation implies that $\pi_*(\widehat \lambda)=\omega^n$ up to a constant. Therefore, if $v$ is in $L_{loc}^q(Z)$ then $\widehat v$ is in $L_{loc}^q(\widehat Z,\widehat \lambda).$ Moreover, we can write $\widehat \lambda=h\widehat \omega^m$ where $h$ is a positive function. If there exists $\tau>0$ such that $h^{-\tau}$ is integrable on $\widehat L$ with respect to $\widehat \omega^m,$ we obtain for $p=1+\tau$ and $q$ its conjugate that
\begin{align*}
 \int_{\widehat L} |\widehat v|\widehat \omega^m=\int_{\widehat L} |\widehat v|h^{-1}\widehat \lambda&\leq \left(\int_{\widehat L}|\widehat v|^q\widehat \lambda\right)^{1/q}\left(\int_{\widehat L} h^{-p}\widehat \lambda\right)^{1/p}\\
&\lesssim \left(\int_{\pi(\widehat L)}|v|^q\omega^n\right)^{1/q}\left(\int_{\widehat L} h^{-\tau}\widehat\omega^m\right)^{1/p}\\
&\lesssim \|v\|_{L^q(\pi(\widehat L))}.
\end{align*}
It remains to show the existence of $\tau.$ The set $\{h=0\}$ is contained in the complex analytic set $A$ where the rank of $\pi$ is not maximal. More precisely, if $\pi$ has maximal rank at $z,$ then we can linearize $\pi$ in a neighborhood of $z.$ Therefore, $\widehat \lambda$ and $\widehat \omega^m$ are comparable in that neighborhood.

Since $\widehat L$ is compact, the problem is local. Let $z_0$ in $\widehat L.$ We can find a small chart $U$ at $z_0$ and a holomorphic function $\phi$ on $U$ such $A\cap U$ is contained in $\{\phi=0\}.$ We can also replace $\widehat \omega$ and $\omega$ by standard Euclidean forms on $U$ and on a neighborhood of $\pi(U).$ So, we can assume that $h$ is analytic. By Lojasiewicz inequality, for every compact $K$ of $U,$ there exists $N\geq1$ such that $h(z)\gtrsim|\phi(z)|^N$ for all $z$ in $K.$ On the other hand, exponential estimate (cf. \cite{ho-book} and Section \ref{sec ho}) applied to the plurisubharmonic function $\log|\phi|$ says that $\phi^{-\alpha}$ is in $L^1(K,\widehat \omega^m)$ for some $\alpha>0.$ Therefore, $h^{-\alpha/N}$ belongs to $L^1(K,\widehat\omega^m).$ We obtain the desired property near $z_0$ by taking $\tau\leq\alpha/N.$
\end{proof}
Finally, the following results establish a relation between the regularity of functions on an analytic set and that of their lifts to a desingularization.
\begin{proposition}\label{pro controle des coeff holder}
Let $Z,$ $\widehat Z,$ $\pi$ be as in Lemma \ref{le remonté d'estimée par surjection} and $v$ be a function on $Z.$ Assume that the lift of $v$ to $\widehat Z$ is $(K,\alpha)$-Hölder continuous. Then, for every compact $L$ of $Z,$ there exist constants $0<\alpha'\leq 1$ and $a>0,$ independent of $v,$ such that $v$ is $(aK,\alpha')$-Hölder continuous on $L.$
\end{proposition}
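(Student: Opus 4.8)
The plan is to transfer the H\"older control from $\widehat v:=v\circ\pi$ back to $v$ by a distance-comparison argument based on Lojasiewicz's inequality applied to the fiber product of $\pi$ with itself. First observe that $\widehat v$ is constant along the fibers of $\pi$: if $\pi(\widehat x)=\pi(\widehat y)$ then $\widehat v(\widehat x)=v(\pi(\widehat x))=v(\pi(\widehat y))=\widehat v(\widehat y)$; in particular $v(x)=\widehat v(\widehat x)$ for every $\widehat x\in\pi^{-1}(x)$. Since $\pi$ is proper, $\widehat L:=\pi^{-1}(L)$ is compact, and since $\widehat v$ is $(K,\alpha)$-H\"older it satisfies $|\widehat v(\widehat z)-\widehat v(\widehat z')|\le K\,\mathrm{diam}(\widehat L)^\alpha$ for $\widehat z,\widehat z'\in\widehat L$; as $v(x)=\widehat v(\widehat x)$ with $\widehat x\in\widehat L$ for $x\in L$, this already controls $|v(x)-v(y)|$ for pairs $x,y\in L$ whose distance is bounded below by a fixed constant, with a bound depending only on $\widehat L$ and linearly on $K$. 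It therefore suffices to treat pairs $x,y\in L$ with $\dist(x,y)$ small.

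The key step is a distance estimate for the analytic set
$$W:=\{(\widehat a,\widehat b)\in\widehat Z\times\widehat Z:\pi(\widehat a)=\pi(\widehat b)\}=(\pi\times\pi)^{-1}(\Delta_Z).$$
On $\widehat Z\times\widehat Z$ consider the subanalytic functions $g:=\dist(\cdot,W)$ and $h(\widehat a,\widehat b):=\dist(\pi(\widehat a),\pi(\widehat b))$; one has $h^{-1}(0)=g^{-1}(0)=W$. Covering the compact set $\widehat L\times\widehat L$ by finitely many charts of the ambient K\"ahler manifolds (on which the intrinsic and Euclidean distances are comparable), Theorem~\ref{th in� de Loja classique pour les fonction} applied to $h$ and $g$ produces $N\ge1$ and a constant, depending only on $\pi$ and $L$, such that $\dist((\widehat a,\widehat b),W)^N\lesssim\dist(\pi(\widehat a),\pi(\widehat b))$ for all $(\widehat a,\widehat b)$ near $\widehat L\times\widehat L$. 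Hence, given $x,y\in L$ with $\dist(x,y)$ small and any $\widehat x\in\pi^{-1}(x)$, $\widehat y\in\pi^{-1}(y)$, we have $h(\widehat x,\widehat y)=\dist(x,y)$, so there is a point $(\widehat x',\widehat y')\in W$ with $\dist(\widehat x,\widehat x')+\dist(\widehat y,\widehat y')\lesssim\dist(x,y)^{1/N}$.

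It then remains to combine these ingredients. Using $\widehat v(\widehat x')=\widehat v(\widehat y')$ (since $(\widehat x',\widehat y')\in W$), the triangle inequality, and the $(K,\alpha)$-H\"older continuity of $\widehat v$, we get
\begin{align*}
|v(x)-v(y)|=|\widehat v(\widehat x)-\widehat v(\widehat y)|&\le|\widehat v(\widehat x)-\widehat v(\widehat x')|+|\widehat v(\widehat y')-\widehat v(\widehat y)|\\
&\le K\dist(\widehat x,\widehat x')^\alpha+K\dist(\widehat y,\widehat y')^\alpha\lesssim K\,\dist(x,y)^{\alpha/N}.
\end{align*}
This is the desired estimate for pairs at small distance, with $\alpha':=\alpha/N$ (which is $\le1$ because $N\ge1$ and $\alpha\le1$) and with $a$ the resulting implied constant; merging it with the oscillation bound of the first paragraph yields the statement on all of $L$. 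Since every constant produced above depends only on $\pi$, on $L$, and on the fixed metrics, and since $K$ factors out linearly throughout, $a$ and $\alpha'$ are indeed independent of $v$.

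The hard part is the distance estimate of the second paragraph: the fact that the ``horizontal'' distance $\dist(\pi(\widehat a),\pi(\widehat b))$ dominates, up to a Lojasiewicz power, the distance from $(\widehat a,\widehat b)$ to the fiber product $W$. One must make sure this is a genuine subanalytic statement — which is why we localize to charts around the compact $\widehat L\times\widehat L$ — and that the exponent $N$ together with the constants come only from the geometry of $\pi$ near $\widehat L$, not from $v$. If one prefers, one may first desingularize $\widehat Z$ exactly as in the proof of Lemma~\ref{le remont� d'estim�e par surjection}, so that $\widehat Z\times\widehat Z$ becomes a product of smooth manifolds and the chart bookkeeping becomes transparent, but this reduction is not essential.
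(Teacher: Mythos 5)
Your proof is correct and follows essentially the same route as the paper: both consider the set $W=(\pi\times\pi)^{-1}(\Delta_Z)$ (the paper's $\widehat\Delta$), apply Lojasiewicz's inequality to compare $\dist(\cdot,W)$ with the pushed-down distance, and then transfer the H\"older bound via the triangle inequality using the fiber-constancy of $\widehat v$. The explicit splitting into large- and small-distance pairs and the remark about chart bookkeeping are harmless refinements that do not change the argument.
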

\begin{proof}
Let $\Delta$ be the diagonal of $Z\times Z.$ We still denote by $\pi$ the map induced on the product $\widehat Z\times\widehat Z$ and we set $\widehat \Delta=\pi^{-1}(\Delta).$ 
As in Lemma \ref{le estime voisinage de transforme totale}, by Lojasiewicz inequality, we have
$$\dist(\widehat a,\widehat\Delta)^M\lesssim\dist(\pi(\widehat a),\Delta),$$
if $\pi(\widehat a)\in L\times L.$ Therefore, if we set $\widehat a=(\widehat x,\widehat x')$ and $\pi(\widehat a)=(x,x'),$ then we can rewrite this inequality as
\begin{equation}\label{eq lemme coeff holder}
(\dist(\widehat x,\widehat z)+\dist(\widehat x',\widehat z'))^M\lesssim\dist(x,x'),
\end{equation}
for some $\widehat z,\widehat z'\in\widehat Z$ such that $\pi(\widehat z)=\pi(\widehat z').$

Now, let $v$ be as in the proposition and $\widehat v=v\circ\pi$ denote its lift. Taking the same notation as above we have
$$|v(x)-v(x')|=|\widehat v(\widehat x)-\widehat v(\widehat x')|\leq |\widehat v(\widehat x)-\widehat v(\widehat z)|+|\widehat v(\widehat z')-\widehat v(\widehat x')|,$$
since $\pi(\widehat z)=\pi(\widehat z')$ which implies that $\widehat v(\widehat z)=\widehat v(\widehat z').$ Therefore, the assumption on $\widehat v$ implies that
$$|v(x)-v(x')|\leq K(\dist(\widehat x,\widehat z)^\alpha+\dist(\widehat z',\widehat x')^\alpha),$$
and finally \eqref{eq lemme coeff holder} gives
$$|v(x)-v(x')|\leq aK\dist(x,x')^{\alpha/M},$$
where $a>0$ depends only on $\alpha,$ $L$ and $\pi.$
\end{proof}
\begin{corollary}\label{cor holder continuite pour les c-holo}
For every compact $L$ of $Z,$ there exists $0<\alpha\leq1$ such that every continuous weakly holomorphic function on $Z$ is $\alpha$-Hölder continuous on $L.$ Moreover, every uniformly bounded family of such functions is uniformly $\alpha$-Hölder continuous on $L.$
\end{corollary}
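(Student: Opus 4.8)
The plan is to pull everything back to a desingularization and then invoke Proposition \ref{pro controle des coeff holder}. Fix a resolution of singularities $\pi:\widehat Z\to Z$: thus $\widehat Z$ is a smooth (Kähler) manifold, $\pi$ is proper and surjective, and $\pi$ restricts to a biholomorphism over the regular locus $Z_{\mathrm{reg}}$, so that $E:=\pi^{-1}(Z_{\mathrm{sing}})$ is a proper analytic subset of $\widehat Z$. This is exactly the situation of Proposition \ref{pro controle des coeff holder}.

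The first step is to check that the lift of a \emph{continuous} weakly holomorphic function is genuinely holomorphic on the smooth manifold $\widehat Z$. Let $v$ be continuous and weakly holomorphic on $Z$, and set $\widehat v:=v\circ\pi$. Over $\widehat Z\setminus E$, the map $\pi$ is a biholomorphism onto $Z_{\mathrm{reg}}$ and $v$ is holomorphic there, hence $\widehat v$ is holomorphic on $\widehat Z\setminus E$; moreover $\widehat v$ is continuous on $\widehat Z$, so it is locally bounded. Since $E$ is a proper analytic subset of the manifold $\widehat Z$, Riemann's extension theorem for bounded holomorphic functions shows that $\widehat v$ coincides with a holomorphic function on all of $\widehat Z$. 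This is the step I expect to be the only delicate point, and it is precisely where the continuity (equivalently, local boundedness) hypothesis is used.

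The second step is to turn holomorphy into Hölder regularity by Cauchy estimates. Given the compact $L\subset Z$, set $\widehat L:=\pi^{-1}(L)$, which is compact since $\pi$ is proper, and pick a relatively compact open neighborhood $\widehat L'$ of $\widehat L$ in $\widehat Z$ and a compact $\widehat L''$ with $\widehat L'\Subset\widehat L''$. Cauchy estimates on $\widehat L''$ give $\sup_{\widehat L'}|d\widehat v|\lesssim\|\widehat v\|_{L^\infty(\widehat L'')}\le\|v\|_{L^\infty(\pi(\widehat L''))}$, and integrating along paths in $\widehat Z$ yields that $\widehat v$ is $(K,1)$-Hölder continuous on $\widehat L'$ with $K\le c\,\|v\|_{L^\infty(\pi(\widehat L''))}$, where $c>0$ depends only on $L$ and $\pi$, not on $v$.

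Finally I would feed this into Proposition \ref{pro controle des coeff holder} with exponent $\alpha=1$. Although that proposition is phrased with a Hölder bound on $\widehat Z$, its proof only uses the behaviour of $\widehat v$ near $\pi^{-1}(L)$ through the Lojasiewicz inequality \eqref{eq lemme coeff holder}, so the estimate on $\widehat L'\supset\widehat L$ obtained above suffices. It produces constants $0<\alpha'\le1$ and $a>0$, depending only on $L$ and $\pi$, such that $v$ is $(aK,\alpha')$-Hölder continuous on $L$; hence $v$ is $\alpha'$-Hölder on $L$. Since $K\le c\|v\|_{L^\infty(\pi(\widehat L''))}$ depends linearly on the sup-norm of $v$, any family $\{v_i\}$ with $\sup_i\|v_i\|_{L^\infty}<\infty$ is uniformly $\big(a c\sup_i\|v_i\|_{L^\infty},\,\alpha'\big)$-Hölder continuous on $L$, which gives the second assertion.
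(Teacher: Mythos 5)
Your proof is correct and follows the same route as the paper: desingularize, observe that the lift is a genuine holomorphic function on the smooth model (hence Lipschitz by Cauchy estimates), and push the Hölder bound down through Proposition~\ref{pro controle des coeff holder}. The only difference is that you spell out the Riemann extension step needed to see that the continuous lift is holomorphic across $\pi^{-1}(Z_{\mathrm{sing}})$, which the paper leaves implicit in the phrase ``the result is known if $Z$ is smooth with $\alpha=1$.''
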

\begin{proof}
Recall that a continuous function on $Z$ is weakly holomorphic if it is holomorphic on the regular part of $Z.$ The result is known if $Z$ is smooth with $\alpha=1.$ Therefore, in general, it is enough to apply Proposition \ref{pro controle des coeff holder} to a desingularization of $Z.$
\end{proof}

In Proposition \ref{pro loja2}, we will need a similar result in a local setting but with a uniform control of the constants. It is the aim of the two following results.

\begin{proposition}\label{pro controle local des coeff holder}
Let $Z,$ $\widehat Z,$ $\pi$ and $L$ be as in Proposition \ref{pro controle des coeff holder}. Let $v$ be a function defined on a ball $B_Z(y,r)\subset L$ with $0<r\leq1/2.$ Assume that $\widehat v=v\circ\pi$ is $(K,\alpha)$-Hölder continuous. Then, there exist constants $0<\alpha'\leq 1,$ $a>0$ and $N\geq1,$ independent of $v,$ $y$ and $r$ such that $v$ is $(aK,\alpha')$-Hölder continuous on $B_Z(y,r^N).$
\end{proposition}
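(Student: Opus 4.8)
The plan is to re-run the proof of Proposition~\ref{pro controle des coeff holder}, paying attention to two points. First, the Lojasiewicz exponent produced there depends only on the compact $L$ (and on $\pi$), hence is uniform in $y$ and $r$. Second --- and this is the new difficulty --- every point entering the final estimate must be forced into $\pi^{-1}(B_Z(y,r))$, the domain on which the H\"older hypothesis for $\widehat v$ is assumed; this is exactly what makes it necessary to pass from $B_Z(y,r)$ to the smaller ball $B_Z(y,r^N)$.

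As in Lemma~\ref{le remont� d'estim�e par surjection}, I would first reduce to the case of a smooth $\widehat Z$ by replacing it with a desingularization: the desingularization map is Lipschitz on the relevant compact, so the lift of $v$ stays $(K',\alpha)$-H\"older with $K'\lesssim K$, and the statement for $v$ is unaffected. Then $\widehat Z$ is geodesically convex at a uniform scale $\rho_0>0$ along $\widehat L:=\pi^{-1}(L)$ (which is compact, $\pi$ being proper), and $\pi$ is Lipschitz near $\widehat L$ with a constant depending only on $L$ and $\pi$. Applying Lojasiewicz's inequality (Theorem~\ref{th in� de Loja classique pour les fonction}) on $\widehat L\times\widehat L$ to the subanalytic functions $\widehat a\mapsto\dist\bigl((\pi\times\pi)(\widehat a),\Delta\bigr)$ and $\widehat a\mapsto\dist(\widehat a,\widehat\Delta)$, where $\Delta$ is the diagonal of $Z\times Z$ and $\widehat\Delta:=(\pi\times\pi)^{-1}(\Delta)$, and noting that both have zero set $\widehat\Delta$, gives $M\geq1$, depending only on $L$ and $\pi$, with $\dist(\widehat a,\widehat\Delta)^M\lesssim\dist\bigl((\pi\times\pi)(\widehat a),\Delta\bigr)$ on $\widehat L\times\widehat L$. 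As in the proof of Proposition~\ref{pro controle des coeff holder}, this means: for $x,x'\in L$ and any lifts $\widehat x\in\pi^{-1}(x)$, $\widehat x'\in\pi^{-1}(x')$, there are $\widehat z,\widehat z'$ with $\pi(\widehat z)=\pi(\widehat z')$ and $\dist(\widehat x,\widehat z)+\dist(\widehat x',\widehat z')\lesssim\dist(x,x')^{1/M}$.

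Now I would fix $N\geq1$ large, depending only on $M$, $\rho_0$ and the Lipschitz constant of $\pi$, take $x,x'\in B_Z(y,r^N)\subset B_Z(y,r)$, pick lifts $\widehat x,\widehat x'$ and the associated $\widehat z,\widehat z'$. From $\dist(x,x')\leq 2r^N$ the displacements $\dist(\widehat x,\widehat z),\dist(\widehat x',\widehat z')$ are $\lesssim r^{N/M}$, hence $<\rho_0$ for $N$ large; so $\widehat x$ and $\widehat z$ are joined in $\widehat Z$ by a geodesic of length $\lesssim r^{N/M}$, whose $\pi$-image is a path in $Z$ from $x$ to $\pi(\widehat z)$ of length $\leq C r^{N/M}$. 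Since $x\in B_Z(y,r^N)$ and $\dist(x,y)+Cr^{N/M}\leq r$ for $N$ large and $r\leq 1/2$, that path stays in $B(y,r)\cap Z$, so $\pi(\widehat z)$ --- and hence $\pi(\widehat z')=\pi(\widehat z)$ --- lies in the component $B_Z(y,r)$, i.e. in the domain of $v$. All four points $\widehat x,\widehat x',\widehat z,\widehat z'$ are then in $\pi^{-1}(B_Z(y,r))$, and using $\widehat v(\widehat z)=v(\pi(\widehat z))=v(\pi(\widehat z'))=\widehat v(\widehat z')$ together with the hypothesis on $\widehat v$,
\[
|v(x)-v(x')|=|\widehat v(\widehat x)-\widehat v(\widehat x')|\leq K\bigl(\dist(\widehat x,\widehat z)^\alpha+\dist(\widehat x',\widehat z')^\alpha\bigr)\lesssim K\,\dist(x,x')^{\alpha/M},
\]
which is the assertion with $\alpha'=\alpha/M$, the above $N$, and $a$ absorbing the constants --- all depending only on $Z,\widehat Z,\pi,L,\alpha$.

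The main obstacle is the component bookkeeping in the third step: the auxiliary points $\widehat z,\widehat z'$ supplied by Lojasiewicz's inequality a priori only project into $B(y,r)\cap Z$, while $v$ is defined only on the single component $B_Z(y,r)$; forcing them into that component is what dictates both the reduction to a smooth $\widehat Z$ (for uniform local geodesic connectivity) and the loss from $r$ to $r^N$ in the radius.
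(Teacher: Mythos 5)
Your proof follows the paper's route: apply Lojasiewicz's inequality on $\widehat Z\times\widehat Z$ exactly as in Proposition~\ref{pro controle des coeff holder} to obtain $\widehat z,\widehat z'$ with $\pi(\widehat z)=\pi(\widehat z')$ and $\dist(\widehat x,\widehat z)+\dist(\widehat x',\widehat z')\lesssim\dist(x,x')^{1/M}$, check that these points project into $B_Z(y,r)$ so that $\widehat v$ is defined at them, and then conclude by the same Hölder estimate. Where you add genuine content is in the verification step: the paper bounds $\dist(x,\pi(\widehat z))$ by $\dist(x,x')^{1/M}$ and immediately asserts $\widehat z\in\pi^{-1}(B_Z(y,r))$ for $N$ large, but $B_Z(y,r)$ is a single connected component of $B(y,r)\cap Z$, and ambient closeness of $\pi(\widehat z)$ to $x$ does not by itself place $\pi(\widehat z)$ in that component. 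Your fix --- first desingularizing $\widehat Z$ to secure a uniform geodesic convexity radius along $\pi^{-1}(L)$, then projecting a short geodesic from $\widehat x$ to $\widehat z$ to get a short path in $Z$ that cannot leave $B(y,r)\cap Z$ --- correctly closes this gap. A lighter alternative that avoids the extra desingularization is to invoke Theorem~\ref{th relation entre les distances sur X} directly on $\widehat Z$: it upgrades $\dist(\widehat x,\widehat z)\lesssim r^{N/M}$ to a short connecting path in $\widehat Z$ whose $\pi$-image serves the same purpose. Note also that in the paper's actual applications (Corollaries~\ref{cor holder continuite pour les c-holo} and~\ref{cor holder continuite local pour les c-holo}), $\widehat Z$ is itself a desingularization and hence smooth, which is likely why the paper treats this point as immediate.
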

\begin{proof}
The proof is the same as that of Proposition \ref{pro controle des coeff holder} except that we have to check that if $x$ and $x'$ are in $B_Z(y,r^N)$ with $N\geq1$ large enough, then $\widehat v$ is well-defined at the points $\widehat z$ and $\widehat z'$ defined in \eqref{eq lemme coeff holder}.

Since $\pi$ is holomorphic, we have $\dist(x,\pi(\widehat z))\lesssim\dist(\widehat x,\widehat z).$ Therefore, by \eqref{eq lemme coeff holder} we have
$$\dist(x,\pi(\widehat z))\lesssim \dist(\widehat x,\widehat z)+\dist(\widehat x',\widehat z')\lesssim \dist(x,x')^{1/M}.$$
Hence, if $N$ is large enough, then $\widehat z$ and $\widehat z'$ belong to $\pi^{-1}(B_Z(y,r)).$ Then, the result follows as in Proposition \ref{pro controle des coeff holder}.
\end{proof}

\begin{corollary}\label{cor holder continuite local pour les c-holo}
 For every compact $L$ of $Z,$ there are constants $0<\alpha\leq1,$ $K>0$ and $N\geq1$ such that if $v$ is a continuous weakly holomorphic function on $B_Z(y,r)\subset L$ with $|v|\leq1$ then $v$ is $(K/r,\alpha)$-Hölder continuous on $B_Z(y,r^N).$
\end{corollary}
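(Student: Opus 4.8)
The plan is to deduce Corollary~\ref{cor holder continuite local pour les c-holo} from Proposition~\ref{pro controle local des coeff holder} in essentially the same way that Corollary~\ref{cor holder continuite pour les c-holo} follows from Proposition~\ref{pro controle des coeff holder}, the only new input being the smooth case with the correct dependence of the H\"older constant on the radius $r$. So first I would fix a desingularization $\pi:\widehat Z\to Z$, which is a surjective proper holomorphic map from a smooth K\"ahler manifold (after cutting down to a relatively compact neighbourhood of $L$, so the local constants coming from Proposition~\ref{pro controle local des coeff holder} are uniform over $L$). Given a continuous weakly holomorphic $v$ on $B_Z(y,r)\subset L$ with $|v|\le 1$, its lift $\widehat v=v\circ\pi$ is holomorphic on the smooth manifold $\pi^{-1}(B_Z(y,r))$ and still bounded by $1$; the point is to establish that such a bounded holomorphic function on a ball of radius $\sim r$ in a fixed smooth manifold is $(K'/r,1)$-H\"older (i.e.\ Lipschitz with constant $K'/r$) on a concentric ball of radius, say, $r/2$. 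This is the standard Cauchy-estimate statement: in a fixed finite atlas, a holomorphic function bounded by $1$ on a Euclidean ball of radius $\rho$ has gradient bounded by $O(1/\rho)$ on the ball of radius $\rho/2$, hence is $O(1/\rho)$-Lipschitz there; one must only check that a metric ball $B_{\widehat Z}(\widehat y,\rho)$ contains and is contained in comparable coordinate balls, with comparison constants uniform in $\widehat y$ — true by compactness of (a neighbourhood of) $\pi^{-1}(L)$.

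Next I would feed this into Proposition~\ref{pro controle local des coeff holder}. There is a small bookkeeping issue: Proposition~\ref{pro controle local des coeff holder} wants $\widehat v$ to be $(K,\alpha)$-H\"older on all of $\pi^{-1}(B_Z(y,r))$, whereas Cauchy estimates only give Lipschitz control on a slightly smaller ball. I would handle this by first shrinking: apply the previous paragraph to get that $v$ restricted to $B_Z(y,r)$ has lift which is $(K_0/r,1)$-H\"older on $\pi^{-1}(B_Z(y,r/2))$, then regard $v|_{B_Z(y,r/2)}$ as the function to which Proposition~\ref{pro controle local des coeff holder} is applied (with its radius $r/2$ in place of $r$, and $K=K_0/r$, $\alpha=1$). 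The proposition then outputs exponents $0<\alpha'\le1$ and $N_0\ge1$ and a constant $a>0$, all independent of $v,y,r$, such that $v$ is $(a K_0/r,\alpha')$-H\"older on $B_Z(y,(r/2)^{N_0})$. Absorbing the factor $2^{-N_0}$ into a new exponent $N$ (so that $B_Z(y,r^N)\subset B_Z(y,(r/2)^{N_0})$ for all $0<r\le 1/2$) and setting $K:=aK_0$, $\alpha:=\alpha'$ gives exactly the assertion.

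The main obstacle — really the only non-routine point — is making sure every constant is genuinely uniform over the compact $L$ and independent of the center $y$ and radius $r$. The Cauchy estimate in a chart is uniform, but one must control the distortion between the intrinsic distance on $\widehat Z$ and coordinate distances uniformly near $\pi^{-1}(L)$; this is where compactness of $\widehat Z$ (or of a suitable neighbourhood of $\pi^{-1}(L)$) is used, together with the fact that $\pi$ is proper so $\pi^{-1}$ of a compact is compact. The Lojasiewicz-type inequality \eqref{eq lemme coeff holder} used inside Proposition~\ref{pro controle local des coeff holder} already carries its constants uniformly on $L\times L$, so no new difficulty arises there. Everything else is the same diagram-chase on $\widehat Z\times\widehat Z$ as in Proposition~\ref{pro controle des coeff holder}: pick $\widehat x,\widehat x'$ over $x,x'$, replace them by nearby points $\widehat z,\widehat z'$ with $\pi(\widehat z)=\pi(\widehat z')$ so that $\widehat v(\widehat z)=\widehat v(\widehat z')$, and convert the H\"older estimate for $\widehat v$ into one for $v$ via \eqref{eq lemme coeff holder}, exactly as in the proof of Proposition~\ref{pro controle local des coeff holder}.
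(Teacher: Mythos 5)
Your proof is correct and follows essentially the same route as the paper's: desingularize $Z$, use Cauchy estimates to show the lifted function is $O(1/r)$-Lipschitz on the preimage of the half-radius ball, and then invoke Proposition~\ref{pro controle local des coeff holder}. The paper leaves the chart/distance-comparison details and the $r$-versus-$r/2$ bookkeeping implicit, while you spell them out, but the argument is the same.
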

\begin{proof}
Let $\pi:\widehat Z\to Z$ be a desingularization. Let $\widehat z$ be in $\pi^{-1}(B_Z(y,r/2)).$ Since $\pi$ is holomorphic, there is $a>0$ such that $B_{\widehat Z}(\widehat z,r/a)$ is contained in $\pi^{-1}(B_Z(y,r)).$ Therefore, by Cauchy's inequality, $\widehat v$ is $a/r$-Lipschitz on $\pi^{-1}(B_Z(y,r/2)).$ Hence, the result follow from Proposition \ref{pro controle local des coeff holder}.
\end{proof}

\section{Volume estimate for endomorphisms}\label{sec vol}
The multiplicities of an endomorphism $f$ are strongly related to volume estimates which were used successfully to solve equidistribution problems. In what follows, we generalize Lojasiewicz type inequalities obtained in \cite{fs-cdhd2}, \cite{ds-equi} and \cite{ds-speed} to analytic sets, possibly singular. The aim is to control the size of a ball under iterations of $f$ in an invariant analytic set. Singularities, in particular the points where the analytic sets are not locally irreducible, lead to technical difficulties.

In this section, $X$ always denotes an irreducible analytic set of a smooth manifold. In order to avoid some problems related to the local connectedness of analytic sets, instead of the distance induced by an embedding of $X,$ we consider the distance $\rho$ defined by paths in $X.$ Namely, if $x,y\in X$ then $\rho(x,y)$ is the length of the shortest path in $X$ between $x$ and $y.$ These two distances on $X$ are related by the following result (see e.g. \cite{bm-subana}).
\begin{theorem}\label{th relation entre les distances sur X}
 Let $K$ be a compact subset of $X.$ There exists a constant $r>0$ such that for all $x,y\in K$ we have
$$\dist(x,y)\leq\rho(x,y)\lesssim\dist(x,y)^r.$$
\end{theorem}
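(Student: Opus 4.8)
The plan is to derive the statement from the classical Lojasiewicz inequality for subanalytic functions, once one knows that the inner distance $\rho$ is subanalytic near the diagonal. The left-hand inequality $\dist(x,y)\le\rho(x,y)$ requires no work: every rectifiable path contained in $X$ joining $x$ to $y$ is in particular a path in the ambient manifold, so its length is at least the ambient distance $\dist(x,y)$, and taking the infimum over such paths gives the claim. Let us also record that, since $X$ is irreducible, it is connected and rectifiably path-connected, so $\rho$ is finite, defines a genuine distance on $X$, and is continuous for the ambient topology; in particular $\rho$ is bounded by some $D>0$ on the compact set $K\times K$.

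For the right-hand inequality I would first establish a local estimate and then globalize. Fix $p\in X$ and local coordinates identifying a neighborhood of $p$ with an analytic subset of an open set $U\subset\mathbb C^n$. For a ball $B$ with $\overline B\subset U$, the function $(x,y)\mapsto\rho_B(x,y)$, where $\rho_B$ is the length of the shortest path from $x$ to $y$ inside $X\cap B$, is subanalytic on $(X\cap B)\times(X\cap B)$; this is the one non-elementary ingredient and is exactly what we borrow from \cite{bm-subana}. The function $(x,y)\mapsto\dist(x,y)$ is subanalytic as well, and both functions vanish precisely on the diagonal. Applying the classical Lojasiewicz inequality on a smaller ball $B'$ relatively compact in $B$ (with $h=\dist$, $g=\rho_B$, since $\dist^{-1}(0)\subset\rho_B^{-1}(0)$), we obtain $N_p\ge1$ with $\dist(x,y)\gtrsim\rho_B(x,y)^{N_p}$, hence $\rho_B(x,y)\lesssim\dist(x,y)^{1/N_p}$ for $x,y\in X\cap B'$. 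Since any path realizing $\rho_B$ is a competitor for $\rho$, we have $\rho\le\rho_B$, and therefore $\rho(x,y)\lesssim\dist(x,y)^{1/N_p}$ near $p$.

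To globalize, cover $K$ by finitely many such balls $B'_1,\dots,B'_m$ (centered at points of $K$), with exponents $1/N_1,\dots,1/N_m$; set $r:=\min_j 1/N_j$ and let $\delta>0$ be a Lebesgue number of the covering, so that any $x,y\in K$ with $\dist(x,y)<\delta$ lie in a common $B'_j$. For such pairs the local estimate gives $\rho(x,y)\lesssim\dist(x,y)^r$. For pairs with $\dist(x,y)\ge\delta$, the boundedness of $\rho$ on $K\times K$ gives $\rho(x,y)\le D\le D\delta^{-r}\dist(x,y)^r$. Combining the two cases yields the inequality with this $r$.

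The main obstacle is precisely the subanalyticity of the inner distance $\rho_B$ in a neighborhood of the diagonal; everything else is elementary. If one wished to avoid quoting it, an alternative route is a direct geometric argument: fix a subanalytic stratification of $X$ compatible with its singular locus, and estimate the inner distance stratum by stratum, bounding, near each singular point, the length of a suitable connecting path in terms of powers of $\dist$ by means of Lojasiewicz inequalities for the defining equations and for the distances to the strata. This is more laborious but relies only on the material recalled in Section~\ref{sec loja}.
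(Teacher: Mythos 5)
The paper does not prove this theorem; it cites it to \cite{bm-subana} and moves on, so there is no argument in the text to compare against. Your reconstruction is essentially correct and follows the standard route: reduce to a local statement, apply the Lojasiewicz inequality with $h=\dist$ and $g=$ the inner distance, and patch by a Lebesgue-number argument. The left inequality is indeed immediate.

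Two points deserve to be made explicit. First, for $\rho_B$ to be finite on $(X\cap B)^2$ (and hence a legitimate $g$) you need $X\cap B$ to be path-connected; this holds provided $B$ is a small enough ball centered at a point $p$ of $X$, because the germ of $X$ at $p$ has finitely many local irreducible branches and each of them contains $p$. This is not a cosmetic remark: it is exactly where the argument survives the case of two nearby points sitting on different local branches, where $\dist$ is tiny but any path joining them must pass near the singular locus — the detour is absorbed by the exponent $N_p$, but only because both points connect to $p$ inside $B$. Second, Theorem~\ref{th in\'e de Loja classique pour les fonction} as stated applies to subanalytic functions on an open subset of $\mathbb C^k$, while you invoke it for functions on $(X\cap B)^2$; the subanalytic Lojasiewicz inequality does hold in this generality and comes from the same source, but you should either quote that version or make the routine reduction via graphs. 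Lastly, you invoke continuity and boundedness of $\rho$ on $K\times K$ before the local estimate; logically it is cleaner to derive boundedness (and continuity) from the local estimates together with connectedness of $X$, since deriving them directly is essentially the statement being proved. None of these affects the correctness of the plan — the only genuinely non-elementary input is the subanalyticity of the inner metric, which you identify correctly.
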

The first step to state volume estimate is the following result.
\begin{proposition}\label{pro loja1}
 Let $\Gamma\subset \mathbb B\times \mathbb B$ and $X\subset \mathbb B$ be two analytic subsets with $X$ locally irreducible and such that the first projection $\pi: \mathbb B\times \mathbb B\to \mathbb B$ defines a ramified covering of degree $m$ from $\Gamma$ to $X.$ There exist constants $a>0$ and $b\geq1$ such that if $x,y\in X\cap \mathbb B_{1/2}$ then we can write
$$\pi^{-1}(x)\cap\Gamma=\{x^1,\ldots,x^m\} \textrm{ and } \pi^{-1}(y)\cap\Gamma=\{y^1,\ldots,y^m\},$$
with $\dist(x,y)\geq a\dist(x^i,y^i)^{bm}.$ Moreover, $a$ increases with $m$ but is independent of $\Gamma$ and $b$ depends only on $X.$
\end{proposition}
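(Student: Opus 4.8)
The plan is to reduce the statement to a univariate situation by slicing $X$ with generic complex lines, and to control the symmetric functions of the fibre of $\Gamma$ along these slices. The covering $\pi\colon\Gamma\to X$ of degree $m$ is encoded by a Weierstrass-type polynomial $P(x,w)=w^m+a_{m-1}(x)w^{m-1}+\cdots+a_0(x)$ whose coefficients $a_j$ are weakly holomorphic functions on $X$ (they are holomorphic on $X_{\mathrm{reg}}$ and bounded since $\Gamma\subset\mathbb B\times\mathbb B$), and whose roots in $w$ over a point $x$ are exactly the second coordinates of $\pi^{-1}(x)\cap\Gamma$. First I would fix the matching $x^i\leftrightarrow y^i$: join $x$ to $y$ by a path in $X\cap\mathbb B_{1/2}$ (using the path distance $\rho$ and Theorem~\ref{th relation entre les distances sur X}), lift it to $\Gamma$, and let $x^i,y^i$ be the endpoints of the $i$-th lifted branch; then $\dist(x^i,y^i)$ is comparable to $|x-y|+|w_i(x)-w_i(y)|$ where $w_i$ are the (locally defined, continuously varying) roots.

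The core estimate is then: $|w_i(x)-w_i(y)|\lesssim\dist(x,y)^{1/(bm)}$ for some $b$ depending only on $X$, with the implicit constant depending on $m$. The mechanism is that a root of a monic polynomial is controlled by the polynomial's values and its coefficients: if $P_x(w):=P(x,w)$ and $P_y(w):=P(y,w)$, then $|P_x(w_i(y))|=|P_x(w_i(y))-P_y(w_i(y))|\le\sum_j|a_j(x)-a_j(y)|\,|w_i(y)|^j\lesssim\max_j|a_j(x)-a_j(y)|$. By Corollary~\ref{cor holder continuite pour les c-holo}, the bounded weakly holomorphic functions $a_j$ are uniformly $\alpha$-Hölder on $X\cap\overline{\mathbb B_{1/2}}$ for some $\alpha=\alpha(X)>0$, so $|P_x(w_i(y))|\lesssim\dist(x,y)^\alpha$. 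On the other hand $P_x$ factors as $\prod_l(w-w_l(x))$, so $|P_x(w_i(y))|=\prod_l|w_i(y)-w_l(x)|\ge|w_i(y)-w_i(x)|^{?}$ — here one uses that $w_i(y)$ is close to $w_i(x)$ (same branch of the continuously lifted path) while it need not be close to the other roots; in the worst case all $m$ factors are comparably small, giving $|w_i(y)-w_i(x)|^m\lesssim|P_x(w_i(y))|\lesssim\dist(x,y)^\alpha$, hence $|w_i(x)-w_i(y)|\lesssim\dist(x,y)^{\alpha/m}$. Taking $b=1/\alpha$ (depending only on $X$) and absorbing all multiplicative constants, which grow with $m$ because of the $m$-fold product and the number of coefficients, gives $\dist(x,y)\ge a\,\dist(x^i,y^i)^{bm}$.

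The step I expect to be the main obstacle is handling the non-local-irreducibility and the singular locus of $X$ cleanly while keeping the constants independent of $\Gamma$: one must make sure the $a_j$ genuinely extend as weakly holomorphic functions across $\mathrm{Sing}(X)$ (this is where local irreducibility of $X$ enters, so that the local branch count of $\Gamma$ is globally $m$ and no monodromy obstruction appears), and one must control the lifting of the path through points where several branches of $\Gamma$ collide, i.e. where roots coincide. The collision issue is precisely why the exponent degrades to $bm$ rather than $b$: near a point where all $m$ roots merge, moving $x$ by $\delta$ can move a root by $\delta^{1/m}$, and the Hölder bound on the coefficients only sees the elementary symmetric functions. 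A secondary technical point is to pass from $|w_i(x)-w_i(y)|$ back to $\dist(x^i,y^i)$ in $\mathbb B\times\mathbb B$ and to check that the comparison constants between the ambient distance and the path distance on $X$ (Theorem~\ref{th relation entre les distances sur X}) only affect $b$ by a factor depending on $X$, not on $m$ or $\Gamma$; this is routine once the main inequality is in place.
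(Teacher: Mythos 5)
Your skeleton—encode the covering by Weierstrass polynomials, use boundedness plus local irreducibility to extend the coefficients $a_{j,\ell}$ continuously across $\mathrm{Sing}(X)$, invoke Corollary \ref{cor holder continuite pour les c-holo} to get uniform H\"older continuity, then pass the H\"older bound on coefficients to a bound on roots—is exactly the paper's, and your identification of $b=1/\alpha$ with $\alpha$ the H\"older exponent is also the paper's. The place where you diverge, and where there is a genuine gap, is the closing root estimate.

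You write ``in the worst case all $m$ factors are comparably small, giving $|w_i(y)-w_i(x)|^m \lesssim |P_x(w_i(y))| \lesssim \dist(x,y)^\alpha$.'' This inequality chain is backwards: the product formula $\prod_\ell |w_i(y) - w_\ell(x)| \lesssim \dist(x,y)^\alpha$ only yields $\min_\ell |w_i(y) - w_\ell(x)| \lesssim \dist(x,y)^{\alpha/m}$, i.e.\ $w_i(y)$ is close to \emph{some} root of $P_x$, not necessarily to $w_i(x)$. You appeal to ``same branch of the continuously lifted path'' to close this gap, but that is the very thing to be proved: along a lifted path through a point where several sheets collide, a continuously tracked root can exit one cluster and re-enter another, and the accumulated displacement is not directly controlled by the endpoint distance without a further argument (for instance, grouping roots of $P_x$ into clusters of size $\gtrsim r^{\alpha/m}$, noting clusters are stable along the path, and bounding the cluster diameter). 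As written, the step does not go through. The paper instead sidesteps the path-lifting matching entirely with a ``security ring'' (Rouch\'e-type) argument: among the $\sim 4km$ concentric annuli of width $r^{1/(bm)}$ around $\widetilde x_j$ some ring contains no root of $P_j(\cdot,x)$; the H\"older bound on $G_{j,c,\theta}$ then shows this ring stays root-free over a whole $\rho$-ball $B$ around $x$ containing $y$, so $\Gamma\cap(B\times\Sigma)=\varnothing$ for the corresponding polydisc boundary $\Sigma$, and a point of $\pi^{-1}(y)\cap\Gamma$ is trapped inside. This produces, for every $\widetilde x$ over $x$, a partner $\widetilde y$ over $y$ within distance $\lesssim r^{1/(bm)}$ with no root-tracking needed; the full bijective matching is then extracted as in \cite[Lemma 4.3]{ds-equi}. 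A secondary inaccuracy in your write-up: the second factor of $\mathbb B\times\mathbb B$ is $k$-dimensional, so you cannot use a single scalar Weierstrass polynomial; one needs one polynomial $P_j$ per coordinate $k+1\le j\le 2k$, as in the paper.
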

\begin{proof}
By Theorem \ref{th relation entre les distances sur X}, to establish the proposition, we can replace $\dist(x,y)$ by $\rho(x,y)$ on $X.$ For $w\in X_{Reg},$ we can define the $j$-th Weierstrass polynomial, $k+1\leq j\leq 2k,$ on $t\in \mathbb C$
$$P_j(t,w)=\prod_{z\in\pi^{-1}(w)\cap\Gamma}(t-z_j)=\sum_{l=0}^ma_{j,l}(w)t^l,$$
where $z=((z_1,\ldots,z_k),(z_{k+1},\ldots,z_{2k}))\in \mathbb B\times \mathbb B.$
The coefficients $a_{j,l}$ are holomorphic on $X_{Reg}$ and uniformly bounded by $m!$ since $\Gamma\subset \mathbb B\times \mathbb B.$ As $X$ is locally irreducible, they can be extended continuously to $X$ (see e.g. \cite{de-book}). It gives a continuous extension of the polynomials $P_j$ to $X$ with $P_j(z_j,\pi(z))=0$ if $z$ is in $\Gamma.$ Moreover, by Corollary \ref{cor holder continuite pour les c-holo} there exists $\alpha>0$ such that the coefficients $a_{j,l}$ are uniformly $\alpha$-Hölder continuous on $X\cap \mathbb B_{3/4}$ with respect to $\rho.$

We claim that there is a constant $a>0$ such that if $x,y\in X\cap \mathbb B_{1/2}$ and $x'\in \mathbb B$ with $\widetilde x:=(x,x')\in\Gamma$ then there is $\widetilde y\in \pi^{-1}(y)\cap\Gamma$ with
$$\rho(x,y)\geq a\dist(\widetilde x,\widetilde y)^{bm},$$
where $b=1/\alpha.$ From this, the result follows exactly as in the end of the proof of \cite[Lemma 4.3]{ds-equi}.

It remains to prove the claim. Fix $c>0$ large enough and let $r=\rho(x,y).$ We can assume that $r$ is non-zero and sufficiently small, otherwise the result is obvious. Since the covering has degree $m$, we can find an integer $2\leq l\leq4km$ such that for all $k+1\leq j\leq 2k$, no root of the polynomial $P_j(t,x)$ satisfies
$$c(l-1)r^{1/bm}\leq |\widetilde x_j-t|\leq c(l+1)r^{1/bm}.$$
It gives \textit{a security ring} over $x$ which does not intersect $\Gamma.$ Using the regularity of $P_j,$ it can be extend to a neighborhood of $x.$
More precisely, for $\theta\in\mathbb R$ we define $\xi_j=\widetilde x_j+cle^{i\theta}r^{1/bm}$ and
$$G_{j,c,\theta}(w)=c^{-m+1}P_j(\xi_j,w).$$
The choice of $l$ implies that
$$|G_{j,c,\theta}(x)|=c^{-m+1}|P_j(\xi_j,x)|\geq c^{-m+1}\prod_{z\in\pi^{-1}(x)}|\xi_j-z_j|\geq cr^{\alpha}.$$
Moreover, we deduce from the properties on the coefficients $a_{j,l}$ that the functions $G_{j,c,\theta}$ are uniformly $\alpha$-Hölder continuous on $X\cap \mathbb B_{3/4}$ with respect to $(j,c,\theta).$ Hence, if $c$ is large enough, they do not vanish on $B:=\{z\in X\,|\,\rho(x,z)<2r\}$ which contains $y.$

It implies that $P_j(t,w)\neq 0$ if $w$ is in $B$ and $|t-\widetilde x_j|=lcr^{1/bm}.$ Therefore, if we  denote by $\Sigma$ the boundary of the polydisc of center $x'$ and of radius $lcr^{1/bm},$ we have $\Gamma\cap (B\times \Sigma)=\varnothing.$ Hence, since $B$ is connected and contains $y,$ by continuity there is a point $\widetilde y$ in $\pi^{-1}(y)\cap\Gamma$ with $|\widetilde x_j-\widetilde y_j|\leq lcr^{1/bm}.$ This completes the proof of the claim.
\end{proof}
\begin{remark}\label{rem loja}
 Our proof shows that if we assume that $m\leq m_0$ for a fixed $m_0,$ then $a$ depends only on the Hölder constants $(K,\alpha)$ associated with $a_{j,l}.$ Furthermore, if $\alpha$ is fixed then $a$ is proportional to $K.$ Note that without assumption on the constants $a$ and $b$, Proposition \ref{pro loja1} can be deduced from Theorem \ref{th iné de Loja classique pour les fonction}.
\end{remark}

The control of multiplicities of the covering gives the following more precise result.
\begin{proposition}\label{pro loja2}
Let $X,$ $\Gamma$ and $\pi$ be as above. Let $Z\subset X$ be a proper analytic subset. Assume that the multiplicity at each point of $\pi^{-1}(x)$ is at most equal to $s$ if $x\in X\setminus Z.$ Then, there exist constants $a>0,$ $b\geq1$ and $N\geq1$ such that for all $0<t\leq1/2$ and $x,y\in X\cap \mathbb B_{1/2}$ with $\dist(x,Z)>t$ and $\dist(y,Z)>t,$ we can write
$$\pi^{-1}(x)\cap\Gamma=\{x^1,\ldots,x^m\} \textrm{ and } \pi^{-1}(y)\cap\Gamma=\{y^1,\ldots,y^m\},$$
with $\dist(x,y)\geq at^N\dist(x^i,y^i)^{bs}.$
\end{proposition}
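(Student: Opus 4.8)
The strategy is to refine the proof of Proposition \ref{pro loja1} by tracking the dependence on the multiplicities of the covering, which are now controlled by $s$ away from $Z$. The key point is that in the proof of Proposition \ref{pro loja1} the exponent $bm$ came from the degree $m$ of the covering, because the \emph{security ring} had to be found among $O(km)$ candidate annuli and the Hölder exponent $\alpha$ entered through $b=1/\alpha$. When $x$ and $y$ stay at distance $>t$ from $Z$, the local picture over a small ball around $x$ is a covering whose fibers have multiplicity at most $s$, so one expects the exponent $bm$ to be replaceable by $bs$ at the cost of an extra factor that measures how the relevant estimates degenerate as $x$ approaches $Z$ — this is where the factor $t^N$ comes from.

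First I would fix $x,y\in X\cap\mathbb B_{1/2}$ with $\dist(x,Z)>t$, $\dist(y,Z)>t$ and set $r=\rho(x,y)$; by Theorem \ref{th relation entre les distances sur X} it is harmless to pass from $\dist$ to the length metric $\rho$, and one may assume $r$ is small (otherwise the statement is trivial). If $r$ is comparable to $t$ or larger, the conclusion again follows from Proposition \ref{pro loja1} together with the diameter bound, so the real work is when $r\ll t$; in that regime the whole ball $B=\{z\in X:\rho(x,z)<2r\}$ stays away from $Z$, and over $B$ the covering $\pi\colon\Gamma\to X$ decomposes into sheets, each of which is a branch of multiplicity at most $s$. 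Restricting to one such branch, the Weierstrass polynomials $P_j(t,w)$ from the proof of Proposition \ref{pro loja1} factor, and one can run the security-ring argument using only $O(ks)$ candidate annuli instead of $O(km)$; this replaces the exponent $bm$ by $bs$.

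The extra factor $t^N$ enters through the regularity of the coefficient functions $a_{j,l}$ \emph{near} $Z$. On the open set $X\setminus Z$ the coefficients are holomorphic and bounded, but to apply the Hölder estimate on a ball of radius $\sim r$ around $x$ we need Hölder control with a constant that is uniform once $x$ is at distance $>t$ from the bad set; by Corollary \ref{cor holder continuite local pour les c-holo} (or rather its local, uniform version) the Hölder constant on $B_Z(x,r^{N'})$ is of size $O(1/r)$, but to get from the ball of radius $t$ down to the ball of radius $r$ one loses a power of $t$ governed by a Łojasiewicz inequality (Theorem \ref{th in� de Loja classique pour les fonction}) comparing $\dist(\cdot,Z)$ with the defining function of $Z$. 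Feeding a Hölder constant of size $t^{-N''}$ into the estimate $|G_{j,c,\theta}(x)|\gtrsim c\,r^\alpha$ and into the non-vanishing of $G_{j,c,\theta}$ on $B$ — cf.\ Remark \ref{rem loja}, which records exactly that $a$ is proportional to the Hölder constant $K$ when the multiplicity is bounded — forces one to enlarge $c$ by a power of $1/t$, and hence shrinks the final constant by a factor $t^N$.

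The main obstacle I anticipate is the step where one passes from the global covering of degree $m$ to the local branch of multiplicity $\le s$ over $B$: one must make sure that the continuous (weakly holomorphic) extensions of the branch coefficients to $B$ are still uniformly Hölder with the claimed $t$-dependence, and that the point $\widetilde y\in\pi^{-1}(y)\cap\Gamma$ produced by the connectedness-and-continuity argument indeed lies on the \emph{same} branch as $\widetilde x$, so that the pairing $x^i\leftrightarrow y^i$ in the statement is consistent. Handling the points where $X$ fails to be locally irreducible — so that $B_X(x,r)$ may have several components — is the delicate part of making this branch-matching well defined, and this is precisely where the length metric $\rho$ and the local Hölder control from Section \ref{sec loja} are needed. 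Once the branch is isolated and its coefficients are controlled with constants $O(t^{-N})$, the argument is a verbatim repetition of the proof of Proposition \ref{pro loja1} with $m$ replaced by $s$, and the conclusion $\dist(x,y)\ge a\,t^N\dist(x^i,y^i)^{bs}$ follows.
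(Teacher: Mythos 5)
Your overall plan is on the right track: pass to the length metric $\rho$, isolate a local ramified covering of degree at most $s$ around a ball about $x$, apply the proof of Proposition~\ref{pro loja1} to that smaller covering, and track the degradation of the H\"older constants of the Weierstrass coefficients via Corollary~\ref{cor holder continuite local pour les c-holo}, which is where the factor $t^N$ arises. This matches the paper's architecture, including the observation (Remark~\ref{rem loja}) that with bounded degree the constant $a$ of Proposition~\ref{pro loja1} scales linearly in the H\"older constant.

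The genuine gap is the step you glide over with ``over $B$ the covering decomposes into sheets, each of which is a branch of multiplicity at most $s$.'' Having multiplicity $\le s$ at each point of $\pi^{-1}(x)$ is purely local and does \emph{not} by itself produce a decomposition over a ball whose radius is a uniform power of $t$: two distinct preimages $x^i\neq x^j$ can be extremely close to one another even when each has low multiplicity, and you would then need $B$ to be correspondingly small, with no control on how small. What is needed is a quantitative lower bound on the mutual separation of preimages of a point at distance $>t$ from $Z$, and this is exactly what you do not supply. The paper manufactures it with an auxiliary analytic set $Y\subset\Gamma^{s+1}$ parametrizing $(s+1)$-tuples of preimages of a common base point, together with the difference map $h:Y\to\mathbb C^{(s+1)^2}$, for which $h^{-1}(0)$ projects into $Z$; the Łojasiewicz inequality $\|h(z)\|\gtrsim\dist(\pi_1(z),Z)^M$ is precisely the quantitative input that forces the local covering over a ball of radius $\sim t^{Mbm}$ to have degree $\le s$ (otherwise $s+1$ preimages of $x$ would all lie inside the polydisc $H$ of radius $\sim t^M$, making $\|h\|$ too small). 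Your brief appeal to ``a Łojasiewicz inequality comparing $\dist(\cdot,Z)$ with the defining function of $Z$'' is aimed at a different target (the H\"older constants) and does not replace this construction. Until you supply some mechanism that bounds from below the separation of distinct preimages in terms of $\dist(x,Z)$, the reduction to a degree-$\le s$ covering over a $t$-sized ball, and hence the entire argument, does not go through.

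A secondary issue you correctly flag but do not resolve is matching the branch of $\widetilde y$ with that of $\widetilde x$; in the paper this is automatic once the ramified covering over $B\times H$ is isolated, because the security ring plus connectedness of $B$ force $\widetilde y$ into the same component of $\Gamma\cap(B\times H)$ as $\widetilde x$. This resolves itself once the main gap above is filled.
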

\begin{proof}
Let $t>0$ and $x\in X\cap \mathbb B_{1/2}$ with $\dist(x,Z)>t.$ We want to find a neighborhood $B\subset X$ of $x$ such that each component of $\Gamma\cap \pi^{-1}(B)$ defines a ramified covering of degree at most equal to $s$ over $B.$

We construct an analytic set $Y$ associate to the multiplicities of $\Gamma.$ Namely, first define $Y'\subset \Gamma^{s+1}$ by $\{z\in\Gamma^{s+1}\,|\, \pi\circ\tau_i(z)=\pi\circ\tau_j(z),\ 1\leq i,j\leq s+1\},$ where $\tau_i,$ $1\leq i\leq s+1,$ are the canonical projections of $\Gamma^{s+1}$ onto $\Gamma.$ For $i\neq j$ we set $A_{i,j}=\{z\in Y'\,|\, \tau_i(z)=\tau_j(z)\}.$ Then, $Y$ is defined by $\overline{Y'\setminus \cup_{i\neq j}A_{i,j}}.$ The map $\pi_1=\pi\circ\tau_1: Y\to X$ defines a ramified covering. If $x$ is generic in $X$ then a point $z\in Y$ over $x\in X$ represents a family of $(s+1)$ distinct points in $\pi^{-1}(x)\cap\Gamma.$

If $\pi'$ denotes the second projection of $\mathbb B\times \mathbb B$ onto $\mathbb B,$ consider the map $h:Y\to \mathbb C^{(s+1)^2}$ define by
$$h(z)=(\pi'\circ\tau_i(z)-\pi'\circ\tau_j(z))_{1\leq i,j\leq s+1}.$$
By construction, $h(z)=0$ means precisely that there is a point in $\Gamma$ with multiplicity greater than $s$ over $\pi_1(z).$ It implies that $\pi_1(h^{-1}(0))\subset Z.$ Hence, Theorem \ref{th iné de Loja classique pour les fonction} implies there is a constant $M>0$ such that if $\pi(z)\in X\cap \mathbb B_{1/2}$ then
\begin{equation}\label{eq dans loja2}
 \|h(z)\|\gtrsim\dist(z,h^{-1}(0))^M\gtrsim\dist(\pi_1(z),Z)^M.
\end{equation}

Let $a,b>0$ be the constants in Proposition \ref{pro loja1}. As in the proof of that proposition, we use the distance function $\rho$ on $X.$ Fix $\gamma>0$ small enough and set $B=\{z\in X\,|\, \rho(x,z)<\gamma t^{Mbm})\}.$ For $\widetilde x=(x,x')$ in $\Gamma,$ we can choose $2\leq l\leq 8m$ such that $\pi^{-1}(x)\cap\Gamma$ do not intersect the ring
$$r(l-2)\leq \|\widetilde x-w\|\leq r(l+2),$$
where $r=(a^{-1}\gamma)^{1/bm}t^M.$ Hence, if $H$ denotes the ball of center $x'$ and of radius $rl,$ we have $\dist(\pi^{-1}(x)\cap \Gamma,B\times\partial H)> r.$ Therefore, by Proposition \ref{pro loja1} $\Gamma\cap B\times\partial H=\varnothing.$ It assures that $\pi$ is proper on $\Gamma\cap B\times H$ and then defines a ramified covering.

Moreover, this covering has degree at most equal to $s.$ Otherwise, according to the radius of $H,$ we have
$$\min_{z\in\pi_1^{-1}(x)}\|h(z)\|\leq (s+1)^22rl,$$
which is in contradiction with \eqref{eq dans loja2} if $\gamma$ is small enough, since $\dist(x,Z)>t$ and $r=(a^{-1}\gamma)^{1/bm}t^M.$

Now, we want to apply Proposition \ref{pro loja1} to this covering. But, in order to control the constants, we have to reduce $B.$ Indeed, according to Remark \ref{rem loja}, the constants of that proposition depend only on the Hölder continuity of the coefficients $a_{j,l}$ (and on the degree of the covering). These coefficients are bounded continuous weakly holomorphic functions defined on $B$ then by Corollary \ref{cor holder continuite local pour les c-holo} with $L=\overline{X\cap\mathbb B_{3/4}},$ they are $(Kt^{-M'},\alpha)$-Hölder continuous on $B(x,t^{M'})$ for some $M'\geq1$ large enough, $0<\alpha\leq1$ and $K>0$ independent of $x$ and $t.$ Therefore, after a coordinates dilation by $t^{-M'}$ at $x,$ we can apply Proposition \ref{pro loja1} with the same exponent $b$ and the second constant proportional to some power of $t.$ Finally, let $y\in X.$ We can assume that $\rho(x,y)\leq t^{M'}/2,$ otherwise the proposition is obvious. Hence, the previous observation implies that
$$\pi^{-1}(x)\cap\Gamma\cap B\times H=\{x^1,\ldots,x^s\} \textrm{ and } \pi^{-1}(y)\cap\Gamma\cap B\times H=\{y^1,\ldots,y^s\},$$
with $\rho(x,y)\geq a't^N\dist(x^i,y^i)^{bs}$ where $a'>0,b\geq1$ and $N\geq1$ are independent of $x$ and $t.$ More precisely, we can write $N=M'+N',$ where the contribution in $t^{M'}$ comes from the dilation and that in $t^{N'}$ comes from the estimate on Hölder continuity. The construction can be applied to each component of $\Gamma\cap\pi^{-1}(B).$ This gives the result.
\end{proof}

We now consider the dynamical context. Assume that $X\subset\mathbb P^k$ is an irreducible analytic set of dimensions $l$ which is invariant by $f.$ Denote by $g$ the restriction of $f$ to $X.$ For $x$ in $X,$ we define \textit{the local multiplicity} of $g$ at $x$ as the maximal number of points in $g^{-1}(z)$ which are near $x$ for $z\in X$ close enough to $g(x).$ The local multiplicity is smaller than \textit{the topological degree} i.e. the number of points in $g^{-1}(x)$ for $x$ generic in $X.$ In our case, the topological degree of $g$ is equal to $d^l,$ see \cite{ds-equi}.

There exists a finite covering $(U_i)_{i\in I}$ of $X$ by open subsets of $\mathbb P^k$  such that $X\cap U_i$ can by decompose into locally irreducible components. Hence, we can apply Proposition \ref{pro loja2} to the graph of $g$ over each component of $X\cap U_i.$ It gives the following corollary.
\begin{corollary}\label{cor dist loja}
Let $\eta>1$ and $Z\subset X$ be a proper analytic subset. Assume that the local multiplicity of $g$ is less than $\eta$ outside $g^{-1}(Z).$ Then there are constants $a>0,$ $b\geq1$ and $N\geq1$ such that if $0<t<1$ and $x,y$ are two points outside $Z_t$ where $X$ is locally irreducible, then we can write
\begin{equation*}
 g^{-1}(x)=\{x^1,\ldots,x^{d^l}\}\textrm{ and } g^{-1}(y)=\{y^1,\ldots,y^{d^l}\}
\end{equation*}
with $\dist(x,y)\geq at^{N}\dist(x^j,y^j)^{b\eta}.$
\end{corollary}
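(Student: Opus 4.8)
The plan is to deduce Corollary \ref{cor dist loja} from Proposition \ref{pro loja2} by a patching argument over the finite covering $(U_i)_{i\in I}$. First I would fix, for each $i$, a decomposition of $X\cap U_i$ into finitely many locally irreducible components $X_{i,\nu}$, and consider the graph $\Gamma_{i,\nu}\subset X_{i,\nu}\times\mathbb P^k$ of $g$ restricted to $X_{i,\nu}$; after shrinking and rescaling the $U_i$ so that each factor embeds into $\mathbb B$, the first projection is a ramified covering of $\Gamma_{i,\nu}$ onto $X_{i,\nu}$ of some degree $m_{i,\nu}\leq d^l$. The hypothesis that the local multiplicity of $g$ is at most $\eta$ outside $g^{-1}(Z)$ translates into the multiplicity hypothesis of Proposition \ref{pro loja2} with $s=\lfloor\eta\rfloor$ (or simply $s$ the largest integer $<\eta$) and with the proper analytic subset there taken to be $Z\cap X_{i,\nu}$. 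Applying Proposition \ref{pro loja2} to each $(X_{i,\nu},\Gamma_{i,\nu})$ yields constants $a_{i,\nu}>0$, $b_{i,\nu}\geq 1$, $N_{i,\nu}\geq 1$; taking the minimum of the $a_{i,\nu}$, the maximum of the $b_{i,\nu}$ and $N_{i,\nu}$, and noting $b_{i,\nu}s\leq b\eta$ for suitable $b$, gives uniform constants valid in every chart.

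Next I would handle the globalization, which is where the real work lies: Proposition \ref{pro loja2} only compares two points $x,y$ lying in the \emph{same} locally irreducible piece $X_{i,\nu}\cap\mathbb B_{1/2}$ and \emph{close} to each other (the proof there assumes $\rho(x,y)\leq t^{M'}/2$). For arbitrary $x,y\in X\setminus Z_t$ at which $X$ is locally irreducible, I would use a Lebesgue-number/chaining argument: choose the covering fine enough that any ball $B_X(w,\delta(t))$ of some explicit polynomial-in-$t$ radius $\delta(t)=ct^{M'}$ is contained in one chart $U_i$ and in one locally irreducible component there, then join $x$ to $y$ by a short path in $X$ (using the path-distance $\rho$ and Theorem \ref{th relation entre les distances sur X} to pass between $\rho$ and $\dist$) subdivided into $O(\delta(t)^{-1})$ consecutive points $w_0=x,w_1,\dots,w_M=y$ with $\rho(w_{j-1},w_j)<\delta(t)$. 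At each step Proposition \ref{pro loja2} labels the fibers $g^{-1}(w_{j-1})$ and $g^{-1}(w_j)$ compatibly and gives $\dist(w_{j-1},w_j)\geq a't^{N'}\dist(w_{j-1}^p,w_j^p)^{b\eta}$; a delicate point is that these local labelings must be shown to glue into a \emph{single} global labeling of $g^{-1}(x)$ and $g^{-1}(y)$ along the chain (this is exactly where one needs the points $w_j$ to stay away from the branch locus, i.e. away from $Z_t$, so that the covering is locally a trivial $d^l$-sheeted cover and continuation is unambiguous), and also that the branches $w_j^p$ remain in the single component $\Gamma_{i,\nu}$ so the estimate applies with exponent $b\eta$ rather than the larger $bm$.

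Then I would accumulate the chain estimates. From $\dist(w_{j-1}^p,w_j^p)\leq(a'^{-1}t^{-N'}\dist(w_{j-1},w_j))^{1/b\eta}\lesssim t^{-N'/b\eta}\delta(t)^{1/b\eta}$ and summing over the $M\lesssim\delta(t)^{-1}$ steps, one gets $\dist(x^p,y^p)\lesssim\delta(t)^{-1}t^{-N'/b\eta}\delta(t)^{1/b\eta}$, which after inserting $\delta(t)=ct^{M'}$ is a fixed power $t^{-N''}$ times $\dist(x,y)$ to a positive power — rearranging yields $\dist(x,y)\geq at^N\dist(x^j,y^j)^{b\eta}$ with $N$ a suitable enlargement of $N'+N''$ absorbing all the accumulated powers of $t$, and with $b$ possibly increased. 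The main obstacle, as indicated, is the consistency of the global labeling of the $d^l$ preimages along the path together with keeping the pairing of branches inside one locally irreducible component at each link, so that one never pays the exponent $bm$ (with $m$ the covering degree) instead of $b\eta$; this is handled by choosing the covering and the path subdivision fine enough and by exploiting that the whole path avoids $Z_t$, where $g$ is an unramified $d^l$-cover onto $X_{Reg}$ and the local multiplicities are $\leq\eta$.
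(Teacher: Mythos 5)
Your proposal diverges from the paper's argument in the key gluing step. The paper does not chain: it covers $X$ by finitely many charts $U_i$ in which $X\cap U_i$ splits into locally irreducible pieces, applies Proposition~\ref{pro loja2} once to the graph of $g$ over each such piece, and takes the worst of the finitely many resulting constants. The case of $x,y$ far apart (in particular, in different charts) is trivial: once $\dist(x,y)$ is larger than a Lebesgue number $\epsilon_0$ of the cover, any labeling of the fibers satisfies $\dist(x,y)\geq at^N\dist(x^j,y^j)^{b\eta}$ by choosing $a\leq\epsilon_0/\mathrm{diam}(\mathbb P^k)^{b\eta}$. This built-in dichotomy (the proof of Proposition~\ref{pro loja2} already reduces to $\rho(x,y)\leq t^{M'}/2$ and dismisses the rest as obvious) is precisely what makes chaining unnecessary.

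Your chaining version, besides being heavier than needed, has a genuine gap: you assert without justification that the path from $x$ to $y$ can be kept outside $Z_t$, so that Proposition~\ref{pro loja2} applies at each link. There is no reason for this — $X\setminus Z_t$ need not even be path-connected, and even when it is, a shortest path may cut through $Z_t$ while $\dist(x,y)$ remains small (e.g.\ if $\dist(x,Z)$ and $\dist(y,Z)$ are only barely larger than $t$). Without this the per-step exponent degrades from $b\eta$ to $bd^l$, which defeats the purpose. A second inaccuracy: away from $Z_t$ the multiplicity of $g$ is merely $<\eta$, not $1$, so $g$ is \emph{not} a locally trivial $d^l$-sheeted cover there and ``unambiguous continuation'' is not available; fortunately you do not actually need it — Proposition~\ref{pro loja2} already produces a bijective pairing at each link, and the composition of bijections is a bijection, so the global-labeling worry is moot. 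Finally, as written your summation $\dist(x^p,y^p)\lesssim\delta(t)^{-1}t^{-N'/b\eta}\delta(t)^{1/b\eta}$ has dropped the $\rho(x,y)$-dependence (you bounded each step by $\delta(t)$ and the number of steps by $\delta(t)^{-1}$, which erases $\dist(x,y)$); to salvage a positive power of $\dist(x,y)$ on the right you must take $M\simeq\rho(x,y)/\delta(t)$ and keep $\rho(x,y)/M$ per step, not the worst-case $\delta(t)$.
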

From this, we obtain the following size estimate for image of balls which is crucial in the proof of our main result.
\begin{corollary}\label{cor loja}
 Let $\delta>1$ and $E\subset X$ be a proper analytic subset. Denote by $\widetilde E$ the preimage of $E$ by $g.$ Assume that the local multiplicity is less than $\delta$ outside $\widetilde E.$ There exist constants $0<A\leq1,$ $b\geq1$ and $N\geq 1$ such that if $0<t\leq1/2,$ $r<t/2$ and $x\in X\setminus \widetilde E_t,$ then $g(B_{X}(x,r))$ contains a ball of radius $At^Nr^{b\delta}.$ Moreover, $b$ depends only on $X.$
\end{corollary}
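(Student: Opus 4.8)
The plan is to deduce this from the covering estimate behind Corollary \ref{cor dist loja}, i.e. Proposition \ref{pro loja2} applied with $\eta=\delta$ and $Z=E$: the point is that if $y$ is close enough to $g(x)$ then $y$ has a preimage by $g$ inside $B_X(x,r)$. First I record two easy facts. Since $r<t$, the ball $B_X(x,r)$ is disjoint from $\widetilde E=g^{-1}(E)$, so $g(B_X(x,r))$ is open in $X$ and avoids $E$; moreover the subanalytic functions $z\mapsto\dist(g(z),E)$ and $z\mapsto\dist(z,\widetilde E)$ have the same zero set, so Theorem \ref{th in� de Loja classique pour les fonction} gives an $M\geq1$ with $\dist(g(x),E)\gtrsim\dist(x,\widetilde E)^{M}\gtrsim t^{M}$. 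Hence every point in the ball of center $w:=g(x)$ and radius $\tfrac12\dist(g(x),E)$ stays outside $E_{\tau}$ for $\tau:=\tfrac12\dist(g(x),E)\gtrsim t^{M}$.

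Next, the preimage estimate. I would choose a small chart $U$ around $x$ in which $X$ decomposes into locally irreducible components $X_{1},\dots,X_{p}$ through $x$ (as in the proof of Corollary \ref{cor dist loja}) and fix one of them, say $X_{1}$; since $g$ is finite, $g$ maps the germ $(X_{1},x)$ onto a single locally irreducible component $X'$ of $X$ at $w$. Apply Proposition \ref{pro loja2} to the ramified covering $\pi\colon\Gamma\to X'$, where $\Gamma$ is the graph of $g|_{X_{1}}$ over $X'$ and $\pi$ the first projection; its sheet multiplicities over $X'\setminus E$ are $<\delta$ because the local multiplicity of $g$ is $<\delta$ outside $g^{-1}(E)$. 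Taking the parameter there equal to $\tau\sim t^{M}$ and noting that $(w,x)$ is a point of $\pi^{-1}(w)\cap\Gamma$, the proposition pairs it with a point $(y,y')\in\pi^{-1}(y)\cap\Gamma$ satisfying $\dist(w,y)\geq a\tau^{N}\dist(x,y')^{b\delta}$. Working with the path distance $\rho$ on $X$ (as in the proof of Proposition \ref{pro loja1}) and Theorem \ref{th relation entre les distances sur X} to pass between $\rho$ and the ambient distance, one gets $\rho(x,y')<r/2$, hence $y'\in B_{X_{1}}(x,r)\subseteq B_X(x,r)$, whenever $\dist(w,y)<\rho_{0}:=c\,t^{MN}r^{b'\delta}$, for suitable $c>0$ and $b'\geq1$ depending only on $X$ (it only combines the $b$ of Proposition \ref{pro loja1} for $X'$ with the exponent of Theorem \ref{th relation entre les distances sur X}). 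Together with the first paragraph this gives $g(B_X(x,r))\supseteq B_{X'}(w,\rho_{1})$ with $\rho_{1}:=\min(\rho_{0},\tau)\gtrsim t^{MN}r^{b'\delta}$.

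It remains to extract a genuine ball of $X$ from $B_{X'}(w,\rho_{1})$: this ball lies in the single component $X'$, but the non-locally-irreducible locus $\Sigma$ of $X$ (a proper analytic subset) may pass through $w$. Here I would use a volume count as in the second part of the proof of Lemma \ref{le vol de ball par eclat}: $\Sigma\cap X'$ has dimension $\leq l-1$, so for $\sigma\lesssim\rho_{1}$ its $\sigma$-tube meets $B_{X'}(w,\rho_{1}/2)$ in volume $\lesssim\sigma^{2}\rho_{1}^{2l-2}\ll\rho_{1}^{2l}$; hence some $y_{0}\in B_{X'}(w,\rho_{1}/2)$ has $\dist(y_{0},\Sigma)\gtrsim\rho_{1}$. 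Since $X$ is locally irreducible away from $\Sigma$, for a small enough $c'>0$ the ball $B_X(y_{0},c'\rho_{1})$ is a genuine ball of $X$, equal to $B_{X'}(y_{0},c'\rho_{1})\subseteq B_{X'}(w,\rho_{1})\subseteq g(B_X(x,r))$, of radius $\gtrsim t^{MN}r^{b'\delta}$. Setting $N:=MN$, $b:=b'$ and $A$ equal to the accumulated constant finishes the proof.

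I expect the real obstacle to be this last reduction near the non-locally-irreducible locus: one must end with an exponent of the form $b\delta$ on $r$ with $b$ depending only on $X$, and the danger is that a Lojasiewicz exponent attached to $g$ or to $g^{-1}(\Sigma)$ leaks into that $r$-power. The way around it is precisely to handle $\Sigma$ by the local volume count \emph{inside} the ball $B_{X'}(w,\rho_{1})$ already produced by the covering estimate, where the room one needs is comparable to $\rho_{1}$ itself, rather than by first pushing $x$ away from $g^{-1}(\Sigma)$.
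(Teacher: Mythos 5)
Your argument is correct and shares the core Lojasiewicz machinery with the paper, but the mechanism that actually produces the ball is genuinely different. The paper first shrinks $r$ to $cr^l$ (a volume count as in Lemma~\ref{le vol de ball par eclat} with $\widehat Y=X_{Sing}\cup g^{-1}(X_{Sing})$) so that $B_X(x,r)$ and its image lie in $X_{Reg}$; it then chooses a ``security ring'' radius $r'\sim r$ keeping $\partial B_X(x,r')$ at distance $\gtrsim r$ from $g^{-1}(g(x))$, applies Corollary~\ref{cor dist loja} twice (with $\eta=d^l,$ $Z=\varnothing$ to push $g(\overline{B_X(x,r')})$ out of a tube of size $\sim t^{bd^l}$ around $E,$ then with $\eta=\delta,$ $Z=E$) to bound $\dist\bigl(g(\partial B_X(x,r')),g(x)\bigr)$ from below, and concludes by openness of $g$ on $X_{Reg}.$ You instead get the distance-from-$E$ bound directly from the classical Lojasiewicz inequality for $\dist(g(\cdot),E)$ versus $\dist(\cdot,\widetilde E),$ use Proposition~\ref{pro loja2} on the graph covering to \emph{produce actual preimages} of points near $g(x),$ and only then carve a genuine ball of $X$ out of $B_{X'}(g(x),\rho_1)$ by a local volume count against the non-locally-irreducible locus $\Sigma.$ Both routes keep the $r$-exponent of the form $b\delta$ with $b$ depending only on $X$: in the paper because the initial reduction costs only a factor $l=\dim X$ in the exponent, in yours because the room needed to dodge $\Sigma$ is of the same order as the radius $\rho_1$ you have already produced. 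Your version is more constructive and needs no security ring; the paper's is cleaner in that placing everything in $X_{Reg}$ at the start removes the bookkeeping over locally irreducible branches and over the graph covering. Two small points worth making explicit in your write-up: passing from $\dist(x,y')\lesssim r$ to $\rho(x,y')<r/2$ via Theorem~\ref{th relation entre les distances sur X} forces $b'$ to absorb the exponent of that comparison (you flag this, but it is easy to miss since $\rho\lesssim\dist^\gamma$ with $\gamma\leq 1$ goes the ``wrong'' way when $r<1$); and the uniformity in $x$ of the constants of Proposition~\ref{pro loja2} over the various local decompositions of $X$ is exactly what the finite-cover argument behind Corollary~\ref{cor dist loja} supplies, so it is cleaner to invoke that corollary directly rather than Proposition~\ref{pro loja2} pointwise.
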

\begin{proof}
Fix $t>0$ and $r<t/2.$ As in the proof of Lemma \ref{le vol de ball par eclat} with $\widehat Y=X_{Sing}\cup g^{-1}(X_{Sing}),$ possibly after replacing $r$ by $cr^l$ for some $c>0,$ we can assume that $B_{X}(x,r)$ and $g(B_{X}(x,r))$ are contained in $X_{Reg}.$ The local multiplicity of $f$ on $X$ is bounded $d^{l}.$ So, there exists $2\leq i\leq 4d^{l},$ such that the ring $\{\frac{r(i-1)}{4d^{l}+1}\leq \dist(x,x')\leq\frac{r(i+1)}{4d^{l}+1}\}$ contains no preimage of $g(x).$ Thus, if $x'\in\partial B_{X}(x,r\frac{i}{4d^{l}+1}),$ then 
\begin{equation*}
 \dist(x',g^{-1}(g(x)))\geq \frac{r}{4d^{l}+1}.
\end{equation*}
Moreover, we can apply Corollary \ref{cor dist loja} with $\eta=d^{l}$ and $Z=\varnothing.$ Hence, there exists $a,b>0$ such that $g(\overline{B_{X}(x,r\frac{i}{4d^{l}+1})})\subset X\setminus E_{a(t/2)^{bd^{l}}}.$ Therefore, we can apply once again Corollary \ref{cor dist loja} with $\eta=\delta,$ $Z=E$ and $a(t/2)^{bd^l}$ instead of $t.$ We get, for some constants $a'>0$ and $N_0\geq 1$
\begin{equation*}
 \dist(g(x'),g(x))\geq a'\left(a\left(\frac{t}{2}\right)^{bd^{l}}\right)^{N_0}\left(\frac{r}{4d^{l}+1}\right)^{b\delta},
\end{equation*}
and, since $g$ is an open mapping near $x$
\begin{equation*}
B_{X}(g(x),At^Nr^{b\delta})\subset g(B_{X}(x,r))
\end{equation*}
with $A=\frac{a^{N_0}a'}{2^{N_0bd^{l}}(4d^{l}+1)^{b\delta}}$ and $N=N_0bd^{l}.$
\end{proof}
\begin{remark}
 When $X$ is smooth, the ball in $g(B_X(x,r))$ can be chosen centered at $g(x).$
\end{remark}

\section{Psh functions and exponential estimates}\label{sec ho}
We refer to \cite{de-book} for basics on currents and plurisubharmonic (psh for short) functions. Let $T$ be a positive closed $(1,1)$-current of mass $1$ on $\mathbb P^k$ with continuous local potentials. Let us recall briefly the associated notions of psh and weakly psh (wpsh for short) modulo $T$ functions introduced in \cite{ds-equi}. 

Let $Y$ be an analytic space. A function $v:Y\to \mathbb R\cup\{-\infty\}$ is \textit{wpsh} if it is psh on $Y_{Reg}$ and for $y$ in $Y,$ we have $v(y)=\limsup v(z)$ with $z\in Y_{Reg}$ and $z\to y.$ These functions coincide with psh functions if $Y$ is smooth. On compact spaces, the notion is very restrictive. However, if $X$ is an analytic subset of $\mathbb P^k,$ we have the more flexible notion of \textit{wpsh modulo $T$ function} on $X.$ Locally, it is the difference of a wpsh function on $X$ and a potential of $T.$ If $X$ is smooth, we say that the function is psh modulo $T.$

Note that the restriction of a psh modulo $T$ function to an analytic subset is either wpsh modulo $T$ or equal to $-\infty$ on an irreducible component. If $u$ is wpsh modulo $T$ on $X$ then $dd^c(u[X])+T\wedge[X]$ is a positive closed current supported on $X.$ On the other hand, if $S$ is a positive closed $(1,1)$-current on $\mathbb P^k$ with mass $1,$ there is a psh modulo $T$ function $u$ on $\mathbb P^k,$ unique up to a constant, such that $S=dd^cu+T.$

These notions bring good compactness properties which permit to obtain uniform estimates. We have the following statements established in \cite{ds-equi}.
\begin{proposition}
 Let $(u_n)$ be a sequence of wpsh modulo $T$ functions on $X,$ uniformly bounded from above. Then there is a subsequence $(u_{n_i})$ satisfying one of the following properties:
\begin{itemize}
\item There is an irreducible component $Y$ of $X$ such that $(u_{n_i})$ converges uniformly to $-\infty$ on $Y\setminus X_{Sing}.$
\item $(u_{n_i})$ converges in $L^p(X)$ to a wpsh modulo $T$ function $u$ for every $1\leq p<+\infty.$
\end{itemize}
In the last case, $\limsup u_{n_i}\leq u$ on $X$ with equality almost everywhere.
\end{proposition}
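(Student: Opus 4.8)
The plan is to reduce the statement to the classical Hartogs-type compactness dichotomy for plurisubharmonic functions on a connected complex manifold, and then to transport the conclusion back to the possibly singular set $X$. Concretely: write $X=\bigcup_j Y_j$ as its (finite) decomposition into irreducible components and, for each $j$, fix a desingularization $\pi_j:\widehat Y_j\to Y_j$ obtained from a finite sequence of blow-ups, so that $\widehat Y_j$ is a connected compact K\"ahler manifold with K\"ahler form $\widehat\omega_j$ and $\pi_j$ is a biholomorphism from $\widehat Y_j\setminus\pi_j^{-1}((Y_j)_{Sing})$ onto $Y_j\setminus(Y_j)_{Sing}$. Since $T$ has continuous local potentials, in any chart $U$ with $T=dd^cp$, $p$ continuous, the function $u_n\circ\pi_j+p\circ\pi_j$ is psh on $\pi_j^{-1}(U)$ off the analytic set $\pi_j^{-1}((Y_j)_{Sing})$ and is locally bounded above (hypothesis plus continuity of $p$); hence it extends through that analytic set by the removable singularity theorem for psh functions bounded above. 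Gluing these local models, $u_n\circ\pi_j$ is a psh modulo $\pi_j^*T$ function on $\widehat Y_j$, and the family is uniformly bounded from above.

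Next I would apply the standard dichotomy on each connected smooth $\widehat Y_j$: covering $\widehat Y_j$ by finitely many charts trivializing $\pi_j^*T$ and using Hartogs' lemma together with a diagonal extraction (over the finitely many charts and the finitely many components), I obtain a subsequence $(u_{n_i})$ such that, for each $j$, either $(u_{n_i}\circ\pi_j)$ tends to $-\infty$ uniformly on every compact of $\widehat Y_j$, or $(u_{n_i}\circ\pi_j)$ is bounded in $L^1(\widehat Y_j)$ and converges in $L^1$ to a psh modulo $\pi_j^*T$ function $\widehat u^{(j)}$, with $\limsup(u_{n_i}\circ\pi_j)\le\widehat u^{(j)}$ everywhere and equality off a pluripolar set. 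In the second case the convergence is in fact in $L^p$ for every $1\le p<\infty$: the $L^1$ bound gives, via the exponential estimates of Section~\ref{sec ho} (Skoda-type uniform integrability of $e^{-\varepsilon(u_{n_i}\circ\pi_j)}$), a uniform bound on $\int|u_{n_i}\circ\pi_j|^p$, hence uniform integrability, which upgrades $L^1$ convergence to $L^p$ convergence.

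Finally I descend. If some $\widehat Y_j$ falls in the first alternative, then since $\pi_j$ is a biholomorphism onto $Y_j\setminus(Y_j)_{Sing}\supset Y_j\setminus X_{Sing}$ (the inclusion because $(Y_j)_{Sing}\subset X_{Sing}$), the sequence $(u_{n_i})$ tends to $-\infty$ uniformly on compacts of $Y_j\setminus X_{Sing}$, which is the first bullet with $Y=Y_j$. Otherwise every component is in the second alternative; pushing forward $\widehat u^{(j)}$ by $\pi_j$ on $Y_j\setminus(Y_j)_{Sing}$ and setting $u(x):=\limsup_{z\to x,\,z\in X_{Reg}}u(z)$ on $X_{Sing}$ defines a wpsh modulo $T$ function $u$ on $X$. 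For the $L^p(X,\omega^{\dim X})$ convergence one argues component by component: $\pi_j$ is a biholomorphism off sets of measure zero and $\pi_j^*\omega^{\dim X}$ is dominated by the smooth volume $\widehat\omega_j^{\dim\widehat Y_j}$, so $L^p(\widehat Y_j)$ convergence of $u_{n_i}\circ\pi_j$ yields $\|u_{n_i}-u\|_{L^p(Y_j,\omega^{\dim X})}\to0$; summing over $j$ and noting $X_{Sing}$ is $\omega^{\dim X}$-negligible gives the claim. The last assertion, $\limsup u_{n_i}\le u$ with equality a.e., is inherited from the corresponding property upstairs, since the exceptional set downstairs is contained in the (negligible) image of a pluripolar set together with $X_{Sing}$.

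The main obstacle is the interface between $X$ and its desingularization: one must be sure that psh modulo $T$ functions lift to genuine psh modulo $\pi^*T$ functions across the exceptional divisors (removability for psh functions bounded above), that the relevant $L^p$ norms are comparable under $\pi$ — the type of control provided by Lemma~\ref{le remont� d'estim�e par surjection} and by the Lelong volume estimates used in Lemma~\ref{le vol de ball par eclat} — and that the dichotomy is ``all or nothing'' on each irreducible component, which is exactly where the connectedness of $\widehat Y_j$ is used. The secondary technical point is the passage from $L^1$ to $L^p$ convergence, which relies on the uniform integrability furnished by the exponential estimates of Section~\ref{sec ho}.
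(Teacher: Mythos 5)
The paper does not prove this proposition itself; it simply cites Dinh--Sibony \cite{ds-equi}, so there is no in-paper argument to compare against. Your route --- lift $u_n$ to a desingularization $\widehat Y_j$ of each irreducible component, invoke the classical Hartogs dichotomy for psh (modulo $\pi_j^*T$) functions on the smooth compact connected model, then push the conclusion back down --- is a correct and natural way to prove the statement, and it is consistent with the machinery the rest of the paper sets up (Lemmas \ref{le vol de ball par eclat}, \ref{le remont\'e d'estim\'ee par surjection}, Section \ref{sec ho}).

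Two small points are worth tightening. First, a wpsh modulo $T$ function on $X$ is only guaranteed to be psh (modulo $T$) on $X_{Reg}$, so on $\widehat Y_j$ you know $u_n\circ\pi_j + p\circ\pi_j$ is psh off $\pi_j^{-1}(Y_j\cap X_{Sing})$, which is larger than $\pi_j^{-1}((Y_j)_{Sing})$ (it also includes the preimages of the intersections with the other components). This is harmless --- it is still an analytic subset of $\widehat Y_j$, so the removable--singularity extension across a pluripolar set applies unchanged --- but the set you wrote is not quite the right one. Second, the extension upstairs agrees with $u_n\circ\pi_j$ only off the exceptional locus; on a positive-dimensional fiber $\pi_j^{-1}(x)$ the lifted function need not be constant and $u_n(x)$ equals the limsup of the lift near that fiber rather than a pointwise value, so when you deduce $\limsup u_{n_i}\le u$ at $x\in X_{Sing}$ you need the Hartogs inequality uniformly over the compact fiber (i.e. the version of Hartogs' lemma that bounds $\sup_K v_j$ by $\sup_K v+\epsilon$ for $j$ large), not just the pointwise $\limsup$ bound. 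Since $\widehat Y_j$ is compact this uniform form is available, but it deserves to be said explicitly. The $L^1\Rightarrow L^p$ upgrade and the descent of norms via $\pi_j^*\omega^{\dim X}\lesssim\widehat\omega_j^{\dim\widehat Y_j}$ are both fine, and the convergence to $-\infty$ in the first alternative is in fact uniform on all of $Y_j$ (not merely on compacts of $Y_j\setminus X_{Sing}$) precisely because $\widehat Y_j$ is compact.
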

It implies the following lemma.
\begin{lemma}\label{le wpsh borne sur ss ens}
 Let $\cali G$ be a family of psh modulo $T$ functions on $\mathbb P^k$ uniformly bounded from above. Assume that each irreducible component of $X$ contains an analytic subset $Y$ such that the restriction of $\cali G$ to $Y$ is bounded in $L^1(Y).$ Then, the restriction of $\cali G$ to $X$ is bounded in $L^1(X).$
\end{lemma}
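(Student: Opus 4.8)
The plan is to argue by contradiction, exploiting the dichotomy of the Proposition preceding the statement. First I would reduce to the case where $X$ is irreducible. An analytic subset of $\mathbb P^k$ has finitely many irreducible components $X_1,\dots,X_r$, and $\|v\|_{L^1(X)}$ is comparable to $\sum_j\|v\|_{L^1(X_j)}$; hence $\cali G|_X$ is bounded in $L^1(X)$ if and only if $\cali G|_{X_j}$ is bounded in $L^1(X_j)$ for each $j$. Moreover each $X_j$ contains, by hypothesis, an analytic subset $Y$ on which $\cali G$ is bounded in $L^1(Y)$. Finally, the restriction of a psh modulo $T$ function to $X_j$ is either wpsh modulo $T$ or $\equiv-\infty$ on $X_j$; the latter is impossible here since the restriction to $Y\subset X_j$ lies in $L^1(Y)$, hence is finite almost everywhere. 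So the Proposition does apply on each $X_j$, and it suffices to treat a single irreducible $X$.

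Assume then that $X$ is irreducible and, for a contradiction, that there is a sequence $(u_n)\subset\cali G$ with $\|u_n\|_{L^1(X)}\to+\infty$. The restrictions $u_n|_X$ form a sequence of wpsh modulo $T$ functions on $X$ (none being $\equiv-\infty$, by the reduction above) that is uniformly bounded from above, so by the Proposition some subsequence $(u_{n_i})$ satisfies one of its two alternatives. In the second alternative, $(u_{n_i})$ converges in $L^1(X)$ to a wpsh modulo $T$ function $u$, so $\|u_{n_i}\|_{L^1(X)}\to\|u\|_{L^1(X)}<+\infty$, contradicting the choice of $(u_n)$. Hence we are in the first alternative: since $X$ is its own only irreducible component, $(u_{n_i})$ converges uniformly to $-\infty$ on $X\setminus X_{Sing}=X_{Reg}$, say $u_{n_i}\le -M_i$ on $X_{Reg}$ with $M_i\to+\infty$.

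The one point needing care is that the distinguished subset $Y$ may well be contained in $X_{Sing}$, so the uniform bound on $X_{Reg}$ does not obviously control $\|u_{n_i}\|_{L^1(Y)}$. Here I would invoke the defining property of wpsh modulo $T$ functions together with the continuity of the local potentials of $T$: writing $u_{n_i}=w_{n_i}-p$ locally near a point $y\in X$, with $w_{n_i}$ wpsh on $X$ and $p$ a continuous local potential of $T$, the identity $w_{n_i}(y)=\limsup_{z\to y,\ z\in X_{Reg}}w_{n_i}(z)$ and the continuity of $p$ give $u_{n_i}(y)=\limsup_{z\to y,\ z\in X_{Reg}}u_{n_i}(z)\le -M_i$. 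Thus $u_{n_i}\le -M_i$ on all of $X$, and in particular $\|u_{n_i}\|_{L^1(Y)}\ge M_i\int_Y\omega^{\dim Y}$. Since $Y$ is a nonempty analytic set this last integral is strictly positive, so $\|u_{n_i}\|_{L^1(Y)}\to+\infty$, contradicting the boundedness of $\cali G|_Y$ in $L^1(Y)$. This contradiction proves the lemma; the main obstacle, as indicated, is precisely this passage from $X_{Reg}$ to the possibly singular set $Y$, resolved by the continuity of the potentials of $T$.
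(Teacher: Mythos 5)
Your proof is correct and is precisely the derivation the paper has in mind: the paper offers no explicit argument, merely remarking that the preceding compactness/dichotomy Proposition ``implies'' the lemma, and your reduction to irreducible $X$ followed by a contradiction via the two alternatives is the natural way to make that implication explicit. In particular, you correctly isolate the one genuinely nontrivial point, namely that the distinguished subset $Y$ may lie inside $X_{Sing}$ so that the uniform bound on $X_{Reg}$ does not a priori control $\|u_{n_i}\|_{L^1(Y)}$, and you resolve it with the right tool: the defining $\limsup$ identity for wpsh functions on $X$ together with continuity of the local potentials of $T$, which pushes the bound $u_{n_i}\le -M_i$ from $X_{Reg}$ to all of $X$, hence to $Y_{Reg}$ and into the $L^1(Y)$ norm.
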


A classical result of Hörmander \cite{ho-book} gives a uniform bound to $\exp(-u)$ in $L^1(\mathbb B_{1/2})$ for $u$ in a class of psh functions in the unit ball of $\mathbb C^k.$ Similar estimates can be obtained for compact families of quasi-psh functions. From now, we assume that $T$ has $(K,\alpha)$-Hölder continuous local potentials, with $0<\alpha\leq 1$ and $K>0.$ In the rest of this section, we establish exponential estimates for psh modulo $T$ functions in different situations. A key observation in our approach is that Hölder continuity allows us to work with non-compact families. In the sequel, we will apply these estimates to $T$ the Green current associate to $f.$ They allow us to control the volume of some sublevel sets of potentials of currents near exceptional sets.

As a consequence of classical Hörmander's estimate, we have the following lemma which will be of constant use. Here, $\nu$ denotes the standard volume form on $\mathbb C^k$ and $T$ is seen as a fixed current on the unit ball $\mathbb B$ of $\mathbb C^k.$ We assume that its admits a potential $g$ which is $(K,\alpha)$-Hölder continuous on $\mathbb B.$
\begin{lemma}\label{le Ho1: avec valeur centrale}
 Let $v$ be a psh modulo $T$ function in $\mathbb B_t$ with $v\leq 0$ and $v(0)>-\infty.$ Let $0<s<-v(0)^{-1}$ and $t>0$ such that $Kt^\alpha\leq s^{-1}.$ There is a constant $c>0$ independent of $v,$ $s$ and $t$ such that
\begin{equation}\label{eq Ho1: avec valeur centrale}
 \int_{\mathbb B_{t/2}}\exp(-\frac{sv}{2})\nu\leq ct^{2k}.
\end{equation}
\end{lemma}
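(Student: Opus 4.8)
The plan is to reduce the statement to the classical Hörmander exponential estimate for plain psh functions by absorbing the current $T$ into its Hölder potential and rescaling to the unit ball. First I would write, locally on $\mathbb B_t$, the psh modulo $T$ function as $v = \tilde v - g$, where $\tilde v$ is genuinely psh on $\mathbb B_t$ and $g$ is the fixed $(K,\alpha)$-Hölder potential of $T$. Since $g$ is bounded on $\mathbb B$ (its oscillation on $\mathbb B_t$ is at most $K t^\alpha \le s^{-1}$), the functions $v$ and $\tilde v$ differ by a bounded quantity, and the hypotheses $v \le 0$, $v(0) > -\infty$ translate into $\tilde v \le \|g\|_\infty$ and $\tilde v(0) > -\infty$ with a controlled value; similarly the condition $Kt^\alpha \le s^{-1}$ controls the extra factor $\exp(sg/2)$ on $\mathbb B_{t/2}$ by a universal constant. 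So it suffices to prove the estimate for the psh function $\tilde v$, suitably normalized.

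Next I would rescale: set $w(\zeta) := \tilde v(t\zeta)$ for $\zeta \in \mathbb B$, which is psh on $\mathbb B$, bounded above by a constant, with $w(0) = \tilde v(0) > -\infty$. The change of variables turns $\int_{\mathbb B_{t/2}} \exp(-\tfrac{s}{2}\tilde v)\,\nu$ into $t^{2k}\int_{\mathbb B_{1/2}} \exp(-\tfrac{s}{2} w)\,\nu$, which explains the $t^{2k}$ on the right-hand side. It remains to bound $\int_{\mathbb B_{1/2}} \exp(-\tfrac{s}{2} w)\,\nu$ by a universal constant. Here the key point is that the exponent we must control is $s/2$ with $s < -v(0)^{-1} \approx -\tilde v(0)^{-1}$, i.e. $\tfrac{s}{2}(-\tilde v(0)) < \tfrac12$, so after normalizing $w$ to have value $-1$ at the origin (dividing by $|w(0)|$) the relevant exponent is $< \tfrac12 < 1$, strictly below the Hörmander threshold. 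This is exactly the regime where Hörmander's estimate \cite{ho-book} gives a bound on $\int_{\mathbb B_{1/2}} \exp(-\theta w')\,\nu$ that is uniform over all psh $w'$ with $w'(0) \ge -1$ and $w' \le A$ (the upper bound $A$ only affecting the constant), for any fixed $\theta < 1$.

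The main obstacle, and the reason one cannot simply quote Hörmander verbatim, is the uniformity: the family of $v$'s is non-compact because both $t$ and $s$ vary, and $\tilde v$ is not normalized. The Hölder hypothesis on $g$ is precisely what makes the normalization costs ($\exp(sg/2)$ and the shift between $v$ and $\tilde v$) uniformly bounded once $Kt^\alpha \le s^{-1}$; without it these terms would blow up as $t \to 0$. So the argument is: (i) split off $g$ and bound its contribution using $Kt^\alpha \le s^{-1}$; (ii) rescale $\mathbb B_t \to \mathbb B$, producing the $t^{2k}$ factor; (iii) normalize the psh function at the origin, so the exponent drops below $1$ by the hypothesis $s < -v(0)^{-1}$; (iv) apply the classical Hörmander estimate on $\mathbb B_{1/2}$, whose constant depends only on $k$ and the (universal) threshold gap, to conclude. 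Tracking the dependence of the final constant $c$ on $K$, $\alpha$ and $k$ alone — and not on $v$, $s$, $t$ — is the bookkeeping that needs care but presents no genuine difficulty.
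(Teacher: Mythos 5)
Your overall strategy matches the paper's: split $v = v' - g$ with $v'$ psh and $g$ the H\"older potential of $T$, rescale $\mathbb B_t \to \mathbb B$ to produce the $t^{2k}$ factor, and invoke H\"ormander's exponential estimate. However, there is a genuine gap in how you handle the constant $g(0)$, and two of your steps contradict each other unless it is fixed.

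You claim that ``the condition $Kt^\alpha \le s^{-1}$ controls the extra factor $\exp(sg/2)$ on $\mathbb B_{t/2}$ by a universal constant.'' This is false: that condition only bounds the \emph{oscillation} $\exp\bigl(s(g(z)-g(0))/2\bigr)$; the remaining factor $\exp(sg(0)/2)$ is not universally bounded, because $s$ is unbounded (as $t\to 0$, $s$ can grow like $t^{-\alpha}$), and $g(0)$ is a fixed constant of either sign. Likewise, in step (iii) you normalize by $|w(0)| = |\tilde v(0)|$ and invoke $s < -v(0)^{-1} \approx -\tilde v(0)^{-1}$; but $\tilde v(0) = v(0) + g(0)$, and the approximation $\tilde v(0)\approx v(0)$ is invalid precisely when $|g(0)| \gg |v(0)| \sim s^{-1}$, i.e.\ for the large $s$ you must handle. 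So your exponent can exceed $1$, or your psh function can fail to be bounded above by $0$, depending on the sign of $g(0)$.

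These two errors in fact cancel if the bookkeeping is done correctly, and the paper's proof does exactly this by applying H\"ormander not to $v'$ itself but to the shifted function $\widetilde v := v' - g(0) - Kt^\alpha$. The one-sided H\"older bound $g(z) - g(0) \le Kt^\alpha$ on $\mathbb B_t$ gives simultaneously $\widetilde v \le v \le 0$ and $\widetilde v(0) = v(0) - Kt^\alpha \ge -2s^{-1}$. Then $\tfrac{s}{2}\widetilde v(tz)$ is psh on $\mathbb B$, nonpositive, with value $\ge -1$ at the origin, so H\"ormander's Theorem 4.4.5 applies directly with no normalization by $|w(0)|$, and the change of variables yields $\int_{\mathbb B_{t/2}} \exp(-s\widetilde v/2)\,\nu \le ct^{2k}$; the final estimate follows from $\widetilde v \le v$ with no leftover $\exp(sg(0)/2)$ factor to control. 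You should incorporate the $g(0) + Kt^\alpha$ shift explicitly rather than normalizing by $|\tilde v(0)|$, and drop the claim that $\exp(sg/2)$ is universally bounded.
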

\begin{proof}
As $v$ is psh modulo $T$, we have $v=v'-g$ with $v'$ psh. We set $\widetilde v(z)=v'(z)-g(0)-Kt^\alpha.$ Then $\widetilde v$ is psh in $\mathbb B_t,$ $\widetilde v(0)=v(0)-Kt^\alpha\geq -2s^{-1}$ and $\widetilde v\leq v\leq 0$ because $g(z)-g(0)\leq Kt^\alpha$ on $\mathbb B_t.$ By \cite[Theorem 4.4.5]{ho-book} there exists $c>0$ such that
\begin{equation*}
 \int_{\mathbb B_{1/2}}\exp(-\frac{s\widetilde v(tz)}{2})\nu\leq c,
\end{equation*}
thus, by a change of variables $z\mapsto tz,$ we get
\begin{equation*}
  \int_{\mathbb B_{t/2}}\exp(-\frac{sv}{2})\nu\leq\int_{\mathbb B_{t/2}}\exp(-\frac{s\widetilde v}{2})\nu\leq ct^{2k}.
\end{equation*}
\end{proof}

For the rest of the section, $X$ always denotes an irreducible analytic subset of $\mathbb P^k$ of dimension $l$ and $v$ is a psh modulo $T$ function in $\mathbb P^k$ with $v\leq0.$ In Section \ref{sec equi} we will extend the previous result to the neighborhood of $X,$ where the condition at $0$ is replaced by an integrability condition on $X.$ For this purpose, we have to control the size of sublevel sets of $v$ in $X.$ This is the aim of the following global result.
\begin{lemma}\label{le Ho2: avec valeur L^p}
 For $X$ as above, there exists $q_0\geq1$ with the following property. For $q>q_0$ we set $\epsilon=2lq_0/q\alpha$ and take $M>0$ and $s\geq 1$ such that $s^{1+\epsilon}\|v\|_{L^q({X})}\leq M.$ Then, there exist constants $a,c>0$ independent of $v,$ $q$ and $s$ such that
\begin{equation}\label{eq Ho2: avec valeur L^p}
 \int_{X}\exp(-asv)\omega^l\leq c.
\end{equation}
If $X$ is smooth, we can choose $q_0=1.$
\end{lemma}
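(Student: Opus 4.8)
The plan is to reduce the global estimate \eqref{eq Ho2: avec valeur L^p} to the local Hörmander estimate of Lemma \ref{le Ho1: avec valeur centrale} by a covering argument, with the $L^q$ hypothesis playing the role of the pointwise control $v(0)>-\infty$ in that lemma. The key mechanism is as follows: if $\|v\|_{L^q(X)}\leq M s^{-1-\epsilon}$ then $v$ cannot be too negative on too large a portion of $X$; more precisely, the sublevel set $\{v<-s^{-1}\}$ (or a suitable variant) has small volume in $X$, hence many balls of a fixed small radius $t$ contain a point where $v(0)\gtrsim -s^{-1}$, and on those balls Lemma \ref{le Ho1: avec valeur centrale} applies. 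The exponent $\epsilon=2lq_0/q\alpha$ is precisely what is needed to make the bookkeeping between the radius $t$, the constant $Kt^\alpha\leq s^{-1}$ forced by Lemma \ref{le Ho1: avec valeur centrale}, and the volume/Chebyshev estimate on the bad set close up.

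First I would treat the smooth case, where $q_0=1$. Fix a finite atlas of $X$ by charts biholomorphic to balls in $\mathbb C^l$ on which $T$ (pulled back) has a $(K',\alpha)$-Hölder potential; it suffices to prove the estimate on one such chart, say $\mathbb B\subset\mathbb C^l$, with $v$ psh modulo $T$ there. Choose $t$ comparable to $s^{-1/\alpha}$ (so that $Kt^\alpha\leq s^{-1}$ as required), and cover $\mathbb B_{1/2}$ by $O(t^{-2l})$ balls $\mathbb B(x_j,t/2)$. By Chebyshev applied to $|v|^q$, the set $W=\{x:\ -v(x)>s^{-1}\}$ has volume at most $s^q\|v\|_{L^q(X)}^q\le s^q\cdot (Ms^{-1-\epsilon})^q = M^q s^{-q\epsilon}$. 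Since each ball has volume $\simeq t^{2l}\simeq s^{-2l/\alpha}$ and $q\epsilon = 2lq_0/\alpha$ with $q_0=1$ gives $s^{-q\epsilon}=s^{-2l/\alpha}$ comparable to a single ball's volume, by taking the constant in $t$ appropriately one ensures each ball $\mathbb B(x_j,t/4)$ meets the complement of $W$ in a set of positive measure; pick such a center $y_j$ with $-v(y_j)\le s^{-1}$, apply Lemma \ref{le Ho1: avec valeur centrale} on $\mathbb B(y_j, t/2)\supset \mathbb B(x_j,t/4)$ with the exponent $s/2$ there (so with $s'=s$, noting $s<-v(y_j)^{-1}$ may need a harmless factor-$2$ adjustment of the target exponent $a$), getting $\int_{\mathbb B(x_j,t/4)}\exp(-\tfrac{a}{2}sv)\,\omega^l\lesssim t^{2l}$. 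Summing over the $O(t^{-2l})$ balls gives a uniform bound, which is \eqref{eq Ho2: avec valeur L^p}.

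For the singular case, $X$ is not smooth, so one passes to a desingularization $\pi:\widehat X\to X$. The difficulty — and this is the main obstacle — is that $v\circ\pi$ is psh modulo $\pi^*T$ on the smooth manifold $\widehat X$, but $\pi^*T$ need only have Hölder potentials with a worse exponent (by the Łojasiewicz-type distortion of $\pi$, cf. Lemma \ref{le distance eclat sur ensemble ana}), and the volume form pulls back with a Jacobian vanishing along the exceptional locus; comparing $\int_X \exp(-asv)\,\omega^l$ with $\int_{\widehat X}\exp(-as\,v\circ\pi)\,\widehat\omega^l$ requires controlling $\exp(-asv)$ against the integrability of a negative power of that Jacobian, exactly the kind of estimate packaged in Lemma \ref{le remont\'e d'estim\'ee par surjection}. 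One applies the smooth case on $\widehat X$ but with the $L^q$ norm $\|v\circ\pi\|_{L^q(\widehat X)}$, which by Lemma \ref{le remont\'e d'estim\'ee par surjection} is controlled by $\|v\|_{L^{q'}(X)}$ for some larger $q'$; this is the source of the threshold $q_0>1$ — one takes $q_0$ large enough to absorb both the loss in the Hölder exponent of $\pi^*T$ and the loss from the $L^q\to L^{q'}$ comparison. Pushing the resulting bound on $\widehat X$ back down to $X$ via $\pi_*$, using that $\pi$ is generically finite and proper, yields \eqref{eq Ho2: avec valeur L^p}. Throughout, care is needed that all constants ($a$, $c$, $q_0$) depend only on $X$, $T$, $k$, $l$ and not on $v$, $q$, $s$ — which holds because the atlas, the desingularization, and the Łojasiewicz exponents are fixed once $X$ and $T$ are.
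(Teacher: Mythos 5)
Your route through Chebyshev plus Lemma~\ref{le Ho1: avec valeur centrale} is genuinely different from the paper's. The paper does not apply Lemma~\ref{le Ho1: avec valeur centrale} at all here: after rescaling $\widetilde v_i(z)=s\bigl(v_i'(tz)-g_i(0)\bigr)$ with $t=s^{-1/\alpha}$, the hypothesis $s^{1+\epsilon}\|v\|_{L^q}\le M$ produces a uniform bound $\|\widetilde v_i\|_{L^1(\mathbb B_2)}\le M'$ with $M'$ depending only on $M,K$ (not on $q,s,v$), and the exponential estimate then comes directly from compactness of $\{u\in PSH(\mathbb B_2):\|u\|_{L^1}\le M'\}$. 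That is what makes the constants $a,c$ cleanly uniform in $q$. Your Chebyshev version has a real gap at exactly this point: you get $|W|\le M^q s^{-q\epsilon}=M^q s^{-2l/\alpha}$, while the constraint $Kt^\alpha\le s^{-1}$ from Lemma~\ref{le Ho1: avec valeur centrale} forces $t\le(Ks)^{-1/\alpha}$ and hence ball volume $\lesssim K^{-2l/\alpha}s^{-2l/\alpha}$. The ratio $M^qK^{2l/\alpha}$ blows up with $q$ once $M>1$, so ``taking the constant in $t$ appropriately'' cannot make every ball meet $W^c$, and the resulting $a,c$ would depend on $q$, contrary to the statement. This is fixable -- replace $W$ by $W'=\{-v>Rs^{-1}\}$ with $R$ large depending on $M,K$ only, so that $|W'|\le(M/R)^q s^{-2l/\alpha}$ is uniformly small, and then apply Lemma~\ref{le Ho1: avec valeur centrale} with the parameter $s/(2R)$ in place of $s$ (hence $a\sim 1/R$) -- but as written the step fails.

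In the singular case your overall structure (desingularize, apply the smooth case upstairs, push down) matches the paper, but you misattribute where the loss $q_0>1$ comes from. Pulling back the H\"older potentials of $T$ by the holomorphic (hence Lipschitz) map $\pi:\widehat X\to\mathbb P^k$ does not degrade the H\"older exponent $\alpha$; Lemma~\ref{le distance eclat sur ensemble ana} quantifies the distortion in the opposite direction, from $\widehat X$ back to $X$, and is not needed here. Likewise, pushing the exponential bound down is trivial, since $\pi^*(\omega^l)=h\eta$ with $h$ bounded above. The only real loss is in the $L^q$ comparison $\|v\circ\pi\|_{L^{q/q_0}(\widehat X)}\lesssim\|v\|_{L^q(X)}$ from Lemma~\ref{le remont\'e d'estim\'ee par surjection} (negative powers of the Jacobian are integrable but not bounded), and this is the sole source of $q_0$; the exponent $\epsilon=2lq_0/q\alpha$ is precisely calibrated so that applying the smooth case upstairs with $q/q_0$ matches the hypothesis downstairs with $q$.
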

\begin{proof}
First, assume that $X$ is a compact smooth manifold with a volume form $\eta.$ Since $X$ has dimension $l,$ for $t>0$ we can cover it by balls $(B_i)_{i\in I}$ with $B_i:=B_X(x_i,t)$ and such that $|I|\leq c't^{-2l}$ for some $c'>0.$ Let $t=s^{-1/\alpha}.$
As above, in each ball $B_X(x_i,2t)$ we can write $v=v_i'-g_i,$ where $g_i$ is a local potential of $T.$ Using local charts at $x_i,$ we can identify $B_X(x_i,2t)$ with $\mathbb B_{2t}$ in $\mathbb C^l.$ We consider $\widetilde v_i(z)=s(v_i'(tz)-g_i(0)).$ These functions are psh in $\mathbb B_2.$ We show that they belong to a compact family, independent of $v$ and $s.$ Using a change of variables $z\mapsto tz$ and Hölder's inequality, we get
\begin{align*}
 \|\widetilde v_i\|_{L^1(\mathbb B_2)}&\leq\int_{\mathbb B_2}s|v(tz)|\nu + \int_{\mathbb B_2}s|g_i(tz)-g_i(0)|\nu\\
&\leq st^{-2l}\|v\|_{L^q(X)}|\mathbb B_2|^{1/p}t^{2l/p}+2^\alpha K|\mathbb B_2|\\
&=s^{1+\epsilon}\|v\|_{L^q(X)}|\mathbb B_2|^{1/p}+2^\alpha K|\mathbb B_2|\\
&\leq M|\mathbb B_2|^{1/p}+2^\alpha K|\mathbb B_2|\leq M',
\end{align*}
where $p$ is the conjugate of $q$, $|\mathbb B_2|$ is the volume of $\mathbb B_2$ and $M'$ is a positive constant. The family $\cali U=\{u\in PSH(\mathbb B_2)\,|\,\|u\|_{L^1(\mathbb B_2)}\leq M'\}$ is compact so there exists a constant $a>0$ such that $\|\exp(-au)\|_{L^1(\mathbb B)}$ is uniformly bounded for all $u\in\cali U.$ Therefore, for $i\in I$
\begin{equation*}
 \int_{\mathbb B_t}\exp(-as(v_i'(z)-g_i(0)))\nu\lesssim t^{2l}.
\end{equation*}
Moreover, the Hölder continuity implies that $-sv(z)\leq K-s(v_i'(z)-g_i(x_i))$ in $B_i.$ Hence, since $(B_i)_{i\in I}$ is a covering of $X$ we obtain
\begin{align*}
  \int_{X}\exp(-asv)\eta&\leq\sum_{i\in I}\int_{B_i}\exp(-asv)\eta \\
&\leq \sum_{i\in I}\int_{B_i}\exp(a(K-s(v_i'(z)-g_i(x_i))))\eta \\
&\lesssim\sum_{i\in I}t^{2l}\leq c'.
\end{align*}
This implies the lemma if $X$ is smooth with $q_0=1.$

In the general case, we consider a desingularization $\pi:\widehat X\to X$ with a volume form $\eta$ on $\widehat X.$ The map $\pi$ is surjective, then by Lemma \ref{le remonté d'estimée par surjection}, there exists $q_0\geq1$ such that
$$\|\widehat v\|_{L^{q/q_0}(\widehat X,\eta)}\lesssim\|v\|_{L^{q}(X,\omega^l)}.$$
Moreover, $\pi^*(T)$ possesses $\alpha$-Hölder local potentials and $\widehat v\leq 0$ is psh modulo $\pi^*(T).$ Therefore, this choice of $q_0$ allows us to apply the lemma on $\widehat X$ and get 
$$\int_{\widehat X}\exp(-as\widehat v)\eta\leq c.$$
The result follows since
$$\int_{X}\exp(-asv)\omega^l=\int_{\widehat X}\exp(-as\widehat v)\pi^*(\omega^l)\leq \|h\|_\infty \int_{\widehat X}\exp(-as\widehat v)\eta,$$
where we write $\pi^*(\omega^l)=h\eta.$
\end{proof}
The following estimate is a consequence of Lemma \ref{le Ho1: avec valeur centrale} and is related to the geometry of sublevel sets of psh modulo $T$ functions. In Section \ref{sec equi}, it will establish the existence of balls where we can apply our volume estimates.
\begin{lemma}\label{le Ho3: geometrie des sous niveau}
For $s\geq2$ set $F_s=\{x\in X\,|\, v(x)\leq -s^{-1}\}.$ There are constants $\beta,c>0$ independent of $v$ and $s$ such that  if $F_s$ contains no ball of radius $s^{-\beta}$ then
\begin{equation*}
 \int_{X}\exp(-\frac{sv}{2})\omega^l\leq c.
\end{equation*}
\end{lemma}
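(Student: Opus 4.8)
\emph{Proof strategy.} The plan is to cover $X$ by balls of radius $\approx s^{-\beta}$, note that on each such ball $v$ must exceed $-s^{-1}$ somewhere (otherwise $F_s$ would contain a ball of radius $s^{-\beta}$), apply the central-value estimate of Lemma~\ref{le Ho1: avec valeur centrale} on each ball, and sum. I would first do the case where $X$ is smooth. Fix a finite atlas of $X$ by uniformly bi-Lipschitz charts into $\mathbb{C}^l$, let $K_0>0$ be a common $\alpha$-H\"older constant for the local potentials of $T$ read in these charts, and fix an absolute $C>0$ so that a set $B(y,2\rho)\cap X$ read in a chart centred at $y$ lies in $\mathbb{B}_{C\rho}\subset\mathbb{C}^l$ for $\rho$ below the ``size'' of the atlas. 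Choose $\beta\geq 1$ large enough that $\alpha\beta-1\geq\log_2(K_0(2C)^\alpha)$, so that $K_0(2Cr)^\alpha\leq s^{-1}$ for all $s\geq 2$ with $r:=s^{-\beta}$ (enlarging $\beta$ further so that $B(\cdot,3r)\cap X$ sits in charts). Take a maximal $(r/2)$-separated set $\{x_i\}_{i\in I}$ in $X$; then $\{B(x_i,r)\cap X\}_{i\in I}$ covers $X$ and Lelong's lower volume bound gives $|I|\lesssim r^{-2l}$. For each $i$, if $B(x_i,r)\cap X$ were contained in $F_s$ then $B_X(x_i,r)\subset F_s$, contradicting the hypothesis; so there is $y_i\in B(x_i,r)\cap X$ with $v(y_i)>-s^{-1}$. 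Since $B(x_i,r)\cap X\subset B(y_i,2r)\cap X$, reading $v$ in the chart centred at $y_i$ and applying Lemma~\ref{le Ho1: avec valeur centrale} in $\mathbb{C}^l$ (its proof is insensitive to the ambient dimension) with this centre, the same $s$, and $t=2Cr$, the conditions $v(y_i)>-s^{-1}$ and $K_0t^\alpha\leq s^{-1}$ hold, so
\begin{equation*}
\int_{B(x_i,r)\cap X}\exp(-\tfrac{sv}{2})\,\omega^l\;\lesssim\;\int_{\mathbb{B}_{Cr}}\exp(-\tfrac{sv}{2})\,\nu\;\leq\;c\,(2Cr)^{2l}\;\lesssim\; r^{2l}.
\end{equation*}
Summing over $i\in I$ gives $\int_X\exp(-\tfrac{sv}{2})\,\omega^l\lesssim |I|\,r^{2l}\lesssim 1$, which is the claim in the smooth case.

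For a general irreducible $X$, I would pass to a desingularization $\pi:\widehat X\to X$, fix a K\"ahler form $\widehat\eta$ on $\widehat X$, and write $\pi^*(\omega^l)=h\widehat\eta$ with $h$ bounded. The lift $\widehat v:=v\circ\pi$ is psh modulo $\pi^*T$, is $\leq 0$, and $\pi^*T$ has $\alpha$-H\"older local potentials. Applying Lemma~\ref{le vol de ball par eclat} successively along the blow-ups composing $\pi$, there is $N\geq 1$ such that the $\pi$-image of any ball of radius $\rho\leq 1/2$ in $\widehat X$ contains a ball of radius $\rho^N$ in $X$. Let $\beta_0$ be the exponent whose existence the smooth case establishes for $\widehat X$ in place of $X$ and $\pi^*T$ in place of $T$, and set $\beta:=N\beta_0$. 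If $F_s$ contains no ball of radius $s^{-\beta}$, then $\widehat F_s:=\{\widehat v\leq -s^{-1}\}=\pi^{-1}(F_s)$ contains no ball of radius $s^{-\beta_0}$ in $\widehat X$: a ball $B_{\widehat X}(\widehat y,s^{-\beta_0})\subset\widehat F_s$ would have $\pi$-image inside $F_s$ and containing a ball of radius $(s^{-\beta_0})^N=s^{-\beta}$, a contradiction. The smooth case on $\widehat X$ then gives $\int_{\widehat X}\exp(-\tfrac{s\widehat v}{2})\,\widehat\eta\leq c$, and since $\pi$ is proper and generically one-to-one,
\begin{equation*}
\int_X\exp(-\tfrac{sv}{2})\,\omega^l=\int_{\widehat X}\exp(-\tfrac{s\widehat v}{2})\,\pi^*(\omega^l)\leq\|h\|_\infty\int_{\widehat X}\exp(-\tfrac{s\widehat v}{2})\,\widehat\eta\leq c\,\|h\|_\infty,
\end{equation*}
which finishes the proof.

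\emph{Main obstacle.} The delicate point is the bookkeeping of the exponent $\beta$: it must simultaneously be large enough for the covering estimate (so that $K_0(2Cr)^\alpha\leq s^{-1}$ holds uniformly in $s\geq 2$) and, in the singular case, equal $N$ times the analogous threshold for the desingularization; the crucial fact is that all these thresholds depend only on $f$, on $X$, and on $\alpha,K$ — never on $v$ or $s$ — which is what makes $\beta$ and $c$ universal. The only genuinely geometric inputs are Lemma~\ref{le vol de ball par eclat}, used to transport the ``no large ball'' hypothesis to $\widehat X$, and Lelong's lower volume bound, used to control the cardinality of the covering; the remaining checks (uniform choice of charts, comparability of $\omega^l$ and $\nu$ on them, and dimension-independence of Lemma~\ref{le Ho1: avec valeur centrale}) are routine.
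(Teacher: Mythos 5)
Your proof is correct and follows essentially the same strategy as the paper's: cover $X$ by small balls, use the hypothesis to find in each a point where $v$ is not too negative, apply Lemma~\ref{le Ho1: avec valeur centrale} centred at that point, sum using the Lelong lower bound on ball volumes, and reduce the singular case to the smooth one via a desingularization together with Lemma~\ref{le vol de ball par eclat}. The only difference is superficial bookkeeping (the paper covers by balls of radius $t\sim s^{-1/\alpha}$ and then observes that a ball of radius $t$ in $F_s$ would contain one of radius $s^{-\beta}$, whereas you cover directly at radius $r=s^{-\beta}$); the choices of $\beta$ and the final estimate are the same.
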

\begin{proof}
 We first consider the case where $X$ is smooth. Let $t=4^{-1}(Ks)^{-1/\alpha}.$ As in the proof of the previous lemma, we cover $X$ by balls $(B_i)_{i\in I}$ of radius $t$ with $|I|\leq c't^{-2l},$ $c'>0.$ Assume there is no ball of radius $t$ in $F_s.$ Hence, for each $i\in I$ there exists $x_i$ in $B_i$ such that $v(x_i)>-s^{-1}.$ The balls $B'_i$ of center $x_i$ and of radius $2t$ cover $X.$ Thus
\begin{equation*}
 \int_{X}\exp(-\frac{sv}{2})\omega^l\leq\sum_{i\in I}\int_{B'_i}\exp(-\frac{sv}{2})\omega^l.
\end{equation*}
But, $s<-v(x_i)^{-1}$ and $K(4t)^\alpha\leq s^{-1}$ therefore we can apply Lemma \ref{le Ho1: avec valeur centrale} on each ball
\begin{equation*}
 \int_{B'_i}\exp(-\frac{sv}{2})\omega^l\lesssim t^{2l}.
\end{equation*}
Hence, we get
\begin{equation*}
 \int_{X}\exp(-\frac{sv}{2})\omega^l\lesssim\sum_{i\in I}t^{2l}\leq c',
\end{equation*}
which gives the result when $X$ is smooth with $\beta>1/\alpha$ such that $s^{-\beta}<t.$

If $X$ is singular, we consider a desingularization $\pi:\widehat X\to X.$ By Lemma \ref{le vol de ball par eclat}, there exists $N\geq1$ such that the image of a ball of radius $r$ under $\pi$ contains a ball of radius $r^{N}.$ Hence, if $\beta$ is large enough, the hypothesis on $F_s$ assures there is no ball of radius $t$ in $\widehat F_s=\pi^{-1}(F_s).$ Then, we can apply the lemma to $\widehat v=v\circ\pi$ which is psh modulo $\pi^*(T).$ We get
$$\int_X\exp(-\frac{sv}{2})\omega^l=\int_{\widehat X}\exp(-\frac{s\widehat v}{2})\pi^*(\omega^l)\leq c,$$
for some $c>0,$ since $\pi^*(\omega^l)$ is smooth.
\end{proof}

\section{Exceptional sets}\label{sec multi}
Let $f$ be an endomorphism of $\mathbb P^k$ of algebraic degree $d\geq2.$ The aim of this section is to construct two families $\cali A_\lambda$ and $\cali B_\lambda$ of analytic sets where the iterate sequence $f^n$ has important local multiplicities. Let $X\subset\mathbb P^k$ be an irreducible invariant analytic set. Define $\kappa_{X,n}(x),$ or simply $\kappa_n(x)$ if no confusion is possible, as the local multiplicity of $f_{|X}^n$ at $x.$ It is \textit{a sub-multiplicative cocycles}, namely it is upper semi-continuous for the Zariski topology on $X,$ $\min_X\kappa_{n}=1$ and for any $m,n\geq 0$ and $x\in X$ we have the following sub-multiplicative relation
\begin{equation*}
 \kappa_{n+m}(x)\leq\kappa_{m}(f^n(x))\kappa_{n}(x).
\end{equation*}
The inequality may be strict when $X$ is singular. Define
\begin{equation*}
 \kappa_{-n}(x):=\max_{y\in (f_{|X})^{-n}(x)}\kappa_{n}(y).
\end{equation*}
We recall the following theorem of Dinh \cite{d-ana}, see also \cite{favre-multi}.
\begin{theorem}\label{th ens excep}
The sequence of functions $\kappa_{-n}^{1/n}$ converges pointwise to a function $\kappa_{-}.$ Moreover, for every $\lambda>1,$ the level set $E_{\lambda}(X)=\{\kappa_{-}\geq \lambda\}$ is a proper analytic subset of $X$ which is invariant under $f_{|X}.$ In particular, $\kappa_{-}$ is upper semi-continuous in the Zariski sense.
\end{theorem}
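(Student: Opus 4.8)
The plan is to establish the three conclusions in the order: pointwise convergence of $\kappa_{-n}^{1/n}$ to a function $\kappa_-$; analyticity and $f_{|X}$-invariance of the level sets $E_\lambda(X)$; and the Zariski upper semicontinuity of $\kappa_-$, which is merely a reformulation of the analyticity of all the $E_\lambda(X)$. The elementary structural facts come first. Since $f^n_{|X}:X\to X$ is a finite proper holomorphic covering and $\kappa_n$ is Zariski upper semicontinuous, for every $c$ one has $\{\kappa_{-n}\ge c\}=f^n(\{\kappa_n\ge c\})$, the image of a Zariski-closed set under a finite proper map, hence an analytic subset of $X$; thus $\kappa_{-n}$ is itself Zariski upper semicontinuous. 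Because $\min_X\kappa_n=1$, the set $\{\kappa_n\ge 2\}$ is a proper subvariety, so $\{\kappa_{-n}\ge2\}=f^n(\{\kappa_n\ge2\})$ is proper as well; hence $\kappa_{-n}=1$ off a proper subvariety, and a fortiori $\{\kappa_-\ge\lambda\}\subseteq\bigcup_n\{\kappa_{-n}\ge2\}$ for every $\lambda>1$. This is a countable union of proper subvarieties, so by Baire's theorem it is not all of $X$; consequently, once $E_\lambda(X)$ is known to be analytic, the irreducibility of $X$ forces it to be a \emph{proper} subvariety.

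Next I would transfer the submultiplicativity to the backward cocycle. Given $y$ with $f^{n+m}(y)=x$ and setting $z:=f^n(y)$, the cocycle inequality $\kappa_{n+m}(y)\le\kappa_m(z)\kappa_n(y)$ combined with $\kappa_m(z)\le\kappa_{-m}(x)$ and $\kappa_n(y)\le\kappa_{-n}(z)$ gives
\[
\kappa_{-(n+m)}(x)\le\kappa_{-m}(x)\cdot\max_{z\in(f^m_{|X})^{-1}(x)}\kappa_{-n}(z).
\]
Taking the supremum over $X$ yields $\sup_X\kappa_{-(n+m)}\le(\sup_X\kappa_{-m})(\sup_X\kappa_{-n})$, so by Fekete's lemma $(\sup_X\kappa_{-n})^{1/n}$ decreases to some $\Lambda\le d^l$. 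Moreover the local multiplicity is non-decreasing in $n$ (a branch of $f^{-1}$ near $f^{n-1}(y)$ already contributes $\kappa_{n-1}(y)$ points of $(f^n)^{-1}$ near $y$), so $\kappa_{-n}(f(y))\ge\kappa_{-(n-1)}(y)$; together with the displayed inequality for $m=1$ this shows that $\bar\kappa:=\limsup_n\kappa_{-n}^{1/n}$ satisfies both $\bar\kappa(f(y))\ge\bar\kappa(y)$ and $\bar\kappa(f(y))\le\max_{w\in f^{-1}(f(y))}\bar\kappa(w)$, hence $\bar\kappa(x)=\max_{z\in f^{-1}(x)}\bar\kappa(z)$ for every $x$; the analogous one-sided relations hold for $\underline\kappa:=\liminf_n\kappa_{-n}^{1/n}$. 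Once $\kappa_-=\bar\kappa=\underline\kappa$ is known to exist, the identity for $\bar\kappa$ gives at once $f(E_\lambda(X))=E_\lambda(X)$.

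The heart of the matter is the simultaneous proof that $\underline\kappa=\bar\kappa$ and that the level sets are analytic; I would do this by descending induction on $\dim X$. For $\lambda>1$ set
\[
W_\lambda:=\bigcap_{N\ge1}\ \overline{\bigcup_{n\ge N}\{\kappa_{-n}\ge\lambda^n\}},
\]
the bars being Zariski closures (algebraic subvarieties of $X$). The sets inside the intersection decrease with $N$, so by Noetherianity they stabilize and $W_\lambda$ is algebraic, and one checks $\{\bar\kappa\ge\lambda\}\subseteq\bigcap_{1<\mu<\lambda}W_\mu$, so that any candidate for $E_\lambda(X)$ is squeezed between the $W_\mu$'s. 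The delicate point — and the main obstacle — is to upgrade the information ``the exponential rate of $\kappa_{-n}$ is $\ge\lambda$ on a Zariski-dense subset of $W_\lambda$'', which is all a $\limsup$ together with a Zariski closure give directly, to ``$\underline\kappa\ge\lambda$ on all of $W_\lambda$''. This is where the induction enters: each irreducible component $X'$ of $W_\lambda$ is $f$-invariant (by the identities above), the restriction of the cocycle $(\kappa_n)$ to $X'$ is again a Zariski-usc submultiplicative cocycle over $f_{|X'}$, and $X'\subsetneq X$, so the inductive statement applies on $X'$ and forces the rate to be constant there. Granting this, $\underline\kappa=\bar\kappa=:\kappa_-$ everywhere, $E_\lambda(X)=\{\kappa_-\ge\lambda\}$ coincides with $W_\lambda$ for all but countably many $\lambda$ and hence, by monotonicity in $\lambda$, for all $\lambda>1$; it is analytic, proper (by the Baire remark), and invariant, and its analyticity for all $\lambda$ is precisely the Zariski upper semicontinuity of $\kappa_-$. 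The recurring difficulty — already visible in having to control multiplicities under restriction to the possibly singular invariant subvarieties $W_\lambda$ — is exactly the one the geometric machinery of the paper is built to handle.
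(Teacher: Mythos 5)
The paper does not prove this theorem. It is quoted from Dinh \cite{d-ana} --- the text immediately before it reads ``We recall the following theorem of Dinh'' --- with Favre \cite{favre-multi} cited as an alternative reference, so there is no in-paper argument to compare yours against. What follows assesses your sketch on its own terms.

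The overall shape of your outline --- taking Zariski closures of the superlevel sets $\{\kappa_{-n}\geq\lambda^n\}$, using Noetherian stabilization to make $W_\lambda$ algebraic, and closing the gap between $\bar\kappa$ and $\underline\kappa$ by descending to invariant strata of smaller dimension --- is indeed the right flavour, but two steps are not secured. First, in the inductive step you apply ``the inductive statement'' to the restriction of $(\kappa_n)=(\kappa_{X,n})$ to a component $X'$ of $W_\lambda$. But the theorem, as stated, concerns $\kappa_{X',n}$, the local multiplicity of $f_{|X'}^n$; the restriction of $\kappa_{X,n}$ to $X'$ is a \emph{different} cocycle, and it need not satisfy the normalization $\min\kappa_n=1$ that makes the conclusion nontrivial (a component of $W_\lambda$ will in general sit inside the ramification locus of $f_{|X}$). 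To make the induction close one must state and prove the result for \emph{arbitrary} analytic submultiplicative cocycles, not only for the local-multiplicity cocycle of a restricted map --- this generalization is exactly the content of Dinh's paper, and you implicitly assume it rather than establish it. Second, you invoke the $f$-invariance of each irreducible component of $W_\lambda$ ``by the identities above,'' but those identities concern $\bar\kappa$ and its superlevel sets, not $W_\lambda$, which is a Zariski closure; an extra argument (for instance combining the monotonicity $\kappa_{-n}(f(y))\geq\kappa_{-(n-1)}(y)$ you noted with the stabilization of the decreasing chain defining $W_\lambda$) is needed before the phrase ``restrict the cocycle to $X'$'' makes dynamical sense. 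Until those two points are repaired, the induction does not actually run.
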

For a generic endomorphism of $\mathbb P^k,$ $E_\lambda(\mathbb P^k)$ is empty. In this case, Theorem \ref{th main} is already know in all bidegrees \cite{ds-superpot}. In our proof, we will proceed by induction, proving the the exponentially fast convergence on $X$ if it is already established on each irreducible component of $E_\lambda(X).$ But, even if $E_\lambda(X)$ is invariant, its irreducible components are periodic and not invariant in general. Therefore, if $X$ is only periodic, we define $E_\lambda(X)$ in the same way, replacing $f$ by $f^p$ and $\lambda$ by $\lambda^p,$ where $p$ is a period of $X.$ By Theorem \ref{th ens excep}, this definition is independent of the choice of $p.$

Fix $1<\lambda<d.$ Define the family $\cali B_\lambda$ of exceptional sets as follows. First, we set $\mathbb P^k\in\cali B_\lambda.$ If $X$ is in $\cali B_\lambda,$ we add to $\cali B_\lambda$ all irreducible components of $E_\lambda(X).$ This family is finite and since the functions $\kappa_-$ are upper semi-continuous in the Zariski sense, there exists $1<\delta<\lambda$ such that $\cali B_\lambda=\cali B_\delta,$ or equivalently $E_\lambda(X)=E_\delta(X)$ if $X\in\cali B_\lambda.$ This will give us some flexibility in order to obtain estimates using an induction process. 

As all elements of $\cali B_\lambda$ are periodic, they are invariant under some iterate $f^{n_0}.$ Let us remark that it is sufficient to prove Theorem \ref{th main} for an iterate of $f.$ Hence, we can assume that $n_0=1,$ replacing $f$ and $\lambda$ by $f^{n_0}$ and $\lambda^{n_0}.$ Dinh also proved that $\kappa_{n_1}<\delta^{n_1}$ outside $(f_{|X})^{-n_1}(E_\lambda(X))$ for some $n_1\geq1.$ Once again, we can assume that $n_1=1.$

The second family $\cali A_\lambda,$ that takes place in Theorem \ref{th main}, is defined as the set of minimal elements for the inclusion in $\cali B_\lambda.$ This family is not empty and each element of $\cali B_\lambda$ contains at least one element of $\cali A_\lambda.$ Note that no element of $\cali A_\lambda$ is contained in another one. These analytic sets play a special role in the next section, to start induction and to obtain compactness properties. When $\mathbb P^k$ is an element of $\cali A_\lambda,$ it is the only element in $\cali A_\lambda$ and the exceptional set is empty. Otherwise, define the exceptional set as the union of all the elements of $\cali A_\lambda.$

\section{Equidistribution speed}\label{sec equi}
This section is devoted to the proof of Theorem \ref{th main}. Fix an endomorphism $f$ of algebraic degree $d\geq 2$ of $\mathbb P^k,$ and denote by $T$ its Green current. Recall that $T$ is totally invariant i.e. $d^{-1}f^*(T)=T,$ and has $(K,\alpha)$-Hölder continuous local potentials for some $0<\alpha\leq1,$ $K>0.$

Fix $C>0$ and $1<\lambda<d,$ and let $\cali A_\lambda,$ $\cali B_\lambda$ be as in Section \ref{sec multi}. Define $\cali F_\lambda(C)$ as the family of psh modulo $T$ functions $v$ on $\mathbb P^k$ such that $\max_{\mathbb P^k}v=0$ and $\|v\|_{L^1(X)}\leq C$ for all $X\in\cali A_\lambda.$ By construction of $\cali A_\lambda,$ Lemma \ref{le wpsh borne sur ss ens} implies that $\cali F_\lambda(C)$ is compact for each $C>0.$  Moreover, if $X$ is an element of $\cali B_\lambda,$ the restriction of $\cali F_\lambda(C)$ to $X$ forms a family of wpsh modulo $T$ functions on $X$ which is relatively compact in $L^p(X)$ for every $1\leq p<+\infty.$

If $S$ is a positive closed $(1,1)$-current of mass $1,$ it is cohomologous to $T.$ Hence, there exists a unique psh modulo $T$ function $u$ on $\mathbb P^k$ such that $S=dd^cu+T$ and $\max_{\mathbb P^k}u=0.$ We call $u$ the dynamical potential of $S.$ As $T$ is totally invariant, the dynamical potential of $S_n=d^{-n}(f^n)^*(S)$ is $u_n=d^{-n}u\circ f^n.$

Since $S_n-T$ is a continuous linear operator on $\cali C^0(\mathbb P^k)$ whose norm is bounded, by interpolation theory between Banach spaces we have
\begin{equation*}
\|S_n-T\|_{\cali C^\beta}\lesssim \|S_n-T\|_{\cali C^2}^{\beta/2},
\end{equation*}
uniformly in $S$ and $n,$ see \cite{triebel-interpolation}. Consequently, in order to prove Theorem \ref{th main} we can assume that $\beta=2.$

Moreover, it is easy to see that $\|dd^c\phi\|_\infty\lesssim\|\phi\|_{\cali C^2}$ for $\phi$ in $\cali C^2(\mathbb P^k).$ Therefore,
\begin{align*}
 |\langle S_n-T,\phi\rangle|&=|\langle dd^cu_n,\phi\rangle|=|\langle u_n,dd^c\phi\rangle| \\
&\lesssim\|\phi\|_{\cali C^2}\|u_n\|_{L^1(\mathbb P^k)}.
\end{align*}
Hence, Theorem \ref{th main} is a direct consequence of the following theorem applied to $p=1$ and $X=\mathbb P^k.$
\begin{theorem}\label{th la version u_n du main}
For each $1\leq p<+\infty$ and $X\in\cali B_\lambda$ there exists a constant $A_{X,p}$ such that for all $u\in\cali F_\lambda(C)$ and $n\geq 0$ we have
\begin{equation*}
 \|u_n\|_{L^p(X)}\leq A_{X,p}\left(\frac{\lambda}{d}\right)^n,
\end{equation*}
where $u_n=d^{-n}u\circ f^n.$
\end{theorem}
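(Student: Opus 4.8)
The plan is to prove Theorem~\ref{th la version u_n du main} by induction on the dimension of $X\in\cali B_\lambda$, or more precisely by induction along the finite poset $\cali B_\lambda$ ordered by inclusion. The base case consists of the minimal elements, i.e.\ the elements of $\cali A_\lambda$: there the uniform bound $\|u\|_{L^1(X)}\le C$ combined with the total invariance identity $u_n=d^{-n}u\circ f^n$ and the fact that $(f^n)_*(\omega^l)$ has polynomially controlled mass on $X$ gives $\|u_n\|_{L^1(X)}\lesssim d^{-n}\|u\|_{L^1(X)}\lesssim d^{-n}C$, which is even stronger than required; the $L^p$ bound then follows from the exponential estimate of Lemma~\ref{le Ho2: avec valeur L^p} applied to the compact family $\cali F_\lambda(C)|_X$ (upgrading $L^1$ control to $L^p$ control at the cost of a constant). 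For the inductive step, I assume the stated bound holds on every irreducible component of $E_\lambda(X)$ and seek to establish it on $X$ itself.

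The heart of the argument is the size control of the sublevel set $K_n:=\{u_n\le -(\lambda/d)^n\}$ inside $X$. Since $u_n=d^{-n}u\circ f^n$ and $f^n$ is $d^{-n}$-homogeneous on potentials, $f^n(K_n)=\{u\le -\lambda^n\}$. First I would show, using the exponential estimate Lemma~\ref{le Ho2: avec valeur L^p} together with Lemma~\ref{le Ho3: geometrie des sous niveau}, that either $u_n$ already has small $L^p(X)$ norm (in which case we are done), or $K_n$ must contain a ball $B_X(x_n,s_n)$ with $s_n\gtrsim (\lambda/d)^{n\beta}$ for a suitable fixed exponent, and moreover this ball can be taken away from the exceptional set $E_\lambda(X)$ up to a controlled error handled by the induction hypothesis. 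Then I apply the volume/size estimate of Corollary~\ref{cor loja}: since the local multiplicity of $f_{|X}$ is $<\delta<\lambda$ outside $\widetilde E=f_{|X}^{-1}(E_\lambda(X))$, for $x_n$ outside a $t$-neighbourhood of $\widetilde E$ the image $f^n(B_X(x_n,s_n))$ contains a ball of radius $\gtrsim t^{Nn}s_n^{(b\delta)^n}$, and since $f^n(B_X(x_n,s_n))\subset\{u\le-\lambda^n\}$ this forces $\{u\le -\lambda^n\}$ to contain a ball whose radius is bounded below by an explicit quantity. But $u$ is a \emph{fixed} psh modulo $T$ function with H\"older continuous potentials, so its sublevel sets $\{u\le -\lambda^n\}$ shrink at a definite (H\"older) rate; comparing the two rates and choosing $\lambda$ slightly bigger than $\delta$ (using the flexibility $\cali B_\lambda=\cali B_\delta$) yields a contradiction for $n$ large, showing the second alternative cannot persist and hence $\|u_n\|_{L^p(X)}\lesssim(\lambda/d)^n$.

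The remaining technical points are the passage through singularities of $X$ and of $E_\lambda(X)$: to apply Lemmas~\ref{le Ho2: avec valeur L^p}, \ref{le Ho3: geometrie des sous niveau} and Corollary~\ref{cor loja} one must stay on the locally irreducible part, and to transfer the ``ball contains ball'' conclusions across the various neighbourhoods one invokes Lemma~\ref{le vol de ball par eclat}, Lemma~\ref{le estime voisinage de transforme totale} and the uniform H\"older-regularity results of Section~\ref{sec loja}. The induction hypothesis enters precisely to control the portion of $K_n$ sitting inside the tubular neighbourhood $E_\lambda(X)_t$: there one restricts $u_n$ to the components of $E_\lambda(X)$, applies the known bound, and uses a version of Lemma~\ref{le wpsh borne sur ss ens} to glue the estimate on $E_\lambda(X)_t$ with the estimate on $X\setminus E_\lambda(X)_t$ obtained from the volume argument, then optimizes over $t=t_n$ (a small negative power of $\lambda/d$ raised to $n$).

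The main obstacle I expect is the bookkeeping of exponents: each application of Corollary~\ref{cor loja} costs a factor $t^{Nn}$ and raises the radius to a power $(b\delta)^n$, which a priori is catastrophic, so the delicate point is to iterate the size estimate only \emph{once} (comparing $K_n$ directly with its full image $f^n(K_n)$, not step by step) and to balance the geometric decay $(\lambda/d)^n$ of the target radius against the H\"older decay rate of the fixed function $u$'s sublevel sets. Making the constant $A_{X,p}$ genuinely uniform over the non-compact set of currents $S$ — which is possible only because $\cali F_\lambda(C)$ is compact and $T$ has H\"older potentials, so that all the exponential estimates of Section~\ref{sec ho} hold with constants independent of $u\in\cali F_\lambda(C)$ — is the other point requiring care.
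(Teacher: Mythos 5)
Your overall architecture — induction along $\cali B_\lambda$ by inclusion, reduction to a size estimate for the sublevel set $K_n=\{u_n\le-s_n\}$ together with $f^n(K_n)=\{u\le -d^ns_n\}$, the volume estimate of Corollary~\ref{cor loja}, and the compactness of $\cali F_\lambda(C)$ — does match the paper. But two of the concrete steps you propose would not go through.

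First, the base case as you state it is false. You claim that for $X\in\cali A_\lambda$ the identity $u_n=d^{-n}u\circ f^n$ together with ``$(f^n)_*(\omega^l)$ has polynomially controlled mass'' gives $\|u_n\|_{L^1(X)}\lesssim d^{-n}\|u\|_{L^1(X)}$. The total mass of $(f^n_{|X})_*(\omega^l)$ is indeed constant, but its density against $\omega^l$ is unbounded near critical values of $f^n$ (and the order of the singularity grows with $n$); so $\int_X|u\circ f^n|\,\omega^l$ is \emph{not} controlled by $\int_X|u|\,\omega^l$ when $u$ has poles near those critical values. If such a bound held, the whole theorem would follow at rate $d^{-n}$ with no role for multiplicities, which cannot be right. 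What actually makes the base case work in the paper is structural: if $X$ is minimal in $\cali B_\lambda$ then $E_\lambda(X)=\varnothing$, so $\kappa_{X,1}<\delta$ everywhere and the ball-growing argument runs with an empty exceptional neighborhood; the $L^1$ hypothesis on $\cali A_\lambda$ enters only through the compactness of $\cali F_\lambda(C)$, not through a direct transfer to $u_n$.

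Second, your proposal to apply the ball-size estimate ``only once, comparing $K_n$ directly with $f^n(K_n)$'' goes in exactly the wrong direction. A one-shot application of Corollary~\ref{cor loja} to $f^n$ would require the center of the ball to avoid a tubular neighborhood of $\bigcup_{i=0}^{n-1}f^{-i}\widetilde E_\lambda$ — a set for which no usable volume bound is available — because the local multiplicity of $f^n$ is only controlled by $\delta^n$ off that whole union. The paper instead iterates step by step: it constructs inductively a ball $B_i\subset f^i(K_n)$ of radius $r_i=A(t_n/2)^N r_{i-1}^{\delta}$, and at each step it uses the exponential estimate in the shrinking tubular neighborhood $\widetilde E_{\lambda,t_{n-i}}$ (Lemma~\ref{le Ho4: dans vois avec valeur L^p centrale}, which is derived from the induction hypothesis via Lemma~\ref{le remont\'e d'estim\'ee par surjection} and the smooth case of Lemma~\ref{le Ho2: avec valeur L^p}) to rule out that $B_i$ is concentrated near $\widetilde E_\lambda$, so that Corollary~\ref{cor loja} can be applied once more. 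The resulting double-exponential shrinkage $r_n\approx r_0^{\delta^n}$ looks catastrophic, as you noticed, but it is exactly matched against the double-exponential upper bound $|\{u\le-\lambda_3^n\}|\lesssim\exp(-a'\lambda_3^n)$; the gap $\delta<\lambda_3<\lambda$ (and the reduction to $b=1$ after replacing $f$ by an iterate) is what makes the comparison yield a contradiction. You gesture at the induction hypothesis controlling $u_n$ near $E_\lambda(X)$, but the quantitative form (exponential integrability of $u_n/\rho_n$ over a $t_n$-neighborhood of $\widetilde E_\lambda$, with $t_n=(\lambda_2/d)^{n\eta}$) is the nontrivial intermediate result that would need to be proved, and your sketch does not reach it.
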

As in Section \ref{sec multi}, we can assume that each element of $\cali B_\lambda$ is invariant by $f,$ and there is $1<\delta<\lambda$ satisfying the following properties for all $X$ in $\cali B_\lambda:$
\begin{itemize}
 \item $E_\lambda(X)=E_\delta(X),$
\item $\kappa_{X,1}<\delta$ outside $\widetilde E_\lambda(X)=(f_{|X})^{-1}(E_\lambda(X)).$
\end{itemize}
Let $X$ be an element $\cali B_\lambda$ of dimension $l$ and $\lambda_1>0$ with $\delta<\lambda_1<\lambda.$ Assume that Theorem \ref{th la version u_n du main} is true on each irreducible component of $E_\lambda=E_\lambda(X)$ for $\lambda_1$ and all $p\geq 1.$ To prove it on $X,$ we consider the sublevel set $K_n=\{x\in X\,|\, u_n(x)\leq -s_n\}$ for a suitable constant $s_n.$ Exponential estimates on $\widetilde E_\lambda$ will prove that its image by $f^i,$ $0\leq i\leq n,$ cannot be concentrated near $\widetilde E_\lambda.$ Therefore, volume estimates will imply that $f^n(K_n)=\{x\in X\,|\, u(x)\leq -d^ns_n\}$ is large if Theorem \ref{th la version u_n du main} is false on $X.$ Hence, a good choice of $s_n,$ allowed by the gap between $\lambda_1$ and $\lambda,$ will give a contradiction.

We first fix some constants. In Corollary \ref{cor loja} the constant $b$ depends only on $X.$ Then, by replacing $f$ by $f^n$ and $\delta$ by $\delta'^n$ with $b\delta^n<\delta'^n<\lambda_1^n,$ we can assume that $b=1.$ Let $0<A\leq1,$ $N\geq1$ be the other constants of Corollary \ref{cor loja}. Fix $\lambda_2,\lambda_3>0$ such that
\begin{itemize}
 \item $\delta<\lambda_1<\lambda_2<\lambda_3<\lambda,$
\item and $q>q_0$ large enough such that $\lambda_1/d<(\lambda_2/d)^{1+\epsilon}$ where $\epsilon$ and $q_0$ are defined in Lemma \ref{le Ho2: avec valeur L^p}.
\end{itemize}
Multiplicities of $f_{|X}$ are controlled outside $\widetilde E_\lambda.$ By induction hypothesis, we have a control of $u_n$ on $E_\lambda.$ We want to extend it to $\widetilde E_\lambda.$ Let $E$ be an irreducible component of $E_\lambda.$ The restriction of $f$ to each component of $(f_{|X})^{-1}(E)$ is surjective onto $E.$ Therefore, we deduce from Lemma \ref{le remonté d'estimée par surjection} that there exists $q'\geq1$ such that
\begin{align*}
 \|v\circ f\|_{L^q((f_{|X})^{-1}(E))}\lesssim\|v\|_{L^{qq'}(E)},
\end{align*}
for all psh modulo $T$ function $v$ on $\mathbb P^k.$ Hence, by induction hypothesis, there is a constant $M>0$ such that $\|u_n\|_{L^q(\widetilde E_{\lambda})}\leq M(\lambda_1/d)^n$ for $n\geq1.$ The next step is to obtain exponential estimates in a neighborhood of $\widetilde E_\lambda.$
\begin{lemma}\label{le Ho4: dans vois avec valeur L^p centrale}
 There exist constants $c,\eta\geq 1$ and $n_0\geq 1$ such that if $n\geq n_0$ then for all $u\in\cali F_\lambda(C)$ we have
\begin{equation*}
 \int_{\widetilde E_{\lambda,t_n}}\exp(-(d/\lambda_2)^nu_n)\omega^{l}\leq c,
\end{equation*}
where $t_n=(\lambda_2/d)^{n\eta}.$
\end{lemma}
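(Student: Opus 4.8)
The plan is to cover the tube $\widetilde E_{\lambda,t_n}$ by balls of radius $\simeq t_n^{1/N}$ centred on $\widetilde E_\lambda$, to apply the central‑value estimate of Lemma \ref{le Ho1: avec valeur centrale} on each ball on which $u_n$ takes a value $\geq -\tfrac14(\lambda_2/d)^n$, and to use the induction bound $\|u_n\|_{L^q(\widetilde E_\lambda)}\leq M(\lambda_1/d)^n$ to show that, for $q$ large and $n\geq n_0$, every ball of the covering has this property. I would first pass to a smooth model: fix a desingularization $\pi:\widehat X\to X$, put $\widehat E:=\pi^{-1}(\widetilde E_\lambda)$, and note that $\widehat u_n:=u_n\circ\pi$ is psh modulo $\pi^*T$ on the compact K\"ahler manifold $\widehat X$, with $\widehat u_n\leq 0$ (since $u_n\leq 0$ on $\mathbb P^k$) and $\pi^*T$ still carrying $\alpha$‑H\"older local potentials. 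By Lemma \ref{le estime voisinage de transforme totale} there is $N\geq1$ with $(\widetilde E_\lambda)_{\tau^N}\subset\pi(\widehat E_\tau)$ for small $\tau$; setting $\tau_n:=(\lambda_2/d)^{n\eta/N}$ so that $t_n=\tau_n^N$, and using $\pi^*(\omega^l)=h\,\widehat\omega^l$ with $h$ bounded together with the generic injectivity of $\pi$, it suffices to bound $\int_{\widehat E_{\tau_n}}\exp\!\big(-(d/\lambda_2)^n\widehat u_n\big)\,\widehat\omega^l$. Applying Lemma \ref{le remont� d'estim�e par surjection} componentwise, the induction bound transfers into $\|\widehat u_n\|_{L^{\tilde q}(\widehat E)}\lesssim(\lambda_1/d)^n$ for a suitable exponent $\tilde q$, as soon as $q$ is large enough.

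Write $s_n:=(d/\lambda_2)^n$ and $\theta_n:=\tfrac14(\lambda_2/d)^n$, and cover $\widehat E$ by balls $B(\widehat x_i,\tau_n)$, $i\in I_n$, with $\{\widehat x_i\}\subset\widehat E$ a maximal $3\tau_n$‑separated set, so that $\widehat E_{\tau_n}\subset\bigcup_iB(\widehat x_i,4\tau_n)$, the balls $B(\widehat x_i,\tfrac32\tau_n)$ are pairwise disjoint, and $|I_n|\lesssim\tau_n^{-2(l-1)}$ by Lelong's volume bounds. Call $B(\widehat x_i,4\tau_n)$ \emph{good} if $\sup_{B(\widehat x_i,4\tau_n)}\widehat u_n\geq-\theta_n$ and \emph{bad} otherwise. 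On a good ball choose $\widehat z_i$ with $\widehat u_n(\widehat z_i)\geq-\theta_n$; since $2s_n<\theta_n^{-1}$ and, once $\eta>N/\alpha$, $K(16\tau_n)^\alpha\leq(2s_n)^{-1}$ for $n$ large, Lemma \ref{le Ho1: avec valeur centrale} applied at $\widehat z_i$ in a chart, with $s=2s_n$ and $t=16\tau_n$, gives
\begin{equation*}
\int_{B(\widehat x_i,4\tau_n)}\exp(-s_n\widehat u_n)\,\widehat\omega^l\ \lesssim\ \tau_n^{2l}.
\end{equation*}
Summing over the good balls produces a contribution $\lesssim|I_n|\,\tau_n^{2l}\lesssim\tau_n^{2}\leq 1$.

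It then remains to rule out bad balls for $q$ large and $n$ large. If $B(\widehat x_i,4\tau_n)$ were bad, then $\widehat u_n<-\theta_n$ on the ball $B_{\widehat E}(\widehat x_i,\tfrac32\tau_n)$ of $\widehat E$, which has volume $\gtrsim\tau_n^{2(l-1)}$; these sub‑balls being disjoint, Chebyshev's inequality and the transferred induction bound yield
\begin{equation*}
\#\{\text{bad }i\}\cdot\tau_n^{2(l-1)}\ \lesssim\ \mathrm{vol}\big(\{\widehat u_n<-\theta_n\}\cap\widehat E\big)\ \leq\ \theta_n^{-\tilde q}\,\|\widehat u_n\|_{L^{\tilde q}(\widehat E)}^{\tilde q}\ \lesssim\ (\lambda_1/\lambda_2)^{n\tilde q},
\end{equation*}
hence $\#\{\text{bad }i\}\lesssim(\lambda_1/\lambda_2)^{n\tilde q}\tau_n^{-2(l-1)}$. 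As $\lambda_1<\lambda_2$, enlarging $q$ (hence $\tilde q$) relative to $\eta$, $N$ and $l$ forces this quantity to $0$, so it is $<1$ for $n\geq n_0$ and no bad ball remains. Combining with the previous step and pushing the estimate back down through $\pi$ gives $\int_{\widetilde E_{\lambda,t_n}}\exp(-s_nu_n)\,\omega^l\leq c$ with $c=\|h\|_\infty\cdot O(1)$, which is the assertion after absorbing the $O(1)$ into $c\geq 1$.

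The crux is the juggling of constants: $\eta$ must be taken large, both so that the H\"older scale condition $K(16\tau_n)^\alpha\leq(2s_n)^{-1}$ holds and so that the inclusion $(\widetilde E_\lambda)_{t_n}\subset\pi(\widehat E_{\tau_n})$ survives with the Lojasiewicz exponent $N$; then $q$ has in turn to be enlarged relative to $\eta$, $N$ and $l$ so that the bad set is too small to contain even one ball of the covering. It is exactly the gap $\lambda_1<\lambda_2$, i.e.\ the freedom — already used when $q$ was fixed via Lemma \ref{le Ho2: avec valeur L^p} — to take $q$ as large as needed, that makes both requirements simultaneously satisfiable. The other genuinely technical point is that $X$, hence $\widetilde E_\lambda$ and $\widehat E$, may be singular: one must work throughout with the intrinsic $B_X$‑balls and the metric and volume comparisons of Section \ref{sec loja} (Lemmas \ref{le vol de ball par eclat}, \ref{le estime voisinage de transforme totale}, \ref{le remont� d'estim�e par surjection}), and in particular separate in $\widehat E$ the strict transform of $\widetilde E_\lambda$ — onto which $\pi$ is generically one‑to‑one and where the induction bound transfers directly — from the exceptional divisors lying over $X_{\mathrm{Sing}}\cap\widetilde E_\lambda$, for which the same covering argument is rerun on the lower‑dimensional stratum.
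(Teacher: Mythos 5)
Your argument in the smooth (or after desingularisation, ``generic'') situation is essentially the paper's: the decomposition into ``good'' balls where Lemma~\ref{le Ho1: avec valeur centrale} applies and ``bad'' balls excluded by a Chebyshev count is exactly the content of the paper's ``$F_n$ contains no ball of radius $\rho_n^{2/\alpha}$'' step, and the bookkeeping of $\eta$ versus $N$ and $\alpha$ is carried out correctly. The difficulty, and the genuine gap in what you wrote, is the step
\[
\text{``Applying Lemma~\ref{le remont\'e d'estim\'ee par surjection} componentwise, }\|\widehat u_n\|_{L^{\tilde q}(\widehat E)}\lesssim(\lambda_1/d)^n\text{.''}
\]
Lemma~\ref{le remont\'e d'estim\'ee par surjection} for a component $\widehat E'$ of $\widehat E=\pi^{-1}(\widetilde E_\lambda)$ controls $\|\widehat u_n\|_{L^1(\widehat E')}$ by $\|u_n\|_{L^{q'}(\pi(\widehat E'))}$. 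This is fine for the components of $\widehat E$ that map \emph{onto} components of $\widetilde E_\lambda$, because there the preliminary transfer from the induction hypothesis on $E_\lambda$ gives the needed $L^{q}$ bound. But $\widehat E$ is the \emph{total} transform and also contains exceptional divisors $D$ lying over subvarieties $V=\pi(D)\subset\widetilde E_\lambda$ of strictly smaller dimension. For those you need an $L^{q}$ estimate of $u_n$ \emph{on $V$} with respect to the $\dim V$-dimensional volume, and this is neither a restriction of the estimate on $\widetilde E_\lambda$ (the volume forms have different degree) nor provided by the induction hypothesis unless $V$ happens to lie in $\mathscr B_\lambda$, which there is no reason to expect. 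Your closing remark that one should ``rerun the covering argument on the lower-dimensional stratum'' is circular at exactly this point: rerunning it requires an a priori $L^q$ bound on that stratum, which is the thing you do not have.

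This is precisely why the paper's proof of the singular case is much longer. Instead of a single covering of $\widehat E_{\tau_n}$, it sets up a descending induction on an auxiliary analytic set $A\subset E$, decomposes $\pi^{-1}(V)$ for a maximal-dimensional component $V$ of $A$ into $\widehat V_1\cup\widehat V_2$ (those mapping onto $V$ versus the rest, cf.\ Lemma~\ref{le estime voisinage de transforme totale}), and then branches: if $V\in\mathscr B_\lambda$ the induction hypothesis furnishes the $L^p$ bound needed to treat $\widehat V_1$; if $V\notin\mathscr B_\lambda$ it is \emph{not totally invariant}, so there are $Z\subset E$ and $m\ge1$ with $f^m(Z)=V$ and with $Z$ in the already-controlled region, and one pulls the exponential estimate from a tube around $Z$ to a tube around $V$ via Corollary~\ref{cor loja} and the Lipschitz bound on $f^m$, as in \eqref{eq dans Ho4}. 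This dynamical pullback through $f^m$ is the missing ingredient in your argument; without it, or without showing $\mathscr B_\lambda$ contains all images of the exceptional divisors of $\pi$ (which it need not), the $L^{\tilde q}$ estimate on $\widehat E$ cannot be obtained and the bad-ball count does not close.

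Two smaller points. First, when you reduce the integral on $\widetilde E_{\lambda,t_n}$ to one on $\widehat E_{\tau_n}$ you need the Lojasiewicz inequality in the form $\dist(\widehat z,\widehat E)^N\lesssim\dist(\pi(\widehat z),\widetilde E_\lambda)$, i.e.\ $\pi^{-1}(\widetilde E_{\lambda,t_n})\subset\widehat E_{c\,\tau_n}$, not the inclusion $(\widetilde E_\lambda)_{\tau^N}\subset\pi(\widehat E_\tau)$ that you quote; both follow from the proof of Lemma~\ref{le estime voisinage de transforme totale}, but the one you cite does not directly give the containment you use. Second, the count ``$|I_n|\lesssim\tau_n^{-2(l-1)}$'' and the lower bound ``volume $\gtrsim\tau_n^{2(l-1)}$'' should be phrased per irreducible component of $\widehat E$, with the exponent $2\dim$ of that component; the argument survives, but the exponent is not $2(l-1)$ on lower-dimensional components, and this matters when you finally tune $\tilde q$ against $\eta$ and $N$.
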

\begin{proof}
 Let $E$ be an irreducible component of $\widetilde E_{\lambda}$ of dimension $i.$ According to the choice of $q,$ we can find $\lambda'_2<\lambda_2$ such that $\lambda_1/d<(\lambda'_2/d)^{1+\epsilon}.$ Hence
\begin{equation*}
 \|u_n\|_{L^q(E)}(d/\lambda'_2)^{(1+\epsilon)n}\leq M(\lambda_1/d)^n(d/\lambda'_2)^{(1+\epsilon)n}\leq M.
\end{equation*}
and by Lemma \ref{le Ho2: avec valeur L^p} with $s=(d/\lambda'_2)^n$ we have
\begin{equation*}
\int_E\exp(-a'(d/\lambda_2')^nu_n)\omega^i\leq c',
\end{equation*}
for some constants $a',c'>0.$ Therefore, if we set $\rho_n=(\lambda_2/d)^n,$ the volume in $E$ of $F_n=\{x\in E\,|\, u_n(x)\leq -\rho_n\}$ is smaller than $c'\exp(-a'(\lambda_2/\lambda'_2)^n).$ In particular, $F_n$ contains no ball of radius $\rho_n^{2/\alpha}$ for $n$ large enough.

If $X$ is smooth then set $t_n=\rho_n^{1/\alpha}.$ As in Lemma \ref{le Ho3: geometrie des sous niveau}, for $n$ large enough, we can find a covering of $E_{t_n}$ by balls with center in $E$ and of radius $2t_n$ on which Lemma \ref{le Ho1: avec valeur centrale} holds. Hence, we get
\begin{equation*}
 \int_{E_{t_n}}\exp(-au_n/\rho_n)\omega^{l}\leq c,
\end{equation*}
for some $a,c>0.$ The same argument with $\lambda_2$ slightly smaller shows that we can choose $a=1.$ We conclude the proof by summing on all irreducible components of $\widetilde E_{\lambda}.$

When $X$ is singular, we consider a desingularization $\pi:\widehat X\to X.$ In order to establish the estimate near $E,$ we proceed inductively as follows. Assume that there exists a triplet $(A,a,\theta)$ with $a>0,$ $\theta\geq1$ and an analytic set $A\subset E$ such that 
$$\int_{E_{t^{\theta}}\setminus A_{t^{1/\theta}}}\exp(-au_n/\rho_n)\omega^l$$
is uniformly bounded in $n\geq 0$ for $t\leq \rho_n.$ Then, using the properties of the elements of $\cali B_\lambda$ and dynamical arguments, we claim that a similar estimate holds if we substitute $(A,a,\theta)$ by some $(A',a',\theta')$ with $\dim(A')<\dim(A).$ It will give the result for $\eta$ large enough after less than $l$ steps since $\dim(E)<l.$

More precisely, let $V$ be an irreducible component of $A$ with maximal dimension. We
distinguish two cases, according to whether $V$ is in $\cali B_\lambda$ or not. In the first case, we know that for all $p\geq1,$ $\|u_n\|_{L^p(V)}\lesssim (\lambda_1/d)^n.$ We set $\widehat V:=\pi^{-1}(V).$ We denote by $\widehat V_1$ the union of all components of $\widehat V$ which are mapped onto $V$ and by $\widehat V_2$ the union of the other components of $\widehat V.$ Therefore, Lemma \ref{le remonté d'estimée par surjection} implies that
$$\|\widehat u_n\|_{L^p(\widehat V_1)}\lesssim (\lambda_1/d)^n,$$
for all $p\geq1,$ where $\widehat u_n=u_n\circ\pi.$ Hence, the smooth version of the lemma implies that
$$\int_{\widehat V_{1,\rho_n^{1/\alpha}}}\exp(-a'\widehat u_n/\rho_n)\pi^*(\omega^l)$$
is uniformly bounded for $a'>0$ small enough. Moreover, by Lemma \ref{le estime voisinage de transforme totale}, there exists a constant $\theta'\geq1$ such that $\pi(\widehat V_{1,t})$ contains $V_{t^{\theta'}}\setminus V_{2,t^{1/{2}}},$ where $V_2=\pi(\widehat V_2).$ It gives the desired result near $V,$ since $\dim(V_2)<\dim(V).$

From now, we can assume that no irreducible component of $A$ with maximal dimension belong to $\cali B_\lambda$ (in particular $A\neq E$). Let $V$ denote the union of all irreducible components of $A$ with maximal dimension. In particular, these components are not totally invariant for $f_{|E},$ therefore there exist an analytic set $Z\subset E$ containing no component of $V$ and an integer $m\geq1$ such that $f^m(Z)=V.$ We set $Z'=Z\cap A.$ 
The assumption on $A$ and $\theta$ implies that if $t\leq\rho_n$ then
$$\int_{Z_{t^\theta}\setminus A_{t^{1/\theta}}}\exp(-au_n/\rho_n)\omega^l$$
is bounded uniformly on $n.$ By Corollary \ref{cor iné de Loja classique pour ens}, $Z_{t_\theta}\cap A_{t^{1/\theta}}$ is contained in $Z'_{t^{1/\theta''}}$ for some $\theta''>\theta.$ So,
$$\int_{Z_{t^{\theta''}}\setminus Z'_{t^{1/\theta''}}}\exp(-au_n/\rho_n)\omega^l$$
is bounded uniformly on $n.$ Fix a constant $B>1$ large enough. We deduce from Corollary \ref{cor loja} applied to $\mathbb P^k$ that for all $t>0$ $f^{m}(Z_{t})$ contains $V_{B^{-1}t^{d^{mk}}}.$ Moreover, since $f^m$ is Lipschitz, $f^m(Z'_t)$ is contained in $V'_{Bt},$ where $V'=f^m(Z').$ So, we have
$$f^m(Z_{t^{\theta''}}\setminus Z'_{t^{1/\theta''}})\supset V_{t^{\theta'}}\setminus V'_{t^{1/\theta'}},$$
for $t>0$ small enough and $\theta'>\theta''$ large enough. It follows that
\begin{align}\label{eq dans Ho4}
\int_{V_{t^{\theta'}}\setminus V'_{t^{1/\theta'}}}\exp(-a'u_n/\rho_n)\omega^l&\leq\int_{Z_{t^{\theta''}}\setminus Z'_{t^{1/\theta''}}}\exp(-a'\frac{u_{n+m}\lambda_2^m}{\rho_{n+m}})(f_{|X}^m)^*(\omega^l)\nonumber\\
&\lesssim \int_{Z_{t^{\theta''}}\setminus Z'_{t^{1/\theta''}}}\exp(-a'\frac{u_{n+m}\lambda_2^m}{\rho_{n+m}})\omega^l,
\end{align}
since $(f_{|X}^m)^*(\omega^l)\lesssim\omega^l.$ Moreover, for $a'$ same enough the right-hand side in \eqref{eq dans Ho4} is bounded uniformly on $n$ and $\dim(V')=\dim(Z')<\dim(V)$ since $Z$ contains no component of $V.$ This together with the estimate outside $A$ prove the claim with $A'=V'.$
\end{proof}

From now, we fix $p\geq 1$ and for $u$ in $\cali F_\lambda(C)$ denote by $\cali N(u)=\{n\geq 1\,|\, \|u_n\|_{L^p(X)}\geq (\lambda/d)^n \}$ and by $\cali N$ the union of $\cali N(u)$ for all $u.$ Our goal is to prove that $\cali N$ is finite, which will imply Theorem \ref{th la version u_n du main}. For this purpose, we have the following result.
\begin{lemma}\label{le existence de grosses boules dans K_n}
There are constants $n_1\geq 1$ and $\beta\geq1$ such that if $n$ is in $\cali N(u)$ with $n\geq n_1$ then $K_n=\{x\in X\ |\ u_n(x)\leq -(\lambda_3/d)^n\}$ contains a ball of radius $(\lambda_3/d)^{\beta n}.$
\end{lemma}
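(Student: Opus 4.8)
The plan is to deduce the statement from the geometric exponential estimate of Lemma~\ref{le Ho3: geometrie des sous niveau} by contraposition, the hypothesis $n\in\cali N(u)$ serving to exclude the exponential-integrability alternative. First I would recall that $u_n=d^{-n}u\circ f^n$ is psh modulo $T$ on $\mathbb P^k$ and satisfies $u_n\leq0$ (since $u\leq0$), so $u_n$ obeys the standing hypotheses of Section~\ref{sec ho} and Lemma~\ref{le Ho3: geometrie des sous niveau} applies to $v=u_n$. Then I would set $s_n=(d/\lambda_3)^n$, so that $s_n^{-1}=(\lambda_3/d)^n$ and the set $F_{s_n}=\{x\in X\ |\ u_n(x)\leq -s_n^{-1}\}$ is exactly $K_n$; since $\lambda_3<\lambda<d$ we have $s_n\geq2$ once $n$ exceeds some threshold. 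Let $\beta,c>0$ be the constants furnished by Lemma~\ref{le Ho3: geometrie des sous niveau}; crucially, they depend neither on $v$ nor on $s$, hence are uniform in $u\in\cali F_\lambda(C)$ and in $n$.

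Next I would argue by contradiction: assume $n\in\cali N(u)$, $n$ large, and that $K_n$ contains no ball of radius $s_n^{-\beta}=(\lambda_3/d)^{\beta n}$. Then Lemma~\ref{le Ho3: geometrie des sous niveau} yields $\int_X\exp(-s_nu_n/2)\,\omega^l\leq c$. Combining this with the elementary pointwise bound $t^p\leq (2p/(es_n))^p\,e^{s_nt/2}$ for $t\geq0$, applied to $t=-u_n\geq0$, gives
\begin{equation*}
\|u_n\|_{L^p(X)}^p=\int_X(-u_n)^p\,\omega^l\leq\Big(\frac{2p}{es_n}\Big)^p\int_X e^{-s_nu_n/2}\,\omega^l\leq\Big(\frac{2p}{es_n}\Big)^p c,
\end{equation*}
whence $\|u_n\|_{L^p(X)}\leq \tfrac{2p\,c^{1/p}}{e}\,(\lambda_3/d)^n$. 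Since $\lambda_3<\lambda$, there is $n_1\geq1$, depending only on the fixed data $p$, $c$ and the ratio $\lambda/\lambda_3$, such that for all $n\geq n_1$ the right-hand side is strictly smaller than $(\lambda/d)^n$, contradicting $n\in\cali N(u)$. Therefore, for $n\geq n_1$ with $n\in\cali N(u)$, the set $K_n$ must contain a ball of radius $(\lambda_3/d)^{\beta n}$; and if $\beta<1$ I would simply replace $\beta$ by $1$, since a ball $B_X(x,(\lambda_3/d)^{\beta n})$ contains the concentric ball $B_X(x,(\lambda_3/d)^{n})$, so the conclusion holds with an exponent $\geq1$.

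The argument is essentially bookkeeping once Lemma~\ref{le Ho3: geometrie des sous niveau} is in hand, so I do not expect a genuine obstacle here. The only point requiring care — and, in a sense, the whole purpose of the exponential estimates of Section~\ref{sec ho} — is that $\beta$, $c$, and hence $n_1$ are uniform over the non-compact family $\{u_n\}$; this is precisely what permits $\beta$ and $n_1$ in the statement to be chosen independently of $u$. The substantive work has been front-loaded into Section~\ref{sec ho}, and the remaining verification of the inequality $\tfrac{2p\,c^{1/p}}{e}(\lambda_3/d)^n<(\lambda/d)^n$ for large $n$ is immediate.
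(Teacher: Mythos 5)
Your proof is correct and follows essentially the same route as the paper: both apply Lemma~\ref{le Ho3: geometrie des sous niveau} with $s=(d/\lambda_3)^n$ in contrapositive form, combined with the elementary pointwise bound $t^p\lesssim\exp(s t/2)$ to turn the exponential estimate into an $L^p$ bound that contradicts $n\in\cali N(u)$ once $n$ is large, using $\lambda_3<\lambda$. The only cosmetic difference is that you track the explicit constant $(2p/(es))^p$ where the paper hides it in $\lesssim$; this changes nothing substantive.
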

\begin{proof}
Since $x^p\lesssim\exp(x)$ if $x\geq0,$ we deduce from the assumption on $\|u_n\|_{L^p(X)}$ that
\begin{align}\label{eq le existence de grosses boules dans K_n}
 (\lambda/\lambda_3)^n&\lesssim \left(\int_{X}(-(d/\lambda_3)^nu_n)^p\omega^{l}\right)^{1/p} \nonumber \\
&\lesssim\left(\int_{X}\exp(-(d/\lambda_3)^nu_n/2)\omega^{l}\right)^{1/p}.
\end{align}
On the other hand, let $\beta$ be the constant in Lemma \ref{le Ho3: geometrie des sous niveau}. For $n$ sufficiently large we have $(d/\lambda_3)^n\geq2.$ Hence, Lemma \ref{le Ho3: geometrie des sous niveau}  with $s=(d/\lambda_3)^n$ imply that $K_n$ has to contain a ball of radius $(\lambda_3/d)^{n\beta},$ otherwise the right-hand side of \eqref{eq le existence de grosses boules dans K_n} would be bounded uniformly on $n,$ which is a impossible since $\lambda_3<\lambda.$
\end{proof}
We can now complete the proof of the main theorem.
\begin{proof}[End of the proof of Theorem \ref{th la version u_n du main}]
 If $B\subset X$ is a Borel set then $|B|$ denotes its volume with respect to the measure $\omega^l.$ As we have already seen, the volume of a ball of radius $r$ in $X$ is larger than $c'r^{2l},$ $0<c'\leq1.$ Therefore, observe that if $x$ is in $\widetilde E_{\lambda,t_n/2}$ then $|\widetilde E_{\lambda,t_n}\cap B_{X}(x,r)|=|B_{X}(x,r)|\geq c'(r/2)^{2l}$ for $r<t_n/2.$

From now, assume in order to obtain a contradiction that $\cali N$ is infinite. Consider $u\in\cali F_\lambda(C)$ and $n\in\cali N(u)$ large enough.
Fix also $\beta$ large enough. So, we have $(\lambda_3/d)^{\beta n}<t_n/4$ and 
\begin{equation}\label{eq n assez grand pour avoir de bon rayon}
 c\exp(-(\lambda_3/\lambda_2)^n)\leq c'(A^\delta (t_n/2)^{N\delta}(\lambda_3/d)^{\beta n}/2)^{2l},
\end{equation}
where $c$ is defined in Lemma \ref{le Ho4: dans vois avec valeur L^p centrale}. Let $r_0=(\lambda_3/d)^{\beta n},$ and for $1\leq i\leq n$ let $r_i=A(t_n/2)^{N}r_{i-1}^\delta.$ We will prove by induction that for $0\leq i\leq n,$ $f^i(K_n)=\{x\in X\,|\, u_{n-i}(x)\leq -\lambda_3^n/d^{n-i}\}$ contains a ball $B_i$ of radius $r_i.$

Since $\beta$ is large, Lemma \ref{le existence de grosses boules dans K_n} implies that the assertion is true for $i=0.$ Let $0\leq i\leq n-1$ and assume the property is true for $i.$ We deduce from Lemma \ref{le Ho4: dans vois avec valeur L^p centrale} that
\begin{equation*}
 \int_{\widetilde E_{\lambda,t_{n-i}}}\exp(-(d/\lambda_2)^{n-i}u_{n-i})\omega^{l}\leq c,
\end{equation*}
and in particular
\begin{equation*}
 |\widetilde E_{\lambda,t_n}\cap B_i|\leq |\widetilde E_{\lambda,t_{n-i}}\cap f^i(K_n)|< c\exp(-(\lambda_3/\lambda_2)^n\lambda_2^i),
\end{equation*}
since $t_n\leq t_{n-i}.$ This and \eqref{eq n assez grand pour avoir de bon rayon} imply that
$$|B_i|\geq c'r_i^{2l}>2^{2l}|\widetilde E_{\lambda,t_n}\cap B_i|,$$
since $r_i\geq (A^\delta (t_n/2)^{N\delta}r_0)^{\delta^i}$ and $\delta<\lambda_2.$ Consequently, the center of $B_i$ is not in $\widetilde E_{\lambda,t_n/2}$ and by Corollary \ref{cor loja}, $f(B_i)\subset f^{i+1}(K_n)$ contains a ball $B_{i+1}$ of radius $r_{i+1}=A(t_n/2)^{N}r_i^\delta.$ Note that we already reduced the problem to the case where the constant $b$ in Corollary \ref{cor loja} is equal to $1.$ 

Therefore, for all $n$ in $\cali N(u)$ sufficiently large, the volume of $f^n(K_n)=\{x\in X\,|\, u(x)\leq -\lambda_3^n\}$ is greater than $D^{n\delta^n},$ with $0<D<1$ independent of $u$ and $n.$ This contradicts the inequality $\delta<\lambda_3.$ Indeed, since $\cali F_\lambda(C)$ is bounded in $L^q(X),$ by Lemma \ref{le Ho2: avec valeur L^p} there exists $a'>0$ such that
$$\int_{X}\exp(-a'u)\omega^{l}$$
is uniformly bounded for $u$ in $\cali F_\lambda(C)$

Hence, $\cali N$ is finite and in particular bounded by some $n_2\geq 1.$ We conclude using the fact that the restriction of $\cup_{n=0}^{n_2}d^{-n}(f^n)^*(\cali F_\lambda(C))$ to $X$ is a relatively compact family of wpsh modulo $T$ functions and then bounded in $L^p(X).$ Therefore, we have
\begin{equation*}
 \|u_n\|_{L^p(X)}\lesssim\left(\frac{\lambda}{d}\right)^n,
\end{equation*}
if $n\leq n_2$ and thus for every $n\geq 0$ by the definition of $\cali N.$
\end{proof}

\noindent
J. Taflin, UPMC Univ Paris 06, UMR 7586, Institut de
Math{\'e}matiques de Jussieu, F-75005 Paris, France. {\tt  taflin@math.jussieu.fr} 
\end{document}